\theoremstyle{plain}
\newtheorem{lemma}{Lemma}[section]
\newtheorem{theorem}[lemma]{Theorem}
\newtheorem{proposition}[lemma]{Proposition}
\newtheorem{corollary}[lemma]{Corollary}
\theoremstyle{definition}
\newtheorem{definition}[lemma]{Definition}
\newtheorem{remark}[lemma]{Remark}
\newtheorem{example}[lemma]{Example}
\numberwithin{equation}{section}
\let\temp\phi%
\let\phi\varphi%
\let\temp\epsilon%
\let\epsilon\varepsilon%
\let\varepsilon\temp%
\newcommand{\R}{\mathbb{R}}
\newcommand{\N}{\mathbb{N}}
\DeclareMathOperator{\Dom}{Dom}
\DeclareMathOperator{\Ent}{Ent}
\DeclareMathOperator{\U}{U}
\DeclareMathOperator{\Graph}{Graph}
\DeclareMathOperator{\OptCaus}{OptCaus}
\DeclareMathOperator{\OptGeo}{OptGeo}
\DeclareMathOperator{\Caus}{Caus}
\DeclareMathOperator{\Ric}{Ric}
\DeclareMathOperator{\dist}{dist}
\DeclareMathOperator{\dom}{Dom}
\DeclareMathOperator{\id}{Id}
\DeclareMathOperator{\supp}{supp}
\DeclareMathOperator{\vol}{Vol}
\newcommand{\de}{{\mathrm{d}}}
\newcommand{\CD}{\mathsf{CD}}
\newcommand{\MCP}{\mathsf{MCP}}
\newcommand{\NC}{\mathsf{NC}}
\newcommand{\ve}{\varepsilon}
\newcommand{\weak}{\rightharpoonup}
\newcommand{\Prob}{\mathcal{P}}
\newcommand{\M}{\mathcal{M}}
\newcommand{\norm}[1]{\left\Vert#1\right\Vert}
\renewcommand{\L}{\mathcal{L}}
\newcommand{\q}{\mathfrak{q}}
\newcommand{\TMCP}{\mathsf{TMCP}}
\newcommand{\TCD}{\mathsf{TCD}}
\newcommand{\QQ}{\mathfrak{Q}}
\newcommand{\gflow}{\Psi}
\newcommand{\mm}{\mathfrak{m}}
\newcommand{\qq}{\mathfrak{q}}
\newcommand{\relation}{\mathcal{R}}
\newcommand{\sfd}{\mathsf{d}}
\newcommand{\vfield}{\mathfrak{X}}
\newcommand{\normal}{\mathscr{N}}
\newcommand{\staticWithoutH}{\mathfrak{s}}
\newcommand{\staticWithH}{\staticWithoutH(H)}
\newcommand{\static}{\staticWithoutH}
\newcommand{\initialWithoutH}{\mathfrak{a}}
\newcommand{\initialWithH}{\initialWithoutH(H)}
\newcommand{\initial}{\initialWithoutH}
\newcommand{\finalWithoutH}{\mathfrak{b}}
\newcommand{\finalWithH}{\finalWithoutH(H)}
\newcommand{\final}{\finalWithoutH}
\newcommand{\NEC}{Null Energy Condition}
\newcommand{\ascar}{tight}
\setlist[enumerate]{leftmargin=1cm}
\setlist[itemize]{leftmargin=1cm}
\newcommand{\noending}[1]{{#1}^\circ}
\title{On the geometry of synthetic null hypersurfaces}
\author{Fabio Cavalletti}
\email{fabio.cavalletti@unimi.it}
\author{Davide Manini}
\email{dmanini@campus.technion.ac.il}
\author{Andrea Mondino}
\email{andrea.mondino@maths.ox.ac.uk}
\begin{document}

\begin{abstract}
This paper develops a synthetic framework for the geometric and analytic study of null (lightlike) hypersurfaces in non-smooth spacetimes. Drawing from optimal transport and recent advances in Lorentzian geometry and causality theory, we define a \emph{synthetic null hypersurface} as a triple \((H, G, \mathfrak{m})\): \(H\) is a closed achronal set in a topological causal space, \(G\) is a gauge function encoding affine parametrizations along null generators, and \(\mathfrak{m}\) is a Radon measure serving as a synthetic analog of the rigged measure. This generalizes classical differential geometric structures to potentially singular spacetimes.

The central object is the \emph{synthetic null energy condition} (\(\mathsf{NC}^e(N)\)), defined via the concavity of an entropy power functional along optimal transport, with parametrization given by the gauge $G$. This condition is invariant under changes of gauge and measure within natural equivalence classes. It agrees with the classical Null Energy Condition in the smooth setting and it applies to low-regularity spacetimes. A key property of the \(\mathsf{NC}^e(N)\) condition is the stability under convergence of synthetic null hypersurfaces.

The \(\mathsf{NC}^e(N)\) condition is also remarkably fruitful for applications. First, it provides a framework for a synthetic version of Hawking's area theorem. Second, the celebrated Penrose's singularity theorem is proved for continuous spacetimes, and the existence of trapped regions is settled in the general setting of topological causal spaces satisfying the  \(\mathsf{NC}^e(N)\).
\end{abstract}

\subjclass{53C50; 83C75, 49Q22, 53C23}
\keywords{null energy condition, null hypersurface,
  optimal transport, degenerate cost, Hawking area theorem,
  Penrose singularity theorem}

\maketitle
\vspace{-1.3 cm}
\tableofcontents

\section{Introduction}

Null (or lightlike) hypersurfaces are fundamental objects in
Lorentzian geometry and mathematical relativity, playing a central
role in the causal structure of spacetimes and in the formulation of
physically significant results such as  Hawking's
area
theorem~\cite{Haw71} and Penrose's singularity
theorem~\cite{Penrose65}. Traditionally, the analysis of such
hypersurfaces---defined as co-dimension one submanifolds whose normal
vector field is null---relies on the smooth differential-geometric
setting. However, the increasing interest in low-regularity
spacetimes, including those with continuous metrics or metric-measure
structures, necessitates the development of a robust synthetic
framework capable of encoding the geometric and causal features of
null hypersurfaces beyond smooth regimes.

The  goal of this paper is to introduce and develop a synthetic theory of null hypersurfaces and the \NEC\ (NEC) in non-smooth spacetimes, within the general setting of topological causal spaces~\cite{CausalSpace, KS}. The main outcomes are:
\begin{itemize}
\item The \emph{well-posedness} and \emph{covariance/invariance} of
  the synthetic notions;
\item The \emph{compatibility} of the synthetic constructions of the present paper with the classical smooth notions;
\item The \emph{stability} of the synthetic NEC under a suitable
  extension of the measured Gromov{--}Hausdorff
  convergence to synthetic null hypersurfaces;
\item  \emph{Extensions} of  certain classical results,
  namely
  Hawking's area theorem and Penrose's singularity theorem to settings
  of extremely low regularity (the former to topological causal spaces, the latter to continuous spacetimes).
\end{itemize}

\subsection{Motivations}
\subsubsection*{Null hypersurfaces}
 Often, the null hypersurfaces appearing in applications (e.g., as Cauchy horizons)  lack smoothness. This motivates the development of a theory for null hypersurfaces under minimal regularity assumptions.
 Recall that a hypersurface \( H \subset (M,g) \), where \( (M,g) \)
 is a Lorentzian manifold, is said to be \emph{null} if the
restriction of the Lorentzian metric \( g \) to \( H \) is degenerate i.e., it has a one-dimensional kernel at every point.
Equivalently, the normal vector field to \(H \) is also tangent to \(H \). Null hypersurfaces naturally arise in general relativity: they encode the propagation of light-like signals and gravitational radiation, they serve as geometric models for event horizons, Cauchy horizons, and null infinity. 
A deeper understanding of their geometry may shed new light on fundamental problems in general relativity, for instance see the progress on the classification of Cauchy horizons~\cite{MoncIsenCMP1983, BustReiGRG21, GuMinCMP2022}. 
Furthermore, null hypersurfaces feature prominently in theoretical frameworks such as holography and the AdS/CFT correspondence, where boundary structures often possess null character.

\subsubsection*{\NEC}
The \emph{\NEC\ (NEC)}, in its geometric form, requires that the Ricci tensor  satisfies
\begin{equation}\label{eq:NECIntro}
  \Ric(v,v) \geq 0, \quad
  \text{for all null\;
    (i.e., light-like)
    vectors \( v \).}
\end{equation}
 This condition encapsulates the classical expectation that the
 effective energy density measured along lightlike directions is
 nonnegative.
 The NEC arises as a central assumption in several foundational
 results in Lorentzian geometry and general relativity. In the
 classical (non-quantum) regime, it is expected to hold for all
 physically reasonable matter models (cf.~\cite[Ch.~4.6]{Carroll}).
 Its mathematical significance is underscored by the following applications:

\begin{enumerate}
 \item \emph{Black Hole Area Theorem}: The NEC underpins the proof of Hawking's area theorem~\cite{Haw71}, which asserts the non-decreasing nature of the event horizon area of a black hole in classical general relativity. This result serves as a geometric analog of the second law of thermodynamics. 
    \item \emph{Penrose's Singularity Theorem}: The NEC is a key
      hypothesis in Penrose's singularity theorem~\cite{Penrose65},
      which guarantees geodesic incompleteness under conditions of
      gravitational collapse and the existence of trapped
      surfaces. This theorem represents a foundational result in the
      mathematical theory of black holes, recognized by the
      2020 Nobel Prize in Physics.
\end{enumerate}

\subsubsection*{Singular spacetimes}
As already anticipated in the foundational work of Lichnerowicz in the 1950s~\cite{Lichn}, many physically realistic models predict spacetimes with matter fields that yield energy-momentum tensors of low regularity. Through the Einstein equations, this leads to Lorentzian metrics of regularity below \( C^2 \). Notable examples include matched spacetimes modeling stellar interiors and exteriors~\cite{MaSe}, self-gravitating compressible fluids~\cite{BuLe}, and shock wave solutions. Certain models require even lower regularity, such as impulsive gravitational waves (see Penrose's original construction~\cite{PenGW}, and the more recent~\cite[Ch.~20]{GrPo}), and cyclic cosmologies~\cite{LLV}.

Ultimately, a long-term motivation for considering Lorentzian geometries with weak regularity stems from the expectation that at quantum gravitational scales---such as near black-hole singularities or the origin of the universe---the classical manifold model of spacetime may break down. In these regimes, spacetime may exhibit extreme irregularity, potentially eluding approximation by smooth structures.

In the setting of Lorentzian metrics of low regularity, curvature is
typically defined in the sense of distributions, leveraging the smooth
background manifold. This approach has been developed, for instance,
by Geroch and Traschen~\cite{GeTr}, allowing the definition of
distributional curvature tensors for \( W^{1,2}_{\text{loc}} \)
Lorentzian metrics satisfying an appropriate non-degeneracy
condition---automatically satisfied,  e.g., for $C^1$-metrics (cf.~\cite{Graf}). 

A key objective in this paper is to address curvature---not only in the sense of generalized tensors on a smooth underlying manifold, but in settings where \emph{the spacetime itself is singular}, and neither the metric tensor nor the ambient differentiable structure is assumed to be smooth.

\subsubsection*{Why extending singularity theorems to non-smooth spacetimes}

The singularity theorems constitute a cornerstone of Lorentzian geometry and general relativity, rigorously demonstrating that spacetime singularities---understood as causal geodesic incompleteness---arise generically under physically reasonable conditions. Classic results such as those of Penrose~\cite{Penrose65} and Hawking~\cite{Haw:67} establish incompleteness in scenarios involving complete gravitational collapse or cosmological expansion, respectively. These foundational theorems, developed in the late 1960s, were originally formulated under the assumption that the spacetime metric \( g \) is smooth.

However, already in their seminal monograph~\cite{HawEll}, Hawking and Ellis raised concerns about the regularity assumptions inherent in these theorems. In particular, they emphasized that a lack of low-regularity generalizations would limit the physical robustness of the conclusions: if singularity theorems cease to hold for metrics below the \( C^2 \)-threshold, then spacetime incompleteness---hence singularity in the standard sense (cf.~\cite[p.~10]{Cla:98},~\cite[Sec.~8.1]{HawEll})---could potentially be avoided by a mere drop in regularity. For instance, a \( C^{1,1} \)-metric would imply, via the Einstein equations, at most a finite discontinuity in the matter fields, which is hardly pathological from a physical standpoint.

While regularity issues surrounding the singularity theorems have been noted for decades (cf.~\cite[Sec.~6.2]{Sen:98}), substantial progress has only recently been achieved. On the one hand, significant developments in Lorentzian causality theory have extended the framework to encompass continuous metrics~\cite{Chr-Grant, SaC0}, and even more abstract settings~\cite{KS, Min}. On the other hand, advances in analytic techniques---especially convolution-based regularization---have led to rigorous proofs of Penrose's singularity theorem for $C^1$ metrics~\cite{Graf} and of Hawking's singularity theorem for \( C^{0,1} \) metrics~\cite{CGHKS-25}.

The present work pushes this frontier decisively further by establishing a version of Penrose's singularity theorem for continuous metrics, thereby encompassing a broad spectrum of physically relevant, non-smooth spacetimes.

Let us mention that Hawking's singularity theorem was extended by the first and third author~\cite{CaMo:20} to the setting of Lorentzian length-spaces satisfying timelike Ricci lower bounds in a synthetic sense; see also the more recent extension by Braun{--}McCann~\cite{Braun-McCann} to synthetic variable lower bounds on the timelike Ricci. However, as detailed below, the null setting of Penrose's singularity theorem  poses considerable additional challenges, that are addressed in the present work.

\subsection{Previous contributions on NEC and optimal trasport}

A first synthetic characterization of the NEC for smooth spacetimes was obtained by McCann~\cite{McCann-NEC}, as a limiting procedure of timelike Ricci lower bounds. This approach builds on  top of the optimal transport characterization of timelike Ricci lower bounds obtained for smooth spacetimes in~\cite{McCann, MoSu} and the synthetic $\mathsf{TCD}(K,N)$ condition  studied in~\cite{CaMo:20} (see also~\cite{Braun, CM24-IsopLor, Braun-McCann, Octet} for more recent developments) in the framework of Lorentzian length spaces introduced by Kunzinger{--}S\"amann~\cite{KS}.  The advantage of the characterization of the NEC obtained in~\cite{McCann-NEC} is that it extends to non-smooth Lorentzian length spaces however, as already remarked in~\cite{McCann-NEC},  it is not stable under convergence. 

A distinct approach was later proposed by Ketterer~\cite{Ket24}, who first studied optimal transport inside null hypersurfaces and considered displacement convexity of entropy for \emph{singular} measures supported on co-dimension two spacelike submanifolds embedded in null hypersurfaces.

After~\cite{Ket24}, the authors~\cite{CMM24a} established
that~\eqref{eq:NECIntro} can be equivalently reformulated in terms
of entropy convexity for \emph{diffused} probability measures
supported on null hypersurfaces.
More precisely,~\cite{CMM24a} provided a systematic study, in smooth spacetimes,  of the interplay between Ricci curvature in null directions, rigged measures on null hypersurfaces, and optimal transport of probability measures on null hypersurfaces which are absolutely continuous with respect to a rigged measure.

\subsection{A novel theory for synthetic null
    hypersurfaces}

In the present work, we use the smooth characterization of the NEC in terms of optimal transport provided in~\cite{CMM24a} as a starting point to develop a synthetic theory of  null hypersurfaces and of the \NEC\ in non-smooth spacetimes.

\subsubsection*{Synthetic null hypersurfaces}

The building blocks of the novel synthetic framework for null hypersurfaces are:
\begin{itemize}
\item a closed achronal set \( H \) within a topological causal space \( (X, \ll, \leq, \mathcal{T}) \),
\item a gauge function \( G : H^\circ \to \mathbb{R} \) encoding affine parametrization along causal curves (where $ H^\circ\subset H$ denotes the subset of intermediate points of the null generators of $H$, see Definition~\ref{D:maxmin}), 
\item a reference measure \( \mathfrak{m} \in \mathcal{M}_+(H) \) satisfying \( \mathfrak{m}(\static(H)) = 0 \), where \( \static(H) \) denotes the static (non-causally connected) part of \( H \).
\end{itemize}
The triple \( (H, G, \mathfrak{m}) \) defines what we call a \emph{synthetic null hypersurface}. This notion is shown to be compatible with classical null hypersurfaces in smooth Lorentzian manifolds, as well as with achronal boundaries arising as \( \partial I^+(A) \) for a compact, achronal, space-like, $C^2$-submanifold  \( A \subset M \) of codimension $k\geq 2$ (see Section~\ref{SS:CompatSyntNullHypers}).

The lack of a variational characterization for
\emph{null} geodesics  (contrary to \emph{timelike} geodesics) poses serious challenges in developing a synthetic theory of null hypersurfaces and of the \NEC\ in a non-smooth setting. Indeed,  in smooth spacetimes, null geodesics are uniquely determined
(up to affine reparametrizations) by the solution of the geodesic
equation.
In contrast, in the non-smooth setting, causal curves may admit wild
reparametrizations---such
as those involving non absolutely continuous functions like the Vitali function---that preserve causal behavior without reflecting genuine geodesicity. To overcome this, we define \emph{\( G \)-causal curves} as those for which the composition with the gauge function \( G \) is affine, a choice that restores the synthetic analog of affine parametrization and allows for a meaningful definition of null geodesics.

A similar ambiguity appears for the reference measure: in the smooth
setting, the rigged measure $\mm_L$
concentrated
on a null hypersurface is not unique and it depends on the choice of a
null  geodesic vector field $L$, whose definition requires the differential
structure of the manifold.
From a more synthetic point of view,  different rigged measures can be obtained via  multiplication by a transverse
function. The synthetic counterpart of the rigged measure $\mm_L$ is given by the measure \( \mathfrak{m} \in \mathcal{M}_+(H) \) which is part of the data defining a synthetic null hypersurface \( (H, G, \mathfrak{m}) \).
In order to show well-posedness of the theory,  we establish its covariance under multiplication of the reference measure $\mm$ by  transverse
functions (see \Cref{Sec:CovarianceGaugeMeas}).

\subsubsection*{Synthetic \NEC}

The cornerstone of our theory is the synthetic \NEC, denoted by \(\NC^{e}(N)\), where \( N > 0 \) plays the role of a synthetic upper bound on the dimension of the spacetime. This condition is formulated as the concavity of the functional
\begin{equation}\label{eq:defUNmIntro}
  \U_{N-1}(\mu|\mm)
  : =
  \exp\left(-\frac{\Ent(\mu|\mm)}{N-1} \right), 
\end{equation}
 where \( \operatorname{Ent}(\mu|\mathfrak{m}) \) denotes the Boltzmann–Shannon entropy of \( \mu \) relative to \( \mathfrak{m} \).
 The functional $\U_{N}$ is well-known
in information theory as the \emph{Shannon entropy power} (see, e.g.,~\cite{DCT-1991}; for instance, the entropy
power on $\R^N$ is the functional $\U_{N/2}$), 
and it was studied in connection to Ricci curvature bounds in~\cite{EKS}. 

The synthetic  \(\NC^{e}(N)\) condition requires that for any pair of probability measures \( \mu_0, \mu_1 \in \mathcal{P}(H) \) admitting a causal coupling, there exists a \( G \)-causal dynamical plan \( \nu \in \operatorname{OptGeo}_G(\mu_0, \mu_1) \) such that \( \U_{N-1}(\mu_t|\mathfrak{m}) \) is concave in \( t \), where \( \mu_t := (e_t)_\sharp \nu \) is the Wasserstein geodesic connecting $\mu_0$ to $\mu_1$ via the law $\nu$. 

This condition is inspired by the convexity-based characterizations of lower Ricci bounds for Riemannian manifolds obtained in~\cite{McCann, OttoVillani, CEMS, vRS} which led to the celebrated $\CD(K,N)$ spaces of Lott{--}Sturm{--}Villani~\cite{sturm:I, sturm:II, lottvillani}.

The synthetic framework developed in the present work
exhibits important stability and compatibility properties. Notably, we
show that the \(\NC^e(N)\) condition is independent of the specific
choice of gauge and reference measure within appropriate equivalence
classes, see \Cref{T:invariance-NC-locally-compact},
 thereby extending to the
synthetic setting  a property previously established in the smooth realm 
(see~\cite{CMM24a}, in particular Prop.~7.1.).
Furthermore, under additional geometric assumptions such as null
non-branching or tightness of the gauge, we prove that the sets of initial
and final points \(\initial(H) \), \( \final(H) \) of the null
generators of $H$ are negligible with respect to \( \mathfrak{m} \), a
property crucial for the applications, see \Cref{T:initialpoints} and
\Cref{P:extrimal-negligible}, respectively.

In addition to foundational results, we demonstrate that the synthetic
theory recovers the classical NEC in smooth
settings. Theorems~\ref{thm:compatibilityNEC-NCe}
and~\ref{T:compatibility-penrose} establish that if the classical
NEC~\eqref{eq:NECIntro} holds, then the associated synthetic null
hypersurface satisfies the synthetic \(\NC^e(N)\) condition.
This compatibility is shown both in the presence of global
cross-sections and for achronal boundaries of the form \( \partial I^+(A) \), even in the absence of smooth global coordinates.

%Next, we discuss some applications. In addition to those,  we expect the development presented in this work to open the door for extending causal and geometric analysis to continuous and metric-measure spacetimes, laying the groundwork for a synthetic theory of Lorentzian geometry that parallels the theory of Lott-Sturm-Villani  \(\mathrm{CD}(K,N)\) metric measure spaces.

In positive signature, a key property of the Lott{--}Sturm{--}Villani
$\CD(K,N)$ condition is the stability under measured Gromov{--}Hausdorff
convergence of metric measure spaces~\cite{sturm:I, sturm:II,
  lottvillani} (see also~\cite{villani:oldandnew}).
In \Cref{Sec:StabNCe} we establish  two stability theorems for the \NEC\ under a null counterpart of the  measured Gromov{--}Hausdorff  convergence for synthetic null hypersurfaces, see \Cref{thm:StabNCProb} and \Cref{thm:StabNCePointed} for the precise statements.

\subsubsection*{Optimal transport along synthetic null hypersurfaces}
In \Cref{Sec:OT-Loc}, we establish existence and uniqueness results for the optimal transport problem along a synthetic null hypersurface  $H$. These results enable us to localize the synthetic Ricci curvature lower bounds to the one-dimensional null generators of $H$  (see \Cref{T:localization}).

Such a locatization paradigm has a long history, with roots in convex
geometry~\cite{PW:1960,GM:1987,KLS:1995}.
It has been later revisited with tools of
Optimal Transport in spaces with lower bounds on the Ricci curvature,
both smooth~\cite{klartag,Ohta:2018} and non-smooth~\cite{CM1}.
More recently, it has been developed
for Lorentzian pre-length spaces satisfying synthetic \emph{timelike} Ricci curvature lower bounds expressed by the  $\TMCP(K,N)$ condition~\cite{CaMo:20} or the $\TCD(K,N)$ condition~\cite{CM24-IsopLor}, as well as \emph{variable} synthetic  timelike Ricci curvature lower bounds~\cite{Braun-McCann}.

\Cref{T:localization} extends such a powerful toolkit to the synthetic
\emph{null} setting.
These technical tools play a central role in formulating and proving synthetic analogues of Hawking’s area theorem and Penrose’s singularity theorem (see below).

\subsection{Applications}
\subsubsection*{A synthetic version of the Hawking's Area Theorem}

Hawking proved the area theorem in 1971~\cite{Haw71}. In its original formulation, it states that in a smooth spacetime satisfying the \NEC, the area of a smooth black hole horizon can never decrease.

In classical general relativity, this result played a fundamental role in the development of black hole thermodynamics. The area of a black hole horizon is interpreted as a measure of its entropy, and according to the second law of thermodynamics, entropy cannot decrease over time.

The theorem was later revisited by Chru{\'s}ciel, Delay, Galloway, and Howard~\cite{CDGH-AHP-2001}, as well as Minguzzi~\cite{MinguzziCMP15}, who extended it by relaxing the regularity assumptions on the null hypersurface while maintaining the assumption of a smooth ambient spacetime.

More recently, Ketterer~\cite{Ket24} and---using a different approach more closely aligned with the present work---the authors~\cite{CMM24a}, extended Hawking's Area Theorem to smooth weighted spacetimes satisfying the \NEC, employing tools from optimal transport theory.

\Cref{eq:HawSynt} generalizes Hawking's area theorem to the non-smooth setting of a synthetic null hypersurface $(H,G,\mm)$ contained in a topological causal space, satisfying the $\NC^e(N)$ condition.

\subsubsection*{Penrose's Singularity Theorem for continuous spacetimes}
In \Cref{S:Penrose}, we propose a way to extend Penrose's singularity
theorem to the setting of a continuous spacetime (i.e., when the
Lorentzian metric is continuous).
Recall that Penrose's singularity theorem~\cite{Penrose65} states that,
if a spacetime satisfying the \NEC\ admits a non-compact Cauchy
hypersurface and it contains a future trapped closed surface, then it is null
geodesic incomplete; i.e., there exists a null geodesic $\gamma$ (in
the sense that it solves the geodesics-{ODE}:
$\nabla_{\dot{\gamma}}\dot{\gamma}$) whose maximal domain of
definition is strictly contained in $\R$.
%\smallskip

Besides the proof, a first challenge in generalizing such a result for
spacetimes of low regularity
(i.e., when the Lorentzian metric is of regularity lower than $C^2$)
is to make sense of the very statement.
Penrose's singularity theorem was extended by
Kunzinger{--}Steinbauer{--}Vickers~\cite{KSV-CQG} to $C^{1,1}$ spacetimes
and by Graf~\cite{Graf} to $C^{1}$ spacetimes (see
also~\cite{Aazami_2016, Minguzzi-CQG-2015, Lu-Minguzzi-Ohta_2019} for
extensions to Finsler spacetimes).
In such a regularity, the \NEC\ is understood in distributional sense
and all the other ingredients in the statement (i.e., existence of a
non-compact Cauchy hypersurface, null geodesic completeness, and existence
of a trapped surface) can be phrased, with proper care, as in the smooth
setting; the proof is then performed via a clever approximation
argument by smooth Lorentian metrics.

In the setting of  $C^0$
spacetimes 
(and, more generally, of closed cone structures),
Minguzzi~\cite[Th.~2.67]{Min} was able to extend part of the proof
of Penrose's incompleteness theorem.
Namely, he proved that there are no future trapped sets (recall the classical terminology that a compact achronal set $S$ is future trapped if $\partial I^+(S)$ is compact) 
in a spacetime admitting a stable non-compact Cauchy surface (see
also~\cite[Th.~4.9]{Ling}, where $C^0$ spacetimes are considered and
the stability assumption is dropped).
Having in mind the proof of Penrose's singularity theorem,
a natural question is then to find conditions ensuring the
existence of a trapped set in low-regularity
spacetimes.
This is where, the NEC, the geodesic
completeness, and the future trapped set play a key role.

When the spacetime is merely continuous, the \NEC\ cannot be phrased in a distributional sense (this would require the Lorentzian metric $g$ to be in the Geroch{--}Traschen class, i.e., $g\in L^{\infty}_{loc}\cap W^{1,2}_{loc}, \, g^{-1}\in L^{\infty}_{loc}$). We will thus employ the synthetic $\NC^e(N)$ condition discussed above.  Also the notions of trapped surface and of geodesic completeness present challenges, as both involve the Christoffel symbols and thus first derivatives of the Lorentzian metric. Thus, ignoring for the moment the technical difficulties involved in the proof, a basic question is how to even phrase the statement of Penrose's singularity theorem for a $C^0$-spacetime.  
We propose an
answer,  building on the synthetic optimal transport tools developed in the present work.
\begin{itemize}
\item \emph{A synthetic notion of null completeness}.
In the smooth setting, it was pointed out that the incomplete 
geodesic produced by Penrose's singularity theorem is
achronal.
Thus, negating the existence of such a geodesic means to 
require that all maximally-defined geodesics
are either complete or they cease to be achronal
(or, equivalently, they cease to be maximizing). 
In \Cref{SS:WNC} we will take this observation as definition 
of weak null completeness (see \Cref{defn:wgnc}).
  A null-geodesically complete manifold
  trivially satisfies
  such a property;  \Cref{Prop:ExNotComplete} exhibits an example of a
  weakly null complete Lorentzian manifold that is not null complete,
  justifying the terminology.
  The advantage of the weak null completeness is that it can be rephrased in terms of gauges. Indeed, \Cref{prop:wgnc} shows that for a strongly causal Lorentzian manifold $(M,g)$, the weak null completeness is equivalent to the existence of a (natural) \emph{proper} gauge. Thus, the properness of the gauge shall be seen as a synthetic counterpart of the weak null completeness.

\item  \emph{A synthetic notion of future convergence}.
  The classical  definition of future convergence
  for an achronal 2-codimensional submanifold $S$
  states that the mean
  curvature of $S$ on
  both (incoming and outgoing)
  sides of $\partial
  I^+(S)$ to be strictly negative.
  This notion is meaningful in the
    smooth setting.
  \Cref{def:future-converging} provides a synthetic formulation,
  called \emph{$(G,\mm)$-future convergence}.
  The building blocks of such a definition are the causal relation, the gauge $G$,
  and the measure $\mm$.
  The underlying idea is that the mean curvature is the first variation of area, and the area is in turn a first variation of volume.
\end{itemize}
With the above notions, we prove:

\begin{itemize}
\item \emph{A Penrose's theorem in a synthetic setting}, \Cref{T:penrose}. Roughly, it states that if an achronal set $S$ is $(G,\mm)$-future converging and $H=\partial I^+(S)$ is a synthetic null 
hypersurface satisfying the $\NC^e(N)$ condition for some $N> 2$, then  $H$ is compact; i.e., $S$ is future trapped.
\item \emph{A Penrose's singularity theorem in $C^0$-spacetimes},
\Cref{Cor:PenroseC0}.\\ Roughly, it states that if $(M,g)$ is a continuous spacetime  admitting a non-compact Cauchy surface  and containing a compact achronal set $S$ which is $(G,\mm)$-future converging and such that $H=\partial I^+(S)$ satisfies the
$\NC^e(N)$ condition, for some $N>2$, then $(M,g)$ is not weakly null
complete.
\end{itemize}

%\medskip
{\footnotesize
\par
\noindent
\textbf{Acknowledgments.}
The authors wish to thank E.\ Minguzzi, for useful 
comments on a preliminary version of this manuscript.

D.\,M.\;acknowledges support from the European Research Council (ERC)
under the European Union's Horizon 2020 research and innovation
programme, Grant Agreement No.~101001677 ``ISOPERIMETRY''.

A.\,M.\;acknowledges support from the European Research Council (ERC) under the European Union's Horizon 2020 research and innovation programme, Grant Agreement No.\;802689 ``CURVATURE''.

Part of this research was carried out at the Hausdorff Institute of Mathematics in Bonn, during the trimester program  ``Metric Analysis''. The authors wish to express their appreciation to the  institution for the stimulating atmosphere and the excellent working conditions. 

\par
\noindent
\textbf{License.}
\doclicenseText%
%\hfill\doclicenseImage
}

\section{Basics on causal spaces}\label{Subsec:BasicsLorentzianSpaces}

In this section, we briefly recall some basic notions and results from
the theory of causal spaces.
We give the definition of causal space,  following the convention of~\cite{KS}.

\begin{definition}[{Causal space  $(X,\ll,\leq)$~(\cite[Def.~2.11]{KS})}]
A \emph{causal space}  $(X,\ll,\leq)$ is a set $X$ endowed with a preorder $\leq$ and a transitive relation $\ll$ contained in $\leq$.
\end{definition}

Let us point out that this definition differs from the one present in
the paper by Kronheimer and Penrose~\cite{CausalSpace}, where the
notion of causal space was first introduced.
For the reader convenience, we compare in the table below the
two definitions. 

\begin{table}[ht]
  \newlength{\columnone}
  \setlength{\columnone}{1.65cm}
  \newlength{\columntwo}
  \setlength{\columntwo}{3.45cm}
  \newlength{\lastcolumn}
  \setlength{\lastcolumn}{\the\textwidth-7cm}
  \setlength{\lastcolumn}{\the\textwidth-\the\columnone-\the\columntwo-1.1cm}
  \centering
  \begin{tabular}{|p{\the\columnone}|p{\the\columntwo}|p{\the\lastcolumn}|}
    \hline
    &
      Causality\par\noindent (antisymmetry of $\leq$)
    &
      Push-up property (see~\Cref{def:push-up} for the push-up property)
    \\
    \hline
    K.{--}P.~\cite{CausalSpace}
    &
      Assumed
    &
      Assumed
    \\
    \hline
    K.{--}S.~\cite{KS}
    &
      Not assumed
    &
      Not assumed.
      However, in the setting Lorentzian pre-length spaces (the
      topic of~\cite{KS}), the push-up
      property is always available.
    \\
    \hline
  \end{tabular}
\end{table}

\begin{remark}
 The framework of the present paper will be the one of causal spaces, recalled above. Another possible approach could have been to take as starting point the null-distance of Sormani{--}Vega~\cite{Sormani-Vega}. It would be interesting to develop such a parallel theory, and investigate the relations with the present work. 
\end{remark}

We shall write $x<y$ when $x\leq y, x\neq y$. We say that $x$ and $y$ are \emph{timelike} (resp.\ \emph{causally}) related if $x\ll y$  (resp.\ $x\leq y$).  Let $A\subset X$ be an arbitrary subset of $X$. We define the \emph{chronological} (resp.\ \emph{causal}) future of $A$ the set
\begin{align*}
I^{+}(A)&:=\{y\in X\,:\, \exists x\in A,\, x\ll y\}, \\
J^{+}(A)&:=\{y\in X\,:\, \exists x\in A,\, x\leq y\},
\end{align*}
respectively. Analogously, we define $I^{-}(A)$ (resp.\ $J^{-}(A)$) the   \emph{chronological} (resp.\ \emph{causal}) past of $A$. In case $A=\{x\}$ is a singleton, with a slight abuse of notation, we will write $I^{\pm}(x)$ (resp.\ $J^{\pm}(x)$) instead of  $I^{\pm}(\{x\})$ (resp.\ $J^{\pm}(\{x\})$).

\begin{definition}[Topological causal space]\label{def:topological-causal-space}
  We say that $(X,\ll,\leq,\mathfrak{T})$ is a \emph{topological causal space}, provided:
  \begin{itemize}
\item  $(X,\ll,\leq)$ is a causal space;
\item  $\mathfrak{T}$ is a Polish 
  and proper topology on $X$;
  \item In the product $X\times X$, the relation $\leq$ is closed and
    the relation $\ll$ is open.
    \end{itemize}
\end{definition}

Recall that a Polish topology is \emph{proper} if there exists a proper and separable metric 
inducing the topology; recall that a metric is proper if closed balls are compact. The class of topological causal spaces forms a subclass of closed ordered spaces in the sense of Nachbin~\cite{Nachbin}; see also~\cite{Minguzzi-Review2019} for a survey of more recent developments.

Throughout the paper, $I\subset \R$ will denote an arbitrary interval. 

\begin{definition}\label{D:causalcurve}[Causal/timelike curves]
A non-constant curve $\gamma:I\to X$ is called (future-directed) \emph{causal}  (resp.\ \emph{timelike}) if $\gamma$ is continuous and if for all $t_{1}, t_{2}\in I$, with $t_{1}<t_{2}$, it holds $\gamma_{t_{1}}\leq \gamma_{t_{2}}$ (resp.\ $\gamma_{t_{1}}\ll \gamma_{t_{2}}$). We say that $\gamma$ is a \emph{null} curve if, in addition to being causal, no two points on $\gamma(I)$ are related with respect to $\ll$. 
\end{definition}

We recall the following  

\begin{definition}
    Let $(X,\ll,\leq,\mathfrak{T})$ be a topological causal space. 
    We say that a set $A\subset X$ is weakly convex, if for all $x,y\in A$, such that $x\leq y$, there 
    exists a causal curve $\gamma$ contained in $A$ 
    connecting $x$ to $y$.
\end{definition}

\begin{remark}
  We point out that \Cref{D:causalcurve} is more general than the
  one given in the seminal paper~\cite{KS} by Kunziger and S\"amann,
  for we do not require that a causal curve is (locally) Lipschitz.
  In that paper, the authors proved that if $(M,g)$ is a  strongly
  causal Lorentzian manifold (i.e., any point admits arbitrarily small neighborhoods which are weakly convex)  with  $g$ continuous, then a causal Lipschitz curve is casual in the classical
  sense (i.e., its tangent vector is causal almost everywhere).

  Next Lemma recovers Kunziger's and S\"amann's result, by proving, in
  the same setting, that any causal curve admits a Lipschitz
  reparametrization.
  \end{remark}

\begin{lemma}\label{lem:lipschitz-reparametrization}
  Let $(M,g)$ be a strongly causal Lorentzian manifold with $g$ continuous.
  Let $\gamma:I\to M$ be a causal curve.
  Then there exists a reparametrization of $\gamma$ which is locally
  Lipschitz-continuous.
\end{lemma}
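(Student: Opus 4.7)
The plan is to show that $\gamma$ has locally finite $h$-length for some (hence every) auxiliary smooth complete Riemannian metric $h$ on $M$, and then reparametrize by a modified arc length. The heart of the argument is a local cone-containment bound obtained by combining continuity of $g$ with strong causality.

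I would fix $t_0\in I$, set $p:=\gamma(t_0)$, and choose coordinates $\phi:V\to\R^{n+1}$ with $\phi(p)=0$ and $(\phi_*g)(0)=\eta$, the Minkowski form. Continuity of $g$ lets me shrink $V$ so that, at every point of $V$, every future-directed $g$-causal vector has coordinate representation in the fixed wider Euclidean cone
\[
K\;:=\;\{(v^0,\bar v)\in\R\times\R^n : v^0\ge 0,\ |\bar v|\le 2\,v^0\}.
\]
By strong causality, I further pass to a weakly convex open neighborhood $U\subset V$ of $p$ with the property that every causal curve joining two points of $U$ stays inside $V$. Continuity of $\gamma$ then provides $\delta>0$ with $\gamma([t_0-\delta,t_0+\delta]\cap I)\subset U$.

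The decisive step is to show that for $x,y\in U$ with $x\le y$ one has $\phi(y)-\phi(x)\in K$. Since in a $C^{0}$ spacetime the causal relation is realized by locally Lipschitz causal curves, there exists such a $\sigma:[0,1]\to M$ joining $x$ to $y$; by the choice of $U$, $\sigma$ stays inside $V$, so $\dot\sigma$ is a $g$-causal vector almost everywhere and therefore lies in $K$ almost everywhere. Integrating in coordinates and using that $K$ is a closed convex cone gives $\phi(y)-\phi(x)\in K$. Applied to $x=\gamma(s)$, $y=\gamma(t)$ for $s<t$ in $[t_0-\delta,t_0+\delta]\cap I$, this yields the monotonicity of the time-coordinate $\gamma^{0}$ together with the Lipschitz estimate $|\bar\gamma(t)-\bar\gamma(s)|\le 2(\gamma^{0}(t)-\gamma^{0}(s))$. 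Hence the Euclidean (and therefore $h$-) length of $\gamma$ on $[t_0-\delta,t_0+\delta]\cap I$ is finite; a compactness argument then covers any compact subinterval $[a,b]\subset I$ by finitely many such good intervals, yielding $L_h(\gamma|_{[a,b]})<\infty$.

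Finally, to reparametrize, I would fix a base point $t_{*}\in I$ and set $\Phi(t):=(t-t_{*})+L_h(\gamma|_{[t_{*},t]})$ for $t\ge t_{*}$ (with the symmetric definition for $t<t_{*}$). The affine term makes $\Phi$ strictly increasing even if $\gamma$ is constant on some sub-interval, so $\Phi:I\to\Phi(I)$ is a continuous bijection. Setting $\widetilde\gamma:=\gamma\circ\Phi^{-1}$, the estimate $d_h(\widetilde\gamma(\tau_1),\widetilde\gamma(\tau_2))\le L_h(\gamma|_{[\Phi^{-1}(\tau_1),\Phi^{-1}(\tau_2)]})\le\tau_2-\tau_1$ shows that $\widetilde\gamma$ is $1$-Lipschitz with respect to $d_h$, hence locally Lipschitz. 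The main obstacle I expect is the decisive step: producing a Lipschitz causal curve witnessing $x\le y$ and ensuring it remains inside the cone-controlled coordinate neighborhood. This requires combining the standard characterization of the causal relation in continuous spacetimes (via Lipschitz causal curves) with the weakly convex neighborhoods supplied by strong causality, which together guarantee both the existence of such a Lipschitz witness and that it can be chosen inside~$V$.
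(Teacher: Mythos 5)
Your argument is essentially the same as the paper's: a coordinate chart with uniform cone containment near a point, a weakly convex neighborhood from strong causality, a Lipschitz causal curve witnessing $x\le y$ whose velocity lies in the fixed Euclidean cone a.e., and integration to obtain the cone estimate on causally related points of $\gamma$. The only difference is the final reparametrization step: the paper reduces to the case of $\gamma$ injective and reparametrizes by the time coordinate $\gamma^n$, whereas you use the modified arc length $\Phi(t)=(t-t_*)+L_h(\gamma|_{[t_*,t]})$, whose affine term avoids the injectivity reduction.
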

\begin{proof}
  We can assume without loss of generality that $\gamma$ is injective
  and that $0\in I$.
  Since the statement is local, it suffices to show that $\gamma$
  admits a Lipschitz reparametrization in a neighborhood of $0$.
  Let $U$ a neighborhood of $\gamma_0$.
  We may assume that $U$ is coordinated, i.e., that $U\subset \R^n$.
  Up to further restricting $U$, we can also assume that $g|_{U}$ is
  close to the standard Minkowski product, in the sense that, for all
  $x\in U$, the causal cone in $T_{x}U=\R^n$ of $g_x$ is contained in the
  causal cone of $-2\de x_n^2 +\frac{1}{2}\sum_{i=1}^{n-1} \de x_i^2$,
  i.e., if $X$ is a $g$-causal vector, then
  \begin{equation}
    \sum_{i=1}^{n-1} |X_i|^2
    \leq
    4|X_n|^2.
  \end{equation}
  By strong causality, we can assume $U$ to be
  weakly convex.
  Let $[a,b]$, such that $\gamma([a,b])\subset U$,
  and let $[\alpha,\beta]=\gamma_n([a,b])$.
  
  We claim that, for all $a\leq t_1\leq t_2\leq b$, it holds that
  \begin{equation}
    \label{E:causal-to-lipschitz}
    \sum_{i=1}^{n-1}
    |\gamma^{i}_{t_1}-\gamma^{i}_{t_2}|^2
    \leq
    4
    |\gamma^{n}_{t_2}-\gamma^{n}_{t_2}|^2
    .
  \end{equation}
  Indeed, fix $t_1\leq t_2$
  and let $\delta:[0,1]\to U$ be a causal (in the classical, i.e., differential sense) curve connecting
  $\gamma_{t_1}$ to $\gamma_{t_2}$.
  A direct computation gives
  \begin{align*}
    \sqrt{
    \sum_{i=1}^{n-1}
    |\gamma^{i}_{t_1}-\gamma^{i}_{t_2}|^2
    }
    &
      =
    \sqrt{
    \sum_{i=1}^{n-1}
    |\delta^{i}_0-\delta^{i}_1|^2
      }
      =
    \sqrt{
    \sum_{i=1}^{n-1}
    \left|\int_0^1 \dot\delta^i\right|^2
      }
      \leq
      \int_0^1
    \sqrt{
    \sum_{i=1}^{n-1}
      \left|
      \dot\delta^i
      \right|^2
      }
    \\
    &
      \leq
      2
      \int_0^1
      |
      \dot\delta^n
      |
      =
      2(\delta^n_1
      -
      \delta^n_0)
      =
      2|\gamma^n_t-\gamma^n_r|
      .
  \end{align*}

  It follows that, for all $\tau\in[\alpha,\beta]$, there exists a
  unique $t\in[a,b]$, such that $\gamma_n(t)=\tau$ (if it were not
  unique, then~\eqref{E:causal-to-lipschitz} would contradict the injectivity of
  $\gamma$).
  We can therefore define the reparametrization $\eta_\tau=\gamma_t$,
  so that $\tau=\gamma^n_t$.
  This reparametrization is Lipschitz-continuous, because
  $|\eta^n_{\tau_1}-\eta^n_{\tau_2}|=|\tau_1-\tau_2|$, and
  \begin{equation}
    |\eta^i_{\tau_1}-\eta^i_{\tau_2}|
    =
    |\gamma^{i}_{t_1}-\gamma^{i}_{t_2}|
    \leq
    2|\gamma^{n}_{t_1}-\gamma^{n}_{t_2}|
    =
    2|\tau_1-\tau_2|
    .
    \qedhere
  \end{equation}
\end{proof}

\section{Null synthetic setting}\label{Ss:mesures-on-H}

\subsection{Synthetic definitions}

In the general setting of topological causal spaces, the role of smooth 
null-hypersurfaces will be played by 
closed achronal sets where the causal relation does not trivialise in a sense that will be clarified later.

Among geometrically meaningful closed achronal sets, one can find, e.g.,
achronal boundaries and Cauchy horizons.
We recall below the definition of achronal boundary 
that is classical in the smooth literature, see for instance~\cite{Penrose-DiffTopGR, HawEll}.

\begin{definition}[Achronal boundary]
Let $(X,\ll,\leq,\mathfrak{T})$ be a topological causal space.
A set $H\subset X$ is an \emph{achronal boundary}  if 
there exists a set $S \subset X$ 
such that $H=\partial I^{+}(S)$.
\end{definition}

The next lemma shows that achronal boundaries are actually achronal, so the terminology is consistent.

\begin{lemma}\label{P:achronal-boundary}
  Let $(X,\ll,\leq,\mathfrak{T})$ be a topological causal space.
  Then for any subset $S\subset X$, $\partial I^{+}(S)$ is achronal.
\end{lemma}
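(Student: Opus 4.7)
The plan is to argue by contradiction: assume there exist $x,y\in\partial I^+(S)$ with $x\ll y$, and show this forces $y\in I^+(S)$, which is incompatible with $y\in\partial I^+(S)$.

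First I would observe that $I^+(S)$ is open. Indeed, by \Cref{def:topological-causal-space} the relation $\ll$ is open in $X\times X$, so for each fixed $s\in S$ the slice $I^+(s)=\{z\in X:(s,z)\in\,\ll\}$ is open. Writing $I^+(S)=\bigcup_{s\in S}I^+(s)$ gives an open set. In particular $\partial I^+(S)\cap I^+(S)=\emptyset$, so every point of $\partial I^+(S)$ lies in $\overline{I^+(S)}\setminus I^+(S)$.

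Next, since $x\in\overline{I^+(S)}$ and the topology $\mathfrak{T}$ is Polish (hence first countable), I pick a sequence $x_n\to x$ with $x_n\in I^+(S)$, and for each $n$ a point $s_n\in S$ with $s_n\ll x_n$. The crucial step uses again that $\ll$ is open in $X\times X$: the pair $(x,y)$ lies in the open set $\ll$, so there is a neighborhood $U\times V$ of $(x,y)$ with $U\times V\subset\,\ll$. For $n$ large enough, $x_n\in U$, hence $x_n\ll y$. By the transitivity of $\ll$ (built into the definition of a causal space), $s_n\ll x_n\ll y$ gives $s_n\ll y$, so $y\in I^+(s_n)\subset I^+(S)$. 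This contradicts $y\in\partial I^+(S)\subset X\setminus I^+(S)$ established in the first step.

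There is no serious obstacle here; the argument is a direct adaptation of the standard smooth proof of achronality of $\partial I^+(S)$. The only point requiring a little care is checking that the minimal axioms of \Cref{def:topological-causal-space} (openness of $\ll$ in the product, and transitivity of $\ll$) are enough to push through both the openness of $I^+(S)$ and the transitive chaining $s_n\ll x_n\ll y$; no appeal to antisymmetry, to push-up, or to any metric or manifold structure is needed.
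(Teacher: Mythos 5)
Your argument is correct and is essentially the paper's proof: both argue by contradiction from $x,y\in\partial I^+(S)$ with $x\ll y$, use the openness of $\ll$ to find a point of $I^+(S)$ near $x$ that is still $\ll$-below $y$, and then invoke transitivity to force $y\in I^+(S)$. The only cosmetic difference is that the paper uses openness of the slice $I^-(y)$ directly (picking $w\in U\cap I^+(S)$ with $U\subset I^-(y)$), whereas you use a sequence $x_n\to x$ and a product neighborhood $U\times V\subset\,\ll$, which is equivalent but slightly longer; the paper also leaves implicit the openness of $I^+(S)$, which you spell out.
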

\begin{proof}
  Assume on the contrary that there exist $x,y\in \partial I^{+}(S)$,
  such that  $x\in I^-(y)$.
  Since $I^-(y)$ is open, there exists an open neighborhood   $U\subset I^{-}(y)$ of $x$.
  Since $x\in \partial I^{+}(S)$, then there exists
  $w\in U\cap I^{+}(S)$.
  It follows that $w\ll y$ and thus, by transitivity of $\ll$, we get that  $y\in I^{+}(S)$, a contradiction.
\end{proof}

It is worth underlining that 
every maximal achronal set is an achronal boundary, see for instance~\cite[Theorem 2.97]{Minguzzi-Review2019}. Since 
achronal sets are subsets of their own achronal boundary, they 
provides the natural framework 
for a synthetic treatment of null geometry that  will take into account both the local and the global picture.

We  next define 
 the maximal and minimal elements of a closed achronal set $H$ with respect to
 the pre-order $\leq$, denoted below as $\finalWithH$ and $\initialWithH$ respectively. Such objects are well-known to be useful in the study of achronal boundaries  within  smooth spacetimes, see e.g.\;\cite[Section 6.3]{HawEll}, and in the analysis of the  $L^1$ optimal transport problem within metric spaces.

\begin{definition}\label{D:maxmin}
Given a closed achronal set
$H$, the set of \emph{final} elements of $H$ is given by 
\[
  \finalWithH : =
  \{ m \in H \colon \ \nexists x \in H, \ m \leq x, \ x \neq m \}, 
\]
and the set of \emph{initial} elements of $H$ is given by 
\[
\initialWithH
: =
\{ m \in H \colon \ \nexists x \in H, \ x \leq m, \ x \neq m \}. 
\]
Then the achronal set without end-points is defined by 
\[
\noending{H} : = H \setminus  \left( \initialWithH \cup 
\finalWithH \right).
\]
Finally, the set of \emph{static} elements  of $H$ is given by
\[
\staticWithH : = \initialWithH \cap \finalWithH
.
\]
\end{definition}

In the sequel, whenever no confusion arises, we shall write
$\initial=\initialWithH$, and $\final$ and
$\static$ with the analogous meaning.

We briefly discuss 
the measurability of $\initial$ and $\final$.

\begin{lemma}[]\label{L:measurable}
In a topological causal space $X$,
the sets $\initialWithH, \finalWithH, \staticWithH$,
and $H^\circ$ are Borel measurable.
\end{lemma}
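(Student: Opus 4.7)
The plan is to reduce the Borel measurability of $\finalWithH$ and $\initialWithH$ to showing that a specific projection of a closed set is $F_\sigma$, exploiting the $\sigma$-compactness guaranteed by the properness of $\mathfrak{T}$; the sets $\staticWithH$ and $H^\circ$ will then follow at once by elementary Boolean operations on Borel sets.

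The key observation I would use is the identity
\begin{equation*}
H \setminus \finalWithH = \pi_1(E),
\qquad
H \setminus \initialWithH = \pi_2(E),
\end{equation*}
where $\pi_i \colon X \times X \to X$ denotes the projection on the $i$-th factor and
\begin{equation*}
E := (H \times H) \cap \{(x,y) \in X \times X : x \leq y\} \cap \{(x,y) \in X \times X : x \neq y\}.
\end{equation*}
By the definition of topological causal space, $\{\leq\}$ is closed in $X \times X$, and $H \times H$ is closed by assumption, so $(H \times H) \cap \{\leq\}$ is closed.

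Properness enters as follows. I would fix a proper separable metric $d$ inducing $\mathfrak{T}$. On the one hand, the complement of the diagonal is $F_\sigma$, namely $\{x \neq y\} = \bigcup_{k \in \N} \{d(x,y) \geq 1/k\}$, and on the other hand $H$ is $\sigma$-compact, say $H = \bigcup_n K_n$ with each $K_n$ compact. Combining, $E$ is a countable union of compact subsets of $X \times X$. Hence each $\pi_i(E)$, being a countable union of continuous images of compacts, is itself $F_\sigma$, and in particular Borel. Therefore $\finalWithH = H \setminus \pi_1(E)$ and $\initialWithH = H \setminus \pi_2(E)$ are Borel, $\staticWithH = \finalWithH \cap \initialWithH$ is Borel, and $H^\circ = H \setminus (\finalWithH \cup \initialWithH)$ is Borel.

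The main subtle point—and the reason properness (not merely the Polish property of $\mathfrak{T}$) is needed—is that the projection of a Borel set is in general only analytic (Suslin), not Borel. Without $\sigma$-compactness one could only conclude universal measurability of $\finalWithH$ and $\initialWithH$, not genuine Borel measurability. Properness bypasses this entirely, since projections of compact sets are compact, and a countable union of compacts has an $F_\sigma$ projection.
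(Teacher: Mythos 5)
Your proposal is correct and follows the same route as the paper's proof: both identify $H \setminus \initial$ and $H \setminus \final$ as projections of $(H\times H)\cap\{\leq\}\setminus\Delta$, use closedness of $\leq$ and $H$ together with properness of the metric to get $\sigma$-compactness, and conclude via the fact that projections of $\sigma$-compact sets are $\sigma$-compact (hence $F_\sigma$, hence Borel). Your version is slightly more explicit in breaking down why the set is $\sigma$-compact and in spelling out the role of properness, but the argument is the same.
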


\begin{proof}
Since the topology on $X$ 
is induced by a proper metric
and $J^{+}$ is closed, 
denoting by $\Delta = \{(x,x) \colon x \in X \}$, 
the set $J^{+} \setminus \Delta \cap H \times H$ is $\sigma$-compact, 
i.e., is countable union of compact sets.  
As projections of $\sigma$-compact set is $\sigma$-compact, 
writing
\[
\initial = H \setminus P_{2}\left( (J^{+} \setminus \Delta) \cap H \times H \right),
\]
we obtain  that $\initial$ is Borel measurable and the same conclusion is valid for $\final$ proving the  claims.
\end{proof}

\begin{remark}\label{R:intervalH}
If $\gamma : I \to H$ is any causal curve then 
$\gamma_I \cap 
\initial$ contains at most a single element:   
suppose by contradiction there are two of them, then one has to be in the causal future of the other giving a contradiction. The same is true for $\gamma_I \cap 
\final$.

Hence $\gamma^{-1}(\initial)$ and 
$\gamma^{-1}(\final)$
are two closed intervals in $I$ 
on each of which $\gamma$ is constant. 
Therefore, without loss of generality we can assume that if $\gamma$ 
is a non-constant causal curve and if $I = [0,1]$, then $\gamma_{(0,1)} \subset \noending{H}$.
\end{remark}

We introduce the notion of gauge
function.
\begin{definition}\label{defn:gauge}
  Let $H$ be a closed  achronal set and let $G:\noending{H}\to\R$ be a Borel map. 
  We say that $G$ is a \emph{gauge} function for $H$, if it satisfies the following properties:
  \begin{enumerate}
%  \item
%    $G=-\infty$ on $\initial$ and $G=+\infty$ on
%    $\final$;
  \item for any injective causal curve $\gamma$ in 
  $\noending{H}$,
    $G\circ \gamma$ is strictly increasing and continuous;
  \item for any causal curve $\gamma:[0,1]\to{H}$,
    such that $\gamma_{(0,1)}\subset \noending{H}$, then
    $\sup_{t\in (0,1)} G (\gamma_t)$ and 
    $\inf_{t\in (0,1)} G (\gamma_t)$ are both finite.
  \end{enumerate}
Finally a gauge function G is called \emph{proper}
if the pre-image of every compact set is precompact. 
\end{definition}

    Assumption (1) implies that 
    for any causal (possibly non injective) curve $\gamma$ in $\noending{H}$,
    $G\circ \gamma$ is non-decreasing
    and continuous.
    This fact follows by reparametrizing $\gamma$. 
    Assumption (2) above is needed in order to avoid possible
  blow-ups of the gauge. 
The concept of properness of a gauge will be used in \Cref{S:Penrose} to suitably reformulate the null completeness property.

%\smallskip
In the smooth setting, a causal curve $\gamma:[0,1]\to H$  solves the geodesics equation if and only
if $G\circ\gamma$ is affine 
for a natural gauge $G$ (see \Cref{R:compatibility} below).
We take this last observation as the definition of a
causal null geodesic in the null synthetic setting.

\begin{definition}[$G$-Causal curves]\label{D:Gcausalcurve}
Let $H$ be a closed achronal set and let 
$G:\noending{H}\to \R$ be a gauge.
We say that a curve $\gamma:[0,1]\to H$ is \emph{$G$-causal}, if and only
if it is causal,  $\gamma_{(0,1)}\subset\noending{H}$,
and $G\circ \gamma|_{(0,1)}$ is an affine function.

The set of $G$-causal curves 
will be denoted by $\Caus_G\subset C([0,1]; H)$.  
\end{definition}

Sometimes, with a slight abuse of notation, we will consider
$G$-causal curves whose domain is not the interval $[0,1]$.

We are now in position to give the definition of synthetic null hypersurface.
We denote by $\M^{+}(H)$
the set of non-negative Radon measures on $H$.

\begin{definition}[Synthetic Null Hypersurface]\label{def:synthetic-null-hypersurface}
Let $(X,\ll,\leq,\mathfrak{T})$ be a topological causal space.
We say that the triple $(H,G, \mm)$ is a \emph{synthetic null hypersurface},
if
\begin{itemize}
    \item 
$H$ is
a closed achronal set, %such that $\overline{\noending{H}}=H,
\item 
$G:\noending{H}\to\R$ is a gauge for $H$,
\item $\mm\in\M^{+}(H)$ does not give positive mass to the static elements of $H$, i.e., $\mm(\static)=0$.
\end{itemize}
\end{definition}

\begin{remark}
The request of $\mm(\static)=0$ is necessary to avoid examples of closed achronal sets which do not have the geometry of a null hypersurface. For instance,  if $H$ is an acasual set (e.g., a spacelike hypersurface), then the set of static points will coincide with $H$ itself and therefore 
the only admissible measure 
to consider over $H$ in order to form a synthetic null hypersurface is $\mm\equiv 0$.
\end{remark}

\subsection{Compatibility with the smooth setting}\label{SS:CompatSyntNullHypers}

We next discuss the compatibility of the above synthetic constructions with the smooth
theory, by associating to a classical null hypersurface a synthetic
one.
This is the content of the following remarks.

\begin{remark}[Compatibility with the smooth setting: the global case]\label{R:compatibility}
We proceed by first discussing the global case, i.e., when a global smooth cross-section is available.
Let $H$ be a causal and achronal, smooth null hypersurface, admitting
 $S\subset H$ a global smooth cross-section for $H$.
 Using the cross-section $S$, we can build a global null geodesic vector field
 $L$ over $H$ and the corresponding flow map
$\Psi_{L} : S \times \R \to H$.
A natural gauge function $G_{L,S}:H\to \R$
is then defined as the function such that $z=\gflow_L(p,G_{L,S}(z))$, for some
(unique) $p\in S$.  
To check that $G_{L,S}$ is indeed a gauge function, observe that
if $\gamma$ is a causal curve in $H$, then it lies in the generator of $H$ passing through $\gamma_0$ and, 
in particular, it must be of the form
$\gamma_t=\gflow(\gamma_0, s(t))$, 
for some non-decreasing function $s$.
Therefore $G_{L,S}(\gamma_t)=G_{L,S}(\gamma_0)+s(t)$, which is
non-decreasing.
If $\gamma$ is also injective, then $s$ must be strictly increasing.
Moreover, we point out that \emph{a causal curve is $G_{L,S}$-causal if and only 
if it is a null geodesic in $H$}.
Finally, as reference measure over $H$, we choose the rigged measure $\vol_L$ (see~\cite{CMM24a}).
A synthetic null hypersurface associated to $H$ is $(H, G_{L,S}, \vol_L)$.
\end{remark}

\begin{remark}[Compatibility with the smooth setting: the local case]\label{R:compatibility-local}
In several situations of interest (e.g., in the presence of recurring geodesics like the case of compact Cauchy horizons~\cite{MoncIsenCMP1983, BustReiGRG21, GuMinCMP2022})   a global cross-section for $H$ does not exist.
In these cases, it is possible to proceed via a local argument as follows. 

Let $(M,g)$ be a Lorentzian manifold and $U\subset M$ is any open set. For  $x,y\in U$, we set by definition $x \ll_U y$,  provided that there exists a $C^1$ curve 
$\gamma:[0,1]\to U$, connecting $x$ to $y$, such that $\dot \gamma$ 
is a future-directed timelike vector.
Notice that $\ll_U$ is (in general) strictly contained  in $\ll\cap\; (U\times U)$;
in a similar way we define $\leq_U$ and
denote by $\mathcal{T}_U$ the topology on $U$ induced by $\mathcal{T}$. 
We will write $X_U$ as a shorthand notation for the 
topological causal space $(U,\leq_U,\ll_U,\mathcal{T}_U)$; 
in particular $X_U$ is the topological causal space 
induced by the Lorentzian manifold $(U,g|_U)$.

Now if $H$ is a smooth null hypersurface, 
then for any $x\in H$, we can take a coordinated neighborhood $U$ for $x$.
Up to restricing $U$, we have that $H \cap U$ is causal, achronal and
closed and that
a cross-section $S$ is given by the intersection of $H$ and the
$0$-level set of the time coordinate.
We can thus apply the construction described in \Cref{R:compatibility}.
\end{remark}

\begin{remark}[Compatibility with the smooth setting: achronal boundaries]\label{R:compatibility-penrose}
  Let $(M,g)$ to be a causally simple (or, equivalently, strongly causal and
  causally closed),
    $C^2$ Lorentzian manifold.
  Let $A\subset M$ be a compact, 
 achronal,     space-like,
  $C^2$ sub-manifold of codimension
  $k\geq 2$. Then 
  \[H :=\partial I^{+}(A)\]
  is a locally-Lipschitz topological hypersurface 
  (see, e.g.,~\cite[Lemma 3.17]{Penrose-DiffTopGR} and~\cite[Proposition 6.3.1]{HawEll}).
  If $A$ is a singleton, $H$ is a future cone; if the codimension of $A$ is $2$, then $H$ is the central object in the proof of Penrose's singularity theorem.
  To include achronal boundaries of this type in our setting, 
  we proceed as follows.

 Under the above assumptions, it is a well-known fact that
 \begin{equation}
   H
   =
   \partial I^{+}(A)
   =
   J^{+}(A)\setminus I^{+}(A)   .
 \end{equation}
 Indeed, the inclusion ``$\subset$'' is a consequence of the the causal closedness of $(M,g)$ and the compactness of $A$. Conversely, the inclusion ``$\supset$'' follows by the $C^2$-regularity of $g$, for a proof see~\cite[Prop.\;2.13]{Min}.

 Firstly, we claim that $\initial\subset A$.
 Indeed, if $x\in \initial\subset J^{+}(A)$, there exists
 $y\in A\cap J^{-}(x)$; it holds that $y\notin I^{+}(A)$, otherwise the push-up
 property would imply that $x\in I^{+}(A)$, a contradiction.
 Hence $y\in H$, therefore $y=x$, otherwise $x$ would not be an
 initial point (i.e., minimal for $\leq$).

 We next claim that $J^{+}(A)=J^{+}(\initial)$.
 Indeed, fix $x\in J^{+}(A)$.
 By compactness, there exists $y$, a minimal element (w.r.t.\ $\leq$)
 of $A\cap J^{-}(y)\subset J^{+}(A)$; $y\in H$, otherwise,
 $y\in I^{+}(A)$, a contradiction with its minimality.
 Let $z\in H\cap J^{-}(y)$; minimality of $y$ implies that $z=y$,
 therefore $y\in\initial$, thus $x\in J^{+}(\initial)$.

 As a consequence, $I^{+}(A)=I^{+}(\initial)$ and
 \begin{equation}
   \label{eq:achronal-boundary-is-horismos}
   H
   =
    J^{+}(A)
    \setminus
    I^{+}(A)
    =
    J^{+}(\initial)
    \setminus
    I^{+}(\initial)    
    .
  \end{equation}

 Let $M^{-}:H\setminus\final\to A$ be the map defined by
  \begin{align*}
    M^{-}(x)
    :=
    \inf J^{-}(x)\cap A
    ,
  \end{align*}
  where the infimum is taken w.r.t.\ the causal order $\leq$.
  Since $x$ is not a final point, then $J^{-}(x)\cap A$ is totally
  ordered (i.e., we can always compare elements), hence there is at
  most one minimal point.
  Compactness of $A$ guarantees that the infimum is attained.
  Checking the measurability of $M^{-}$ is standard.
  Notice that $M^{-}$ takes values in $\initial$.

  We claim that the set $A\setminus\initial$ is compact.
  Indeed, let $x_n$ be a sequence in $A\setminus\initial$; by
  the compactness of $A$, up to a subsequence, $x_n\to x\in A$.
  Let $y_n\in A$, be such that $y_n< x_n$; up to a subsequence
  $y_n\to y\in A$.
  Causal closedness yields $y\leq x$.
  If $y\neq x$, then $y< x$, proving that $x\in A\setminus\initial$.
  If on the contrary $x=y$, then, since $x_{n}$ and $y_{n}$ converge to
  the same point $x$ and are causally related, then $T_{x}A$ is not
  space-like, a contradiction. Thus, $A\setminus\initial$ is compact.
  
  It follows that $\initial$ is open in $A$, thus a
  sub-manifold. Notice also that $\initial$ is acausal.

  Consider $\normal \initial$, the normal bundle to
  $\initial$ and endow it with the restriction of the ambient
  Lorentzian metric $g$; notice that each fiber of
  $\normal \initial$ is a $k$-dimensional Minkowski spacetime.
  Let $W\in\vfield(TM)$ be a vector field inducing the time
  orientation;  since $A\subset M$ is a smooth spacelike hypersurface, we can assume $W|_{A}\in\vfield(\normal A)$.
 Let
  \begin{equation}\label{eq:defhatH}
    \hat H:=\{
    v\in\normal \initial
    \colon
    g(v,v)=0
    \text{ and }
    g(v,W)< 0
    \}
    .
  \end{equation}
  We claim that
  $H\setminus \initial\subset \exp(\hat H)$.
  Indeed, Equation~\eqref{eq:achronal-boundary-is-horismos} implies
  that if $x\in H\setminus\initial$, there exists a null curve
  $\gamma$ connecting $x$ with $\initial$; by
  \Cref{P:non-branching-curves}, we can assume that $\gamma$ is a
  geodesic.
  It is immediate to see that $\dot\gamma_0$ a future-directed null
  vector and that it is orthogonal to $\initial$, thus
  $\dot\gamma_0\in\normal\initial$.

 We now define a gauge for $H$: 
  for $x\in \noending{H}$, $x=\exp(v)$ for some (unique)
  $v\in \hat H\cap \normal_{M^-(x)}\initial$; define
  \begin{equation}\label{eq:defGhatH}
    G(x):=- g(v,W).
  \end{equation}
  Notice that $G$ can be continuously extended to $0$ on the set
  $\initial$.

Next, we endow the manifold $\hat H\subset \normal \initial$ with a  tangent vector
  field $\hat L\in \vfield(T\hat H)$.
  In order to do so, notice that at each point $v\in
  \normal_{x}\initial$, the tangent space splits  as the direct sum (in the sense of linear algebra)
  \[
    T_{v}\normal\initial
    \cong
    \normal_{x}\initial\oplus T_{x}\initial
    =
    T_{x}M
    .
  \]
  We denote by $\Sigma: T\normal\initial\to TM|_{\initial}$,
  $\Sigma_{v}: T_{v}\normal\initial \to T_{x}M$, such a morphism of fiber
  bundles.
  Let $\hat g:= \Sigma^*g$ be the pull back on $T\normal\initial$ of the Lorentzian metric $g$; 
  since on each fiber $\Sigma_{v}$ is an isomorphism,  we can also
  define $\hat W\in \vfield(T\normal \initial)$ as
  $\hat{W}_{v}:=(\Sigma_{v})^{-1}(W)$, giving a time-orientation to
  $\normal \initial$.
Observing that  $\hat H$ is a smooth null hypersurface in the smooth Lorentzian manifold $(\normal \initial,\hat g)$, we can set
  $\hat L$ as the null vector field
  $\hat L\in\vfield(\normal \hat H)$, such that
  $\hat g(\hat L, \hat W)=-1$.

  Denote by $U\subset \hat H$ the open set where the exponential map is
  defined and its differential has maximal rank.
  For any $v\in U$, there exists a neighborhood $U_{v}\subset U$, such
  that $\exp|_{U_{v}}$ is a diffeomorphism on its image.
  In particular  
  \begin{equation}\label{eq:defHv}
  H_{v}:=\exp(U_{v})
  \end{equation}
  is a smooth null hypersurface.
  We endow $H_{v}$ with the null geodesic vector field
  $L^v:=\de\exp|_{U_{v}}[\hat L]$ and consider the rigged volume
  $\vol_{L^v}$.
  By pulling back the rigged measure $\vol_{L^{v}}$ associated to $L^v$, we obtain the measure
  \begin{equation}\label{eq:defhatmmloc}
  \hat \mm_{v}:=(\exp|_{U_{v}})^{-1}_{\sharp}\vol_{L^{v}}\in\M^{+}(U_{v}).
  \end{equation}
  Given $v_1,v_2 \in U$, it is immediate to see that the
  measures $\hat \mm_{v_1}$ and $\hat \mm_{v_2}$ coincide on
  $U_{v_1}\cap U_{v_2}$.
  We can therefore glue these local measures to define a global measure
  $\hat \mm\in\M^{+}(U)$.
  Let
  \[
    B:=
    \{v\in U
    \colon
    \exists \lambda>1,
    \;
    \exp(\lambda v)\notin I^{+}(A)
    \}
    .
  \]
  On one hand, it clearly holds that $\exp(B)\subset \noending{H}$.
  On the other hand, if $x\in \noending{H}$, then $x$ is not a focal point
  for the geodesic connecting $\initial$ to $x$
  (see~\cite[Th.~6.16~(a)]{Minguzzi-Review2019}); thus it is not a
  conjugate point, meaning that the differential of the exponential map
  has maximal rank at $\exp^{-1}(x)$, and therefore  $\exp^{-1}(x)\in B$.
  Thus
  \[
  \exp(B)= \noending{H},
  \]
  and we can endow $\noending{H}$ with $\mm:=\exp_{\sharp}(\hat\mm\llcorner_{B})$.
  A synthetic null hypersurface associated to $H$ is then
  $(H, G, \mm)$.
  \end{remark}

\subsection{Optimal transport and the synthetic \NEC\ \texorpdfstring{$\NC^{e}(N)$}{NCe{(N)}}}

A concept analogous to the one of optimal dynamical plan can be introduced in the
synthetic setting by means of the gauge function and the $G$-causal curves 
as follows. 
Firstly, given any two probability measures $\mu_0,\mu_1$, we introduce 
\[
\OptCaus^H(\mu_0,\mu_1) : = \{ \nu \in \mathcal{P}(C([0,1]; H)) \colon 
(e_i)_\sharp \nu = \mu_i, i=1,2, \ \nu(\Caus) = 1 \},
\]
where $\Caus \subset C([0,1]; H)$ is the closed subset of  causal curves and 
$e_{i} : C([0,1]; H) \to H$ are the evalutation map at $t = i$, with $i=0,1$.
We also recall the definition 
of the set of causal couplings
\[
\Pi_{\leq}(\mu_0,\mu_1)
= \{\pi \in \mathcal{P}(X\times X) \colon 
(e_{i})_\sharp \pi 
= \mu_{i}, 
\ 
i = 0,1
\}.
\]
In particular, 
$\OptCaus^H(\mu_0,\mu_1) \neq \emptyset$ only if $\Pi_\leq(\mu_0,\mu_1)\neq \emptyset$.
Notice that since $H$ is closed, as topological space $H$ is Polish and therefore $C([0,1]; H)$ is Polish with the topology of the uniform convergence.
In particular, every element of 
$\mathcal{P}(C([0,1]; H))$ is inner regular with respect to compact sets of 
$C([0,1]; H)$.

\begin{definition}
  Let $(H,G,\mm)$ be a synthetic null hypersurface and let $\mu_0,\mu_1\in \Prob(H)$ be
  two probability measures.
  Any $\nu\in\OptCaus^H(\mu_0,\mu_1)$ 
  is said to be \emph{$G$-causal} if
  and only if $\nu(\Caus_{G})=1$. The set of such $G$-causal plans $\nu$ is denoted by  $\OptGeo^G(\mu_0,\mu_1)$.
\end{definition}

%\smallskip
Given a synthetic null hypersurface $(H,G,\mm)$,
for $\mu \in \Prob({H})$, the Boltz\-mann{--}Shannon  
entropy $\Ent(\mu|\mm)$ of $\mu$ with respect to $\mm$ is defined by
\[
 \Ent(\mu|\mm) : = \int_{X} \rho \log(\rho) \, \mm
 \in [-\infty,+\infty),
\]
if $\mu = \rho \, \mm$ and $(\rho\log(\rho))_{+}$ is $\mm$-integrable;  
otherwise we set $\Ent(\mu|\mm) = +\infty$. 
We denote by
\[
\Dom(\Ent(\cdot|\mm)):=\{\mu\in \Prob(X): \Ent(\mu|\mm)\neq+\infty\}.
\]

Let us
consider the following dimensional variant of the Boltzmann{--}Shannon entropy: given $N>0$, 
for $\mu \in \Dom(\Ent(\cdot|\mm))$, define
\begin{equation}\label{eq:defSN}
\U_{N}(\mu|\mm) : = \exp\left(-\frac{\Ent(\mu|\mm)}{N} \right), 
\end{equation}
and set $\U_{N}(\mu|\mm) = 0$ otherwise. 
Such a functional $\U_{N}$ is well-known
in information theory as the Shannon entropy power (see, e.g.,~\cite{DCT-1991}; for instance, the entropy
power on $\R^N$ is the functional $\U_{N/2}$), 
and it was studied in connection to Ricci curvature bounds in~\cite{EKS}.

\begin{definition}[{$\NC^{e}(N)$ for synthetic null hypersurfaces}]\label{D:syntheticnull}
Let $N>0$. Let $(X,\ll, \leq, \mathfrak{T})$ be a topological causal
space.

The synthetic null hypersurface $(H,G,\mm)$ satisfies the \NEC\ $\NC^{e}(N)$ if and only if the following holds:
for all measures $\mu_0,\mu_1\in \Prob(H)$
such that 
$\Pi_{\leq}(\mu_0,\mu_1)\neq \emptyset$, 
there exists $\nu\in\OptGeo^G(\mu_0,\mu_1)$ 
%a $G$-causal monotone Wasserstein geodesic
%$(\mu_{t})_{t}$ connecting 
%$\mu_0$ to $\mu_1$
such that 
\begin{equation}\label{E:concavity}
  \U_{N-1}(\mu_{t}|\mm)
  \geq
  (1-t)
  \U_{N-1}(\mu_0|\mm)
  +
  t
  \U_{N-1}(\mu_1|\mm)
  , \quad \text{ for all }t\in [0,1],
\end{equation}
where $\mu_{t}:=(e_{t})_{\sharp}\nu$.
\end{definition}

We now collect two geometric consequences of \Cref{D:syntheticnull}. 
The first one is contained in the following

\begin{remark}[$\NC^e$ implies weak convexity]\label{R:absolutelycontinuous}
Note that if both 
$\mu_{0},\mu_{1}$ are not contained in $\Dom(\Ent(\cdot|\mm))$, then~\eqref{E:concavity} is trivially satisfied. 
On the other hand, if either $\mu_{0}$ or $\mu_{1}$ belongs to
$\Dom(\Ent(\cdot|\mm))$, then~\eqref{E:concavity} implies that,
for all $t \in (0,1)$,
$\mu_{t}$
is absolutely continuous with respect to $\mm$.

Notice also that if $x\leq y$ in $H$, then taking $\mu_0=\delta_{x}$, 
$\mu_1=\delta_{y}$, $\Pi_\leq(\mu_0,\mu_1)=\{\delta_{(x,y)}\}$.
\Cref{D:syntheticnull} implies that there exists $\nu\in\OptGeo^{G}(\mu_0,\mu_1)$, 
producing a $G$-causal curve from $x$ to $y$.
In other words, $H$ is weakly convex.
\end{remark}

We now prove that the sets of initial and 
final points can be neglected provided 
the set of $G$-causal curves satisfy the following compactness property.

\begin{definition}[Tightness of the gauge]
    A gauge function $G$ for $H$ is said to be \emph{\ascar} if for every $x\in H$, there exists a compact
  neighborhood $U$,
such that
\begin{equation}
  \label{eq:ascoli-arzela}
  \{\gamma \in \Caus_{G} \colon \gamma_0 , \gamma_1 \in U \}
  \qquad
  \text{ is compact in } C([0,1]; H).
\end{equation}
\end{definition}

The tightness assumption is independent from the non-branching condition,
that is usually assumed in the literature  when that initial and final points have measure zero. 

\begin{proposition}\label{P:extrimal-negligible}
Let $(H,G,\mm)$ be a 
synthetic null hypersurface 
satisfying the \NEC\ 
$\NC^{e}(N)$, for some $N> 0$.
Assume $G$ to be \ascar.
Then 
\[
\mm(\initial)=\mm(\final)
  =
  0.
\]
\end{proposition}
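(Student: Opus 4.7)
The plan is to argue by contradiction: assume $\mm(\initial)>0$ and construct a Wasserstein-type geodesic $\{\mu_t\}$ along which the $\NC^e(N)$-concavity of $\U_{N-1}$ forces $\Ent(\mu_t|\mm)$ to stay uniformly bounded above, while geometric concentration near $\initial$ forces it to diverge to $+\infty$ as $t\to 0^+$.

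Using Radon regularity of $\mm$ together with the local compactness assumption, I first fix a compact set $A\subset\initial$ with $0<\mm(A)<+\infty$ contained in a neighborhood $U$ for which $\{\gamma\in\Caus_G\colon \gamma_0,\gamma_1\in K\}$ is compact in $C([0,1];H)$ for every compact $K\subset U$. Since $\mm(\static)=0$ and $\NC^e(N)$ implies weak convexity of $H$ (Remark~\ref{R:absolutelycontinuous}), for $\mu_0$-a.e.\ $x\in A$ there is a non-constant $G$-causal curve issuing from $x$; a measurable selection produces a Borel map $f\colon A\to H\setminus\initial$ with $f(x)>x$ and $f(A)\subset K''$ for some compact $K''\subset U$ (one picks $f(x)=\gamma^x_{s(x)}$ for a measurable time $s(x)>0$ small enough that continuity keeps the value inside $U$). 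Setting
\[
\mu_0:=\frac{1}{\mm(A)}\mm\llcorner_A,\qquad \mu_1:=f_\sharp\mu_0,
\]
one has $\Ent(\mu_0|\mm)=-\log\mm(A)<+\infty$, $(\id,f)_\sharp\mu_0\in\Pi_\leq(\mu_0,\mu_1)\neq\emptyset$, and $\supp\mu_0\cap\supp\mu_1=\emptyset$ (since $x\in\initial$ and $x<f(x)$ immediately force $f(x)\notin\initial$). Applying $\NC^e(N)$ provides $\nu\in\OptGeo^G(\mu_0,\mu_1)$ satisfying~\eqref{E:concavity}; dropping the non-negative term at $t$ gives the explicit uniform upper bound
\[
\Ent(\mu_t|\mm)\;\leq\;-(N-1)\log(1-t)-\log\mm(A)\;\leq\;(N-1)\log 2-\log\mm(A)\;=:\;C, \qquad t\in(0,1/2].
\]

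For the matching lower bound, note that $\nu$-a.e.\ curve $\gamma$ satisfies $\gamma_0\in A$ and $\gamma_1=f(\gamma_0)\neq\gamma_0$. By the very definition of $G$-causal curve one has $\gamma_{(0,1)}\subset\noending{H}$, so in particular $\gamma_t\notin\initial$ for $t\in(0,1)$; since moreover $\gamma_0<\gamma_1$ gives $\gamma_1\notin\initial$, we deduce $\mu_t(\initial)=0$ for every $t\in(0,1]$. Moreover, $\supp\nu$ is contained in the compact set $\{\gamma\in\Caus_G\colon \gamma_0,\gamma_1\in A\cup K''\}$ by the local compactness hypothesis, hence uniformly equicontinuous, so for every open neighborhood $V\supset A$ there exists $\delta_V>0$ with $\gamma_t\in V$ for all $\gamma\in\supp\nu$ and $t\in[0,\delta_V]$, and therefore $\mu_t(V)=1$ for such $t$. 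Thus $\mu_t$ is a probability measure concentrated in $V\setminus\initial$ for $t\in(0,\delta_V]$, and Jensen's inequality applied to the convex function $x\mapsto x\log x$ yields
\[
\Ent(\mu_t|\mm)\;\geq\;-\log\mm(V\setminus\initial).
\]
Taking a decreasing sequence of open neighborhoods $V_n\searrow A$, outer regularity of $\mm$ gives $\mm(V_n)\to\mm(A)$, hence $\mm(V_n\setminus\initial)\leq\mm(V_n)-\mm(A)\to 0$; picking $t_n:=\min(\delta_{V_n},1/2)>0$, the lower bound $-\log\mm(V_n\setminus\initial)\to+\infty$ contradicts the uniform upper bound $C$. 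Hence $\mm(\initial)=0$, and the equality $\mm(\final)=0$ follows by the symmetric argument (exchange the roles of $\mu_0$ and $\mu_1$, or equivalently reverse the causal order, which interchanges $\initial$ and $\final$).

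The hardest step is the measurable construction of the map $f$: the local compactness property is only an $\mm$-a.e.\ condition, so a Lusin-type reduction is first needed to pass to a compact subset $A\subset\initial$ of positive $\mm$-measure whose points share a common neighborhood $U$; on such $A$, $\NC^e(N)$ applied to Dirac data provides, for each $x\in A\setminus\static$, at least one non-constant $G$-causal curve leaving $x$, and a Kuratowski--Ryll-Nardzewski selection on the Polish space $C([0,1];H)$ yields a Borel family $x\mapsto\gamma^x$, which must then be truncated at a measurable time $s(x)>0$ small enough (uniformly in a measurable way) to guarantee $\gamma^x_{s(x)}\in K''\Subset U$.
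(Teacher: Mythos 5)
Your proposal is correct and reaches the same conclusion via a noticeably different organization of the argument. The paper first produces a crude $\mu_1$ by von Neumann selection on the analytic set $\Lambda=P_{1,3}(W_\epsilon)$, applies $\NC^e(N)$ once, then \emph{localizes after the fact}: it cuts out a positive-$\nu$-mass compact set $\bar C$ of curves with both endpoints in a single good neighborhood $U$ (via Fubini over a small time $\bar t$), renormalizes, and applies $\NC^e(N)$ a \emph{second} time; the final contradiction is purely measure-geometric, combining $\mm(C_t)^{1/(N-1)}\ge(1-t)\,\mm(C_0)^{1/(N-1)}$ with the disjointness $C_0\cap C_t=\emptyset$ and Hausdorff convergence $C_t\to C_0$ to deduce $\mm(C_0)\ge 2\,\mm(C_0)$. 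You instead localize \emph{before} invoking $\NC^e(N)$: you fix a compact $A\subset\initial$ inside a good neighborhood $U$ and build, by a measurable selection of $G$-causal curves \emph{plus a measurable truncation time} $s(x)$, an endpoint map $f:A\to K''\Subset U$ --- which lets you invoke $\NC^e(N)$ only once, obtain compactness of $\supp\nu$ directly from the local compactness hypothesis, and phrase the contradiction as an entropy blow-up: equicontinuity of the compact curve family squeezes $\mu_t$ into $V_n\setminus\initial$, Jensen gives $\Ent(\mu_t|\mm)\ge-\log\mm(V_n\setminus\initial)\to+\infty$, against the uniform bound from concavity. The two implementations share the same engine (entropy concavity plus Jensen; the paper's step $\mm(C_t)^{1/(N-1)}\ge\U_{N-1}(\mu_t|\mm)$ is the identical Jensen estimate), but you trade the paper's double application of $\NC^e(N)$ for a more delicate measurable construction of $f$. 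That construction is where care is needed: the multi-function $x\mapsto\{\gamma\in\Caus_G:\gamma_0=x\}$ has Borel graph but not closed values, so one should use a Jankov--von~Neumann type selection (as the paper itself does for $\Lambda$), and one must verify that the truncation time $s(x)=\sup\{t:\gamma^x_{[0,t]}\subset A^\delta\}$ is measurable and strictly positive $\mu_0$-a.e. --- both are true but deserve a sentence each. Once those points are filled in, your entropy-blow-up argument is clean; in particular you avoid the Hausdorff-convergence lemma altogether.
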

\begin{proof}
It will suffice to prove that $\mm(\initial\setminus\final)
= 0$:
by assumption $\mm(\static)=0$ (see
  \Cref{def:synthetic-null-hypersurface})
  and therefore it will follow that
  $\mm(\initial)=0$.
  The remaining part is symmetric.

Suppose by contradiction
\begin{equation}
\mm(\initial\setminus\final) > 0.
\end{equation}
Define the function $A:\noending{H}\times \noending{H}\cap J\to \R$ as 
$A(x,y)=G(y)-G(x)\geq 0$.
Fix $\epsilon>0$ and consider the set 
\begin{equation}
    W_\epsilon
    :=
    \{
    (x,z,w)\in \initial\setminus\final
    \times \noending{H}
    \times \noending{H}
    \colon
    (x,z),(z,w)\in J,
    A(z,w)>\epsilon
    \}
    .
\end{equation}
The set $W_\epsilon$ is Borel.
It holds that 
\begin{equation}
    \initial\setminus\final
    =
    \bigcup_{\epsilon>0}
    P_1(W_\epsilon).
\end{equation}
The (contradiction) assumption gives  $\mm(P_1(W_\epsilon))>0$, for some $\epsilon>0$.
By inner regularity of $\mm$, there exists  a compact set  $K\subset P_1(W_\epsilon)$ with $\mm(K)>0$.
We define the following analytic set: 
\[
\Lambda: = P_{1,3}(W_\epsilon).
\]
By Von Neumann's selection Theorem (see for instance~\cite[Th.~5.5.2]{Srivastava}),
we  deduce that $\Lambda$ contains the graph of a measurable map $T$. 
Define the absolutely continuous measure 
$\mu_{0} : =  \mm\llcorner_{K}/ \mm(K)  \in \mathcal{P}(H)$ and 
$\mu_{1} : = T_{\sharp} \mu_{0}$ 
so that 
$({\rm Id},T)_\sharp \mu_0 \in \Pi_\leq(\mu_0,\mu_1)$.

Up to neglecting a set of small $\mu_0$-measure, we can also assume that 
$T$ is continuous and therefore  also $\mu_1$ 
can be assumed with compact support: 
hence we have deduced the existence of a compact set $K \subset H$ such that 
$\mu_0 (K) = \mu_1(K) = 1$.

\Cref{D:syntheticnull} gives then a measure
$\nu\in\OptGeo^G(\mu_0,\mu_1)$ such that
the function $t\mapsto \U_{N}((e_{t})_{\sharp}\nu)$ is concave.
By \Cref{R:absolutelycontinuous} $(e_{t})_{\sharp}\nu \ll \mm$. 

Fix any $x \in K$ and an open neighborhood $U$ of $x$ such that $\mu_0(U)>0$;
then we deduce the existence of $\bar t>0$ such that 
\[
\nu (\{\gamma \in \Caus_{G} \colon \gamma_0 \in U, \ \gamma_{t} \in U \} ) > 0.
\]
Indeed by continuity of causal curves, 
\[
\nu \otimes \mathcal{L}^1
\left(\left\{ (\gamma,t) \in \Caus_{G} \times (0,1) \colon \gamma_0 \in U, \gamma_{t} \in U \right\} \right) >0,
\]
then Fubini's Theorem implies the claim. 
Then, since $\nu$ is inner regular, 
there exists a compact set $\bar C$ such that
\[
\bar C \subset  \{ \gamma \in \Caus_{G} \colon
\gamma_0 \in U, \gamma_{\bar t} \in U\}, \quad
\text{with } \quad \nu(\bar C ) >0.
\]
Denote by $\bar \nu$ the renormalized restriction of $\nu$ to $\bar C$ and 
define the probability measures
\[
\bar \mu_0 : = (e_0)_\sharp \bar \nu,
\qquad \bar \mu_1 : = (e_{\bar t})_\sharp \bar \nu.
\]
By construction, they are causally related 
and both concentrated on a compact set inside $U$. 
Since $U$ was arbitrary, we can take as $U$ the one additional
verifying the tightness hypothesis of the gauge $G$. 
By \Cref{D:syntheticnull}, we deduce the existence of a measure $\eta \in\OptGeo^G(\bar \mu_0,\bar \mu_1)$ such that
the function $t\mapsto \U_{N}((e_{t})_{\sharp}\bar \nu)$ is concave and   $\eta(C) = 1$, 
for some compact set $C$.

For each $t\in [0,1]$, denote now by $C_{t} : = e_{t}(C)$. Notice that each 
$C_{t}$ is a compact set, $C_{t}$ converges to $C_0$ in Hausdroff topology 
as $t \to 0$ and  
$C_0 \subset \initial\setminus\final$.
Finally, by the entropy inequality~\eqref{E:concavity} and a double Jensen's inequality we have
\[
\mm(C_{t})^\frac{1}{N-1} 
\geq \U_{N-1}(\bar \mu_{t}|\mm)
\geq (1-t) \U_{N-1}(\bar \mu_0|\mm)
%=(1-t)\mm(C_0)^\frac{1}{N-1}.
\]
Then we reach a contradiction as follows. 
Since $C_0$ is a subset of initial points, necessarily $C_{t} \cap C_0 = \emptyset$; 
 by Hausdorff convergence for each $\ve >0$ 
 for $t$ sufficiently close to $0$ we have 
\[
\mm((C_0)^\ve)
\geq 
\mm(C_0 \cup C_{t})= 
\mm(C_0) +\mm( C_{t}) 
\geq \mm(C_0) + (1-t)^{N-1}U_{N-1}(\bar \mu_0 |\mm)^{N-1} .
\]
Hence letting $t \to 0$ and then $\ve \to 0$ 
we reached a contradiction and proved the claim.
\end{proof}

Following the discussion on the compatibility of the notion of synthetic null hypersurfaces  with the smooth one
(see \Cref{R:compatibility}), we also report the
compatibility of the classical NEC with the synthetic $\NC^e(N)$ condition (see \Cref{D:syntheticnull}),
established in the authors' previous work~\cite{CMM24a}.

\begin{theorem}[Compatibility of $\NC^e$ with the smooth NEC]\label{thm:compatibilityNEC-NCe}
  Let $(M,g)$ be a Lorentzian manifold of
  dimension $n$.
  Let $H\subset M$ be a $C^2$ null hypersurface. 
  Assume that the classical NEC is valid along $H$,
        i.e., $\Ric(w,w)\geq 0$, for any normal vector $w$ to $H$.
    Then the following two properties hold: 
     \begin{enumerate}
     \item\label{Item:NCGlob} 
  For any open set  $U\subset M$, if $H\cap U$ is closed in $U$
 and admits a global cross-section $S$ with respect to $\leq _{U}$, then
 the synthetic null hypersurface $(H\cap U,\vol_{L}, G_{L,S})$ 
 in $X_{U}$ defined in \Cref{R:compatibility} satisfies 
 $\NC^e(n)$.

% \smallskip 
 \item\label{Item:NCLoc} 
 For any $x\in H$, there exists a neighborhood $U$ 
 of $x$ in $M$, such that
 the synthetic null hypersurface $(H\cap U,\vol_{L}, G_{L,S})$ in $X_{U}$
 defined in \Cref{R:compatibility-local}
  satisfies 
 $\NC^e(n)$.
 \end{enumerate}
 
\end{theorem}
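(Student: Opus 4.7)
The plan is to reduce everything to the smooth entropy-power concavity for null optimal transport along generators established in~\cite{CMM24a} (in particular Prop.~7.1 cited in the introduction). First observe that item~(\ref{Item:NCLoc}) follows from item~(\ref{Item:NCGlob}): given $x\in H$, pick a coordinate chart $U\ni x$ small enough that a time coordinate produces a spacelike cross-section $S:=H\cap U\cap \{t=t_0\}$ transverse to the null generators, the null geodesic flow $\Psi_L$ emanating from $S$ covers $H\cap U$, and $H\cap U$ is closed in $U$. The hypotheses of~(\ref{Item:NCGlob}) are then satisfied in the induced causal space $X_U$.

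For~(\ref{Item:NCGlob}), fix causally related $\mu_0,\mu_1\in\Prob(H\cap U)$ with $\Pi_\leq(\mu_0,\mu_1)\neq\emptyset$. The key structural observation is that, since $H\cap U$ is achronal in $X_U$, two points $x\leq y$ in $H\cap U$ necessarily lie on the same null generator with $G_{L,S}(y)\geq G_{L,S}(x)$. Hence every causal coupling is concentrated on pairs living on a common generator. Using the diffeomorphism $\Psi_L:S\times I\to H\cap U$, the transport problem disintegrates into a family of one-dimensional transport problems along the generators, indexed by $s\in S$, with base distributions on $S$ forced to coincide by conservation of mass along each generator.

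On each generator I would take the $G$-monotone rearrangement $T_s$ (well-defined since causal coupling forces $G$-monotonicity) and build the displacement interpolation at constant $G$-speed; assembling over $s\in S$ produces a dynamical plan $\nu$ whose composition with $G_{L,S}$ is affine in time along a.e.\ curve, so $\nu\in\OptGeo^{G_{L,S}}(\mu_0,\mu_1)$. The remaining task is to verify the entropy-power concavity~\eqref{E:concavity} with $N=n$. This is where the classical NEC enters: along each generator, the Raychaudhuri equation combined with $\Ric(L,L)\geq 0$ yields a concavity estimate for the transverse Jacobian of $\Psi_L$, which by Prop.~7.1 of~\cite{CMM24a} is equivalent to the $\U_{n-1}$-concavity of the one-dimensional displacement interpolation (with $\vol_L$ as reference). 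A Jensen-type argument over the base $S$ then converts the pointwise-on-generators concavity into the global inequality for $\U_{n-1}(\mu_t|\vol_L)$.

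The main obstacle is the careful bookkeeping between the synthetic formulation (involving $G$-causal curves and the rigged measure on $H\cap U$) and the Raychaudhuri-based computation in~\cite{CMM24a}: one must verify that $\vol_L$ disintegrates along generators in a way compatible with the Jacobian calculation, and that the constant-$G$-speed displacement built above coincides with the null geodesic interpolation used in~\cite{CMM24a}. Once this identification is in place, the one-dimensional concavity transfers verbatim and the aggregation via Jensen is routine.
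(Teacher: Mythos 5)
Your proof is correct and follows essentially the same route as the paper. The paper disposes of item~(\ref{Item:NCGlob}) by citing \cite[Theorem~1.6]{CMM24a} and reduces item~(\ref{Item:NCLoc}) to item~(\ref{Item:NCGlob}) via the local coordinate-chart construction of Remark~\ref{R:compatibility-local}, exactly as you describe. Your proposal usefully unpacks the content of that cited theorem (achronality forcing causal couplings onto generators, one-dimensional disintegration via $\Psi_L$, Raychaudhuri plus $\Ric(L,L)\geq 0$ yielding concavity of the transverse Jacobian, and aggregation by Jensen), but note a citation slip: the relevant result is Theorem~1.6 of~\cite{CMM24a}, not Proposition~7.1 of that paper, which concerns covariance of the $\NC^e$ condition under changes of gauge and measure rather than the NEC equivalence itself.
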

\begin{proof}
Part~\ref{Item:NCGlob} follows by~\cite[Theorem~1.6]{CMM24a}. The implication~\ref{Item:NCGlob} $\implies$~\ref{Item:NCLoc}:
is the content of \Cref{R:compatibility-local}.
\end{proof}

We now complement \Cref{thm:compatibilityNEC-NCe} by showing that the classical 
NEC is also compatible with the synthetic $\NC^e$ along achronal boundaries, 
without any further regularity assumption.  Recall that achronal 
boundaries  fall within the class of synthetic null hypersurfaces, see \Cref{R:compatibility-penrose}.

\begin{theorem}\label{T:compatibility-penrose}
  Let $(M,g)$ be a Lorentzian manifold of
  dimension $n$.
  Assume $M$ to be causally closed and strongly causal.
  Let $A\subset M$ be a $C^2$ achronal, space-like sub-manifold of
  codimension $k\geq 2$.
  Let $H:=\partial I^{+}(A)$.
    Assume that the classical NEC is valid on $H$, i.e.,
  $\Ric(w,w)\geq 0$, for any
  light-like vector $w\in TH$ at the differentiability points of $H$.

  Then the synthetic null hypersurface $(H,G,\mm)$, as defined in
  \Cref{R:compatibility-penrose}, satisfies $\NC^e(n)$.
\end{theorem}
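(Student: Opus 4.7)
The plan is to reduce \Cref{T:compatibility-penrose} to the smooth compatibility result \Cref{thm:compatibilityNEC-NCe} by exploiting the local diffeomorphism structure of the normal-bundle exponential map $\exp\colon B\to \noending{H}$ constructed in \Cref{R:compatibility-penrose}. The key initial observation is that $H=\partial I^{+}(A)$ is achronal by \Cref{P:achronal-boundary}, so for any causal coupling $\pi\in\Pi_\leq(\mu_0,\mu_1)$, each pair $(x_0,x_1)$ in $\supp\pi$ either satisfies $x_0=x_1$, or lies on a common null generator of $H$ issuing from $\initial\subset A$. Indeed, using $H=J^{+}(\initial)\setminus I^{+}(\initial)$ and the $C^2$ push-up property (see \Cref{R:compatibility-penrose}), any causal segment in $H$ is realized as the restriction of the null generator from $M^{-}(x_0)$ through $x_0$.

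Next I localize. Since $\exp\colon B\to \noending{H}$ is a bijection (its inverse $x\mapsto v(x)\in\normal_{M^{-}(x)}\initial$ is measurable by the non-focal-point condition) and a local diffeomorphism, the lift of $\supp\pi$ to $B$ is compact and can be covered by finitely many open sets $\hat W_1,\dots,\hat W_m\subset B$ on each of which $\exp$ is a diffeomorphism, chosen so that every generator segment supporting $\pi$-mass lies inside some $\hat W_i$. Select a disjoint measurable partition $\hat E_i\subset\hat W_i$ of the lift that respects generators (each generator segment in $\supp\pi$ lies in a single $\hat E_i$), and set $E_i:=\exp(\hat E_i)$. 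Decomposing $\mu_j=\sum_i\mu_j^i$ with $\mu_j^i:=\mu_j|_{E_i}$ and $c_i:=|\mu_0^i|=|\mu_1^i|$, the block-diagonal structure of $\pi$ gives causally coupled normalized pairs $(\mu_0^i/c_i,\mu_1^i/c_i)$ supported in $E_i\subset H_{W_i}:=\exp(\hat W_i)$. Each $H_{W_i}$ is a $C^2$ smooth null hypersurface admitting a smooth cross-section in $\initial$; by the construction of \Cref{R:compatibility-penrose}, the restrictions $\mm|_{H_{W_i}}$ and $G|_{H_{W_i}}$ coincide with the smooth data $(\vol_{L^{v_i}},G_{L^{v_i},S^{v_i}})$ of \Cref{R:compatibility} up to a per-generator affine reparametrization of the gauge, which preserves the set of $G$-causal curves. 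Thus \Cref{thm:compatibilityNEC-NCe} (global case) applies, yielding plans $\nu^i\in\OptGeo^G(\mu_0^i/c_i,\mu_1^i/c_i)$ such that $t\mapsto \U_{n-1}((e_t)_\sharp\nu^i|\mm)$ is concave.

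Define the global plan $\nu:=\sum_i c_i\nu^i\in\OptGeo^G(\mu_0,\mu_1)$. Because each generator stays in one $E_i$, the intermediate measures $\mu_t^i:=(e_t)_\sharp\nu^i/c_i$ have pairwise disjoint supports for all $t\in[0,1]$. Disjointness yields
\begin{equation*}
  \Ent((e_t)_\sharp\nu\,|\,\mm)=\sum_i c_i\log c_i + \sum_i c_i\,\Ent(\mu_t^i\,|\,\mm),
\end{equation*}
hence $\U_{n-1}((e_t)_\sharp\nu|\mm)=K\cdot\prod_i \U_{n-1}(\mu_t^i|\mm)^{c_i}$ with $K:=\prod_i c_i^{-c_i/(n-1)}$ constant in $t$. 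The concavity of each factor combined with the Brunn--Minkowski-type inequality
\begin{equation*}
  \prod_i\bigl((1-t)A_i+tB_i\bigr)^{c_i}\;\geq\;(1-t)\prod_i A_i^{c_i}+t\prod_i B_i^{c_i},
  \qquad A_i,B_i\geq 0,\ \sum_i c_i=1,
\end{equation*}
(provable via Young's inequality applied to $A_i/((1-t)A_i+tB_i)$ and $B_i/((1-t)A_i+tB_i)$) implies concavity of the weighted geometric mean; hence $t\mapsto \U_{n-1}((e_t)_\sharp\nu|\mm)$ is concave, which is~\eqref{E:concavity} with $N=n$.

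The main technical obstacle is the construction of the generator-adapted partition $\{\hat E_i\}$ in the second step: ensuring each generator segment in $\supp\pi$ lies entirely in a single $\hat E_i$ requires a measurable selection on $A\cap\initial$ together with the compactness of the lift. A cleaner alternative that bypasses this construction is to first establish the one-dimensional localization \Cref{T:localization} and reduce $\NC^{e}(n)$ along $H$ to a per-generator Raychaudhuri-type inequality, which follows directly from the classical NEC along each $C^2$-regular generator.
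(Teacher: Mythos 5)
Your ``cleaner alternative'' relegated to the closing sentence is, in fact, the route the paper actually takes: the argument disintegrates $\hat\mm$ on $\hat H$ along the generators of the normal light cone, pushes this disintegration forward through $\exp$ (which does not mix generators), invokes the Raychaudhuri-type inequality $(a_{L^v,\alpha})''+((a_{L^v,\alpha})')^2/(n-2)\le 0$ for the conditional densities coming from the smooth result of~\cite[Th.~5.11]{CMM24a}, glues the per-chart densities by uniqueness of the disintegration, and closes with the localization \Cref{T:localization}. You identified exactly the right mechanism but treated it as a footnote rather than the proof.

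Your main, partition-based strategy has gaps beyond the one you flag. The combination step is sound: the map $(x_1,\dots,x_m)\mapsto\prod_i x_i^{c_i}$ with $\sum_i c_i=1$ is concave and componentwise non-decreasing on $\R_{\ge 0}^m$, so concavity of each factor does pass to the weighted geometric mean. The decomposition step, however, has at least three problems. First, $\supp\pi$ need not be compact, since \Cref{D:syntheticnull} quantifies over all $\mu_0,\mu_1\in\Prob(H)$, so a finite cover of the $\exp$-lift of $\supp\pi$ by diffeomorphism charts does not exist in general, and you give no truncation-and-limit argument to reduce to the compact case. Second, even for compact supports, every \emph{entire} generator segment carrying $\pi$-mass must lie inside a single chart $\hat W_i$ admitting a global cross-section; the length of these segments is dictated by $\mu_0,\mu_1$ rather than by the geometry, so no Lebesgue-number argument lets you shrink the charts, while refining instead the time parameter $t$ and concatenating runs afoul of the existential structure (``there exists $\nu\in\OptGeo^G$'') of $\NC^e$, since the chosen plans on consecutive subintervals need not glue. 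Third, $H_v=\exp(U_v)$ for a small neighborhood $U_v$ of a nonzero $v\in\hat H$ does not meet $\initial$, so the claimed cross-section ``in $\initial$'' does not literally exist and a local cross-section must be manufactured separately. The disintegration route sidesteps all three issues uniformly via the Disintegration Theorem, which is precisely why the paper adopts it.
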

\begin{proof}
The claim is a consequence of \Cref{T:localization} and the constructions performed in \Cref{R:compatibility-penrose}, to which we refer for the notation. For the sake of brevity, we outline the argument, omitting some straightforward technical details. 

Recall the definition~\eqref{eq:defhatH} of $\hat{H}$ as a $C^2$-submanifold of $TM$ endowed with the measure $\hat\mm$ locally given by~\eqref{eq:defhatmmloc}. By its very definition, $\hat{H}$ is a ruled submanifold, generated by the future pointing light-like vectors $v\in \normal\initial$. It is convenient to parametrize such generators on a suitable set $Q$ of indices, that can be identified with a suitable measurable subset of $\hat H$.
  We disintegrate the measure $\hat\mm$ on $\hat{H}$ along such generators of $\hat H$, as
  follows
  \[
    \hat\mm
    =
    \int_{Q}
    \hat \mm_\alpha
    \,
    \qq(\de\alpha),
  \]
 where $\qq$ is a probability measure on $Q$, and each measure $\hat \mm_\alpha$ is concentrated on the future pointing half-line generated by $v_\alpha, \; \alpha \in Q$.
  Since the exponential map does not mix such generators, we can also
  disintegrate the measure $\mm$, as
  follows
  \begin{equation}
    \mm
    =
    \int_{Q}
    \exp_{\sharp}
    (\hat\mm_\alpha\llcorner_{B})
    \,
    \qq(\de\alpha).
  \end{equation}
  Fix a vector $v\in \hat H$. Consider the  $C^2$ null hypersurface $H_{v}$ defined in~\eqref{eq:defHv}
  and its rigged measure $\vol_{L^v}$. Since $H_{v}$ is $C^2$, then 
  $\Ric(w,w)\geq 0$ for all light-like $w\in TH_{v}$ in the 
  differentiability points of $H$. Moreover $H$ is a locally-Lipschitz sub-manifold, then a.e.\;differentiable. It follows that 
  \[\Ric(w,w)\geq 0, \quad \text{for all } w\in TH_{v}|_{H}.\]
  By disintegrating the rigged measure $\vol_{L^v}$ along the above family of generators, we get
  \[
    \vol_{L^v}
    =
    \int_{Q}
    \vol_{L^v,\alpha}
    \qq(\de\alpha)
    .
  \]
  For $\qq$-a.e.\;$\alpha\in Q$, the conditional measure $\vol_{L^v,\alpha}$ is absolutely continuous w.r.t.\;the
  1-dimensional Lebesgue measure on the corresponding generator, with density $a_{L^v,\alpha}$. 
  By~\cite[Theorem~5.11]{CMM24a}, such densities satisfy 
  \begin{equation}\label{eq:DiffIneqaLvalpha}
    (a_{L^v,\alpha})''(t)
    +
    \frac{((a_{L^v,\alpha})'(t))^2}{n-2}
    \leq
%    -\Ric_{x}(L^v,L^v)
    0
    ,
    \quad
    t=-g(W,(\exp|_{U_{v}})^{-1}(x))
    ,
    \quad
    \forall x\in H_{v}\cap H
    .
  \end{equation}
  Since locally the measure $\mm$ coincides with $\vol_{L^v}$, the uniqueness of the disintegration implies that the conditional measure $\exp_{\sharp}
    (\hat\mm_\alpha\llcorner_{B})$ has density $a_{L^v,\alpha}$, for $\qq$-a.e.\,$\alpha\in Q$.
  For $v_1,v_2\in \hat{H}$, one can check that 
  \[
  a_{L^{v_1},\alpha}(t)
  = 
  a_{L^{v_2},\alpha}(t)
  ,
  \qquad
  t=-g(W,u)
  ,
  \qquad
  u\in U_{v_1}\cap U_{v_2}
  .
  \]
  Therefore, the disintegrated measures can be glued, finding that $\exp_{\sharp}(\hat\mm_\alpha\llcorner_{B})$ is absolutely continuous, w.r.t.\;the Lebesgue
  measure, with density $a_{\alpha}$ satisfying the differential inequality~\eqref{eq:DiffIneqaLvalpha}. The conclusion now follows from \Cref{T:localization}.
  %
%  On $B$, this density satisfies $a_\alpha''\geq(a_\alpha')^2/(n-2)$.
  %
  %
\end{proof}

%%%%%%

\section{Equivalence relations for measures and gauges}\label{Sec:CovarianceGaugeMeas}

In the smooth setting, the measure $\mm_{L}$ and
the gauge $G_{L,S}$ depend on the choice of the
vector field $L$.
Therefore,  a priori \Cref{D:syntheticnull} might
depend on these choices as well.  One of the goals of our previous work~\cite{CMM24a} was to show the independence of the $\NC^e$ condition from such a choice. 
The goal of this section is to show such independence  also in this 
more general setting.

%\smallskip
In this section, we consider a given topological causal space 
$(X,\ll, \leq, \mathfrak{T})$ and
all the synthetic null hypersurfaces will be subsets of $X$.
%

%\smallskip
Over $\noending{H}$ 
we can define two equivalence relations, one for measures
and one for gauges. 
Firstly we recall the concept of transverse function that in
the smooth setting can be given by asking $f$ to be constant along 
the generators of the null hypersurface.
Since this characterization can be stated without referring to 
the smooth structure, it is natural to consider it also in our framework. 

\begin{definition}[Transverse function]
Let $H$ be a closed achronal set. 
A function $f:\noending{H}\to\R$ is transverse
if, for any causal curve $\gamma$ in $\noending{H}$, $f\circ \gamma$ is constant.
\end{definition}

%\smallskip

In the smooth setting, if $L_1,L_2\in\vfield(\normal H)$ are two null
geodesic vector fields, then $L_1=\phi L_2$ for some transverse
function $\phi$ and
the relation between the two rigged measures is $\vol_{L_1}=\frac{1}{\phi}
\vol_{L_2}$.
Accordingly, we identify measures which are equivalent up to multiplication by a transverse function.
\begin{definition}[Equivalence relation for measures]\label{D:equivm}
Let $H$ be a closed achronal set
and $\mm_1,\mm_2\in\M^{+}(H)$ be two measures.
% concentrated on $\noending{H}$. 
%
We set $\mm_1\sim\mm_2$ 
if and only if 
\[
\mm_1\llcorner_{H\setminus\noending{H}}
=
\mm_2\llcorner_{H\setminus\noending{H}} \quad \text{and}\quad 
\mm_1\llcorner_{\noending{H}}=f\mm_2\llcorner_{\noending{H}},
\]
for some
strictly positive transverse function $f$.
\end{definition}

Continuing the analogy with the smooth setting, 
the definition of
$G_{L,S}$ is  depending both on $L$ and $S$
but the only relevant aspect of the theory is the set $G$-causal curves.
We therefore introduce an equivalence relation also between gauges. 

\begin{definition}[Equivalence relation for gauges]\label{D:equivG}
  Let $H$ be a closed achronal set and 
  let $G_1,G_2:\noending{H}\to\R$ be two gauges. 
  We set
  $G_1\sim G_2$ if and only if $\Caus_{G_1}=\Caus_{G_2}$ (see 
  \Cref{D:Gcausalcurve}).
\end{definition}

Next we prove that 
the $\NC^e(N)$ condition
is well defined for equivalence classes of gauges and reference measures, i.e.,\ the $\NC^e(N)$ condition
does not
depend on the choice of the measure and the gauge inside the respective
equivalence classes.

\begin{proposition}\label{lem:integral-transverse}
  Let $H$ be a closed achronal set, let $\nu\in\OptCaus^H(\mu_0,\mu_1)$ for a
  pair of  probability measures $\mu_0,\mu_1\in \Prob(H)$ which are 
  concentrated on $\noending{H}$. 
  
  Set $\mu_{t}:=(e_{t})_{\sharp}\nu$. Then, for every transverse function   $f:\noending{H}\to\R$, it holds that
  \begin{equation}
    \label{eq:integral-transverse}
    \int_{H}
    f
    \,\de\mu_{t}
    =
    \int_{H}
    f
    \,\de\mu_{s}
    ,
    \qquad
    \text{ for all }
    t,s\in[0,1]
    .
  \end{equation}

  If moreover $f$ is non-negative and the integrals
  above are equal to $1$,
  defining $\tilde \mu_{t}:= f\mu_{t}$,
  then 
  $\tilde\nu:=(f\circ e_0)\, \nu \in\OptCaus^H(\tilde\mu_0,\tilde\mu_1)$ 
  and $\tilde\mu_{t}=(e_{t})_{\sharp}\tilde\nu$.
\end{proposition}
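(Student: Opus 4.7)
The plan is to reduce everything to a single observation: because $f$ is transverse and $\mu_0,\mu_1$ are concentrated on $\noending{H}$, for $\nu$-a.e.\ curve $\gamma$ the function $f\circ\gamma$ is well-defined and constant on $[0,1]$. Once this is established, both conclusions follow by elementary manipulations with push-forwards.

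First I would address the domain issue. Since $(e_0)_\sharp\nu=\mu_0$ and $(e_1)_\sharp\nu=\mu_1$ are concentrated on $\noending{H}$, for $\nu$-a.e.\ $\gamma\in\Caus^H$ we have $\gamma_0,\gamma_1\in \noending{H}$. By \Cref{R:intervalH}, up to reparametrizing (which does not affect the $\nu$-measure of any evaluation event), we may assume $\gamma_{(0,1)}\subset \noending{H}$ for $\nu$-a.e.\ $\gamma$, so in fact $\gamma([0,1])\subset \noending{H}$. Thus $f\circ\gamma:[0,1]\to\R$ is defined $\nu$-a.s.\ and, by the transverse property of $f$, it is constant on $[0,1]$. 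In particular $f(\gamma_t)=f(\gamma_s)$ for all $s,t\in[0,1]$, for $\nu$-a.e.\ $\gamma$.

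For the first assertion I would just compute, for $t,s\in[0,1]$:
\begin{equation*}
  \int_H f\,\de\mu_t
  =\int_{C([0,1];H)} f(\gamma_t)\,\de\nu(\gamma)
  =\int_{C([0,1];H)} f(\gamma_s)\,\de\nu(\gamma)
  =\int_H f\,\de\mu_s,
\end{equation*}
where the middle equality uses the above $\nu$-a.e.\ constancy of $f\circ\gamma$.

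For the second assertion, assume $f\geq 0$ and $\int_H f\,\de\mu_0=1$ (equivalently, by the first part, $\int_H f\,\de\mu_t=1$ for every $t$). The measure $\tilde\nu:=(f\circ e_0)\,\nu$ has total mass $\int f(\gamma_0)\,\de\nu(\gamma)=\int f\,\de\mu_0=1$, and since $\tilde\nu\ll\nu$ it is concentrated on $\Caus^H\subset C([0,1];H)$. For any bounded Borel test function $\phi:H\to\R$ and $t\in[0,1]$, using the constancy of $f\circ\gamma$ we get
\begin{equation*}
  \int_H \phi\,\de(e_t)_\sharp\tilde\nu
  =\int \phi(\gamma_t)f(\gamma_0)\,\de\nu
  =\int \phi(\gamma_t)f(\gamma_t)\,\de\nu
  =\int_H \phi f\,\de\mu_t
  =\int_H \phi\,\de\tilde\mu_t.
\end{equation*}
Taking $t=0,1$ yields $(e_i)_\sharp\tilde\nu=\tilde\mu_i$, so $\tilde\nu\in\OptCaus^H(\tilde\mu_0,\tilde\mu_1)$, and for general $t$ the identity $(e_t)_\sharp\tilde\nu=\tilde\mu_t$ follows directly.

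The only delicate point is the domain issue in the first paragraph: one must verify that the ``WLOG'' reparametrization of \Cref{R:intervalH} is compatible with the fact that we integrate $f\circ\gamma$ against a fixed $\nu$. This is harmless because the values $f(\gamma_t)$ we actually use in the displayed identities only depend, through transversality, on the trace $\gamma([0,1])$, not on its parametrization; alternatively one can restrict $\nu$ to the Borel set of $\gamma$'s with $\gamma([0,1])\subset\noending{H}$, which has full $\nu$-measure. Everything else is routine push-forward bookkeeping.
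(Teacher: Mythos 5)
Your proof is correct and follows essentially the same route as the paper: both reduce everything to the observation that $f\circ\gamma$ is constant for $\nu$-a.e.\ $\gamma$, then finish by push-forward bookkeeping. Your detour through the reparametrization in \Cref{R:intervalH} is unnecessary (and you correctly flag and sidestep it yourself): for a causal $\gamma$ with $\gamma_0,\gamma_1\in\noending{H}$, the inclusion $\gamma([0,1])\subset\noending{H}$ already follows directly from the definitions of $\initial$, $\final$ and causality, so the full-measure Borel set you restrict to is available with no WLOG.
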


\begin{proof}
  Since
  $f$ is transverse, then $f\circ\gamma$ is constant for all
  $\gamma\in \Caus_{H}$, such that $\gamma_{[0,1]}\subset\noending{H}$,
  therefore, since  $\nu(\Caus_{H})=1$,
   we can compute
  \begin{equation*}
    \int_{H}
    f
    \,\de\mu_{t}
    -
    \int_{H}
    f
    \,\de\mu_{s}
    =
    \int_{C([0,1];H)}
    (
    f\circ e_{t}-
    f\circ e_{s}
    )
    \,
    \de \nu
    =0
    .
  \end{equation*}
For the second part, a direct computation gives
\begin{equation*}
    \tilde\mu_{t}=f\mu_{t}
    =
    f(e_{t})_{\sharp}(\nu)
    =
    (e_{t})_{\sharp}(f\circ e_{t} \nu)
    =
    (e_{t})_{\sharp}(f\circ e_0 \nu)
    =
    (e_{t})_{\sharp}(\tilde\nu).
    \qedhere
  \end{equation*}
\end{proof}

\begin{proposition}\label{P:convexity-invariance}
Let $H$ be a closed achronal set, $\nu\in\OptCaus^H(\mu_0,\mu_1)$ for a pair of  probability measures $\mu_0,\mu_1\in \Prob(H)$ concentrated on $\noending{H}$ and  
let $\mm_1,\mm_2\in\M^{+}({H})$  such that $\mm_1\sim\mm_2$.

Then the function $t\mapsto \Ent(\mu_{t}|\mm_1)-\Ent(\mu_{t}|\mm_2)$ is constant.
\end{proposition}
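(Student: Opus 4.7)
The plan is to reduce the claim to \Cref{lem:integral-transverse} by identifying the difference of entropies with the $\mu_t$-integral of a transverse function. First I would argue that $\mu_t$ is concentrated on $\noending{H}$ for every $t \in [0,1]$: by hypothesis $\mu_0$ and $\mu_1$ live on $\noending{H}$, so $\nu$-a.e.\;causal curve $\gamma$ has endpoints in $\noending{H}$, and \Cref{R:intervalH} ensures that for non-constant $\gamma$ all intermediate points also lie in $\noending{H}$ (constant curves cause no issue). Hence $\mu_t(\noending{H}) = 1$ for every $t$.

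Writing $\mm_1\llcorner_{\noending{H}} = f\,\mm_2\llcorner_{\noending{H}}$ with $f > 0$ transverse, strict positivity of $f$ together with $\mu_t(\noending{H}) = 1$ implies that $\mu_t \ll \mm_1$ if and only if $\mu_t \ll \mm_2$. If both entropies are $+\infty$ the claim is vacuous; otherwise, setting $\rho_t := \de\mu_t/\de\mm_2$, we have $\de\mu_t/\de\mm_1 = \rho_t/f$ on $\noending{H}$, and a direct manipulation of the defining integrals yields
\[
\Ent(\mu_t|\mm_1) - \Ent(\mu_t|\mm_2) = -\int_H \log f \,\de\mu_t.
\]
Since $f > 0$ is transverse, $\log f$ is a real-valued transverse function on $\noending{H}$, so \Cref{lem:integral-transverse} applied to $\log f$ gives that the right-hand side is independent of $t$, which is the desired conclusion.

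The main technical point is verifying that $\log f$ is $\mu_t$-integrable whenever both entropies are finite, so that the above identities truly hold as equalities of real numbers rather than formal sums in $[-\infty,+\infty]$. This follows by splitting $\log f$ into its positive and negative parts and invoking the elementary inequality $x \log x \geq -e^{-1}$ together with the hypothesized integrability of $(\rho_t \log \rho_t)_+$ with respect to both $\mm_1$ and $\mm_2$; once this is in hand, the application of \Cref{lem:integral-transverse} (which also required integrability) is immediate.
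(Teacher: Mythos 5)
Your proof is correct and takes essentially the same approach as the paper's: the paper also writes $\rho_{2,t}=f\rho_{1,t}$, derives $\Ent(\mu_{t}|\mm_{2})-\Ent(\mu_{t}|\mm_{1})=\int_H\log f\,\de\mu_{t}$, and concludes by Proposition~\ref{lem:integral-transverse}. The paper leaves implicit the supporting checks you spell out (that $\mu_t$ stays concentrated on $\noending{H}$, that absolute continuity w.r.t.\ $\mm_1$ and $\mm_2$ are equivalent, and the integrability of $\log f$).
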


\begin{proof}
  Let  $f$ be any transverse function such that 
  $\mm_1 = f \mm_2$ and
   $\rho_{k,t}$ be the density of $\mu_{t}$ w.r.t.\ 
   the measure
  $\mm_{k}$, $k=1,2$.
  Since $\rho_{2,t}=\rho_{1,t}f$, a direct
  computation gives
  \begin{align*}
    \Ent(\mu_{t}|\mm_{2})
    &
    =
    \int_{H}
    \log(\rho_{2,t})
    \,
    \de\mu_{t}
    =
    \int_{H}
    \log(\rho_{1,t})
    \,
    \de\mu_{t}
    +
      \int_{H}
    \log f
    \,
      \de\mu_{t}
    \\
    &
      =
      \Ent(\mu_{t}|\mm_{1})
    +
      \int_{H}
    \log f
    \,
    \de\mu_{t}
      \stackrel{\text{\eqref{eq:integral-transverse}}}{=}
      \Ent(\mu_{t}|\mm_{1})
    +
      \int_{H}
    \log f
    \,
    \de\mu_0
      .
      \qedhere
  \end{align*}
\end{proof}

The next result follows straightforwardly from the propositions above.

\begin{theorem}\label{T:invariance-NC-locally-compact}
Let $N>0$ and $(X,\ll, \leq, \mathfrak{T})$ be a topological causal space.
Consider a closed achronal set $H$,
two measures $\mm_1,\mm_2\in\M^+(H)$ and two \ascar\ gauges $G_1,G_2$ 
(recall~\eqref{eq:ascoli-arzela}).

If $\mm_1\sim \mm_2$ and $G_1\sim G_2$, 
then $(H,G_1, \mm_1)$ satisfies the $\NC^e(N)$ condition
  if and only if $(H,G_2, \mm_2)$ does.
\end{theorem}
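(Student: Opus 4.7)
The plan is to derive the theorem as a direct corollary of the covariance results established earlier in this section. By symmetry it suffices to prove the forward implication: suppose $(H,G_1,\mm_1)$ satisfies $\NC^e(N)$, and fix $\mu_0,\mu_1\in\Prob(H)$ with $\Pi_\leq(\mu_0,\mu_1)\neq\emptyset$.

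The first step is a direct observation. By \Cref{D:equivG}, $G_1\sim G_2$ means $\Caus_{G_1}=\Caus_{G_2}$, so the $G$-causal plans coincide: $\OptGeo^{G_1}(\mu_0,\mu_1)=\OptGeo^{G_2}(\mu_0,\mu_1)$. Thus any witness $\nu$ for the concavity \eqref{E:concavity} under $\mm_1$ is automatically an admissible candidate for $\mm_2$. The second step uses the local compactness hypothesis together with \Cref{P:extrimal-negligible} to obtain $\mm_i(\initial)=\mm_i(\final)=0$; combined with $\mm_i(\static)=0$ from the very definition of synthetic null hypersurface, this gives $\mm_i(H\setminus\noending{H})=0$ for $i=1,2$. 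Since $\mm_1\sim\mm_2$ (\Cref{D:equivm}) means $\mm_1=f\mm_2$ on $\noending{H}$ for a strictly positive transverse $f$, the two measures share the same $\sigma$-ideal of null sets; hence $\mu\ll\mm_1\Longleftrightarrow\mu\ll\mm_2$, and any such $\mu$ is concentrated on $\noending{H}$.

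The third step transfers concavity via \Cref{P:convexity-invariance}. By \Cref{D:Gcausalcurve}, every $G$-causal curve satisfies $\gamma_{(0,1)}\subset\noending{H}$, so $\mu_t=(e_t)_\sharp\nu$ is concentrated on $\noending{H}$ for all $t\in(0,1)$. If both $\mu_0,\mu_1$ belong to $\Dom(\Ent(\cdot|\mm_1))$, then by the previous reduction they are both concentrated on $\noending{H}$, and \Cref{P:convexity-invariance} yields a constant $c\in\R$ with $\Ent(\mu_t|\mm_2)-\Ent(\mu_t|\mm_1)\equiv c$ in $t$. Equivalently, $\U_{N-1}(\mu_t|\mm_2)=e^{-c/(N-1)}\,\U_{N-1}(\mu_t|\mm_1)$; multiplying \eqref{E:concavity} for $\mm_1$ by the positive constant $e^{-c/(N-1)}$ delivers the corresponding inequality for $\mm_2$. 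When one (or both) of $\mu_0,\mu_1$ lies outside $\Dom(\Ent(\cdot|\mm_1))$, the identification of null sets established in Step 2 ensures that the corresponding term in \eqref{E:concavity} vanishes simultaneously under $\mm_1$ and under $\mm_2$, and the scaling argument carries over to the remaining non-trivial terms.

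I expect the main obstacle to be the bookkeeping of degenerate scenarios where the transverse function $f$ is unbounded or $\log f$ fails to be $\mu_0$-integrable. In such cases the constant $c$ produced by \Cref{P:convexity-invariance} may be $\pm\infty$, but a careful examination shows that both sides of \eqref{E:concavity} for $\mm_2$ then degenerate simultaneously through vanishing or blow-up of $\U_{N-1}$, so the required inequality remains valid in a trivial sense; combining this with the generic case above closes the proof.
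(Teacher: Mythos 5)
Your proof is correct and follows the same structure as the paper's: apply \Cref{P:extrimal-negligible} (available thanks to the local compactness hypothesis) to reduce to measures concentrated on $\noending{H}$, and then invoke \Cref{P:convexity-invariance} to transfer the entropy concavity from $\mm_1$ to $\mm_2$, noting that $G_1\sim G_2$ makes the $G$-causal plans identical so the same witness $\nu$ works. Your additional bookkeeping around the degenerate cases (unbounded $f$, non-integrable $\log f$) is more detail than the paper's own terse proof provides, but the overall route is the same.
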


\begin{proof}
  Assume $(H,G_1,\mm_1)$ to be $\NC^e(N)$.
  By~\Cref{P:extrimal-negligible},
  $\mm_1(H\setminus \noending{H})=0$, thus
  $\mm_2(H\setminus \noending{H})=0$.
  This means that the only relevant probability measures for the
  $\NC^e(N)$ condition are the ones concentrated on $\noending{H}$.
  Then \Cref{P:convexity-invariance} implies that also $(H,G_2,\mm_2)$
  satisfies $\NC^e(N)$.
\end{proof}

%%%%%%%%%%%%%%%%%%%%%Fine della costruzione delle strutture null e gauges
\section{The geometry of synthetic null hypersurfaces}\label{S:decomposition-null-hypersurface}

This section investigates more in details the geometry of synthetic null hypersurfaces, starting with relations between gauges and causal curves. 

In this section we will often assume the closed achronal set to be weakly convex;  this assumption is justified by \Cref{R:absolutelycontinuous}.

\begin{proposition}\label{P:G-parametrization}
  Let $H$ be a closed achronal set, $G$ be a gauge and $\gamma : [0,1] \to H$ be a
  causal curve.
  Then there exists a reparametrization of $\gamma$ which is
  $G$-causal.
\end{proposition}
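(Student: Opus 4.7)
The plan is to reparametrize $\gamma$ through a generalized inverse of $\phi := G \circ \gamma$, so that the resulting composition $G \circ \tilde\gamma$ becomes affine on $(0,1)$. First, I would invoke \Cref{R:intervalH} to reduce to the case in which $\gamma_{(0,1)} \subset \noending{H}$. The remark following \Cref{defn:gauge} then ensures that $\phi : (0,1) \to \R$ is continuous and non-decreasing, while condition~(2) of \Cref{defn:gauge} makes $\phi$ bounded; set $a := \inf_{(0,1)} \phi$ and $b := \sup_{(0,1)} \phi$. If $a = b$, then $G \circ \gamma|_{(0,1)}$ is constant (in particular affine), so $\gamma$ itself is $G$-causal and there is nothing to prove.

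Assume therefore $a < b$. For each $s \in (0,1)$, the intermediate value theorem together with continuity and monotonicity of $\phi$ makes $\phi^{-1}(a + s(b-a))$ a non-empty closed sub-interval of $(0,1)$; set $\tau(s) := \inf \phi^{-1}(a + s(b-a))$ and define
\[
  \tilde\gamma_s := \gamma_{\tau(s)}, \quad s \in (0,1), \qquad \tilde\gamma_0 := \gamma_0, \quad \tilde\gamma_1 := \gamma_1.
\]
The map $\tau$ is non-decreasing, so causality of $\gamma$ transfers to $\tilde\gamma$. By construction, $G \circ \tilde\gamma|_{(0,1)}$ coincides with the affine map $s \mapsto a + s(b-a)$, and $\tilde\gamma_{(0,1)} \subset \gamma_{(0,1)} \subset \noending{H}$. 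Thus the only remaining issue is continuity of $\tilde\gamma$, in particular at values $s$ for which $\phi^{-1}(a + s(b-a))$ is a non-trivial interval, and at the two endpoints $s = 0, 1$.

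Continuity hinges on the following key claim: if $\phi \equiv c$ on an interval $[\alpha,\beta] \subset (0,1)$, then $\gamma$ is constant on $[\alpha,\beta]$. Granting the claim, whenever $s_n \to s_\star \in (0,1)$, every subsequential limit of $\tau(s_n)$ lies in the closed interval $\phi^{-1}(a + s_\star(b-a))$, on which $\gamma$ is constant; continuity of $\gamma$ then gives $\tilde\gamma_{s_n} \to \tilde\gamma_{s_\star}$. Continuity at the endpoints $s = 0, 1$ is handled by the same device, using the limits of the (extended, monotone) function $\phi$ at $0$ and $1$.

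The main obstacle is proving the claim. Suppose for contradiction that $\phi \equiv c$ on $[\alpha,\beta]$ yet $\gamma_\alpha \neq \gamma_\beta$. Then $\sigma := \gamma|_{[\alpha,\beta]}$ is a causal curve in $\noending{H}$ joining distinct endpoints, and its compact image $K := \sigma([\alpha,\beta])$ inherits a total preorder from $\leq$: for $x = \sigma(u)$ and $y = \sigma(v)$ in $K$, either $u \leq v$ or $v \leq u$, whence causality of $\sigma$ gives $x \leq y$ or $y \leq x$. The strategy is to extract from $K$ an injective causal sub-curve joining $\gamma_\alpha$ to $\gamma_\beta$: first apply the Hahn--Mazurkiewicz arc-reduction theorem in the compact, path-connected, metrizable space $K$ to produce an arc from $\gamma_\alpha$ to $\gamma_\beta$, and then reparametrize this arc monotonically with respect to the induced total preorder to obtain an injective causal curve in $\noending{H}$. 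Condition~(1) of \Cref{defn:gauge} then forces $G(\gamma_\alpha) < G(\gamma_\beta)$, contradicting $G \equiv c$ on $K$. Reconciling the topological arc with the causal preorder is the delicate step of the argument.
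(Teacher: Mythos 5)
Your high-level strategy coincides with the paper's: reduce via \Cref{R:intervalH} to $\gamma_{(0,1)}\subset \noending H$, then invert $\phi = G\circ\gamma$. The paper then declares ``assuming $\gamma|_{(0,1)}$ injective is not restrictive''; with injectivity, condition~(1) of \Cref{defn:gauge} makes $\phi$ \emph{strictly} increasing, so the inverse is an ordinary continuous inverse and the proof is over in a line. You decline this reduction, and the entire burden of the argument then shifts to your ``key claim,'' which you need in order to establish continuity of $\tilde\gamma$ across the flats of $\phi$.

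The argument you sketch for the key claim has a genuine gap. The Hahn--Mazurkiewicz arc in $K=\gamma([\alpha,\beta])$ from $\gamma_\alpha$ to $\gamma_\beta$ is a purely topological object; nothing forces it to be a causal curve. The total preorder on $K$ induced by $\leq$ does not make the arc $\leq$-monotone: as a path, the arc may visit points of $K$ out of causal order (forward, then backward, then forward). Your remedy --- ``reparametrize this arc monotonically with respect to the induced total preorder'' --- is not an available operation: a reparametrization of an arc is a homeomorphism of the parameter interval, and it cannot change the order in which the arc traverses the points of its image. If the arc is not already $\leq$-monotone, no reparametrization will turn it into an injective \emph{causal} curve, and condition~(1) of \Cref{defn:gauge} cannot be invoked to produce the desired contradiction. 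In effect, what you need is a way to extract an injective causal sub-curve of $\gamma|_{[\alpha,\beta]}$ joining $\gamma_\alpha$ to $\gamma_\beta$ --- which is precisely the content of the injectivity reduction the paper assumes at the outset and that you chose to bypass. You have avoided the paper's WLOG in name but not in substance, and the Hahn--Mazurkiewicz route as written does not close the gap.
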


\begin{proof}
By \Cref{R:intervalH}, 
we can assume  that 
$[0,1]\backslash\gamma^{-1}(\noending{H})\subset\{0,1\}$. 
Also assuming
$\gamma|_{(0,1)}$ injective is not restrictive.
Define $l(t):=G(\gamma_{t})$ and notice that by construction $l$ is strictly increasing.
  % 
%therefore $l|_{(0,1)}$ is uniformly continuous.
%
Let $(a,b):=l((0,1))$
and $m:(a,b)\to[0,1]$ be the inverse of $l|_{(0,1)}$.
Define $\eta:=\gamma\circ m:(a,b)\to H$.
This function is continuous.
  At this point extend $\eta$ by declaring that $\eta_{a}:=\gamma_0$ and
  $\eta_{b}=\gamma_1$.
  To check that this function satisfies the desired properties is trivial.
\end{proof}

\begin{example}
  Consider the Minkowski space $\R^2$ ($t$ is the time variable, $x$
  is the space variable).
  Let $H$ be the future light cone.
  Let $G:H\to \R$ given by $G(x,t)=\log t$.
  This function fails hypothesis (2) in the definition of gauge.

  Let $\gamma:[0,1]\to H$ be given by $\gamma_{r}=(r,r)$.
  There is no reparametrization of $\gamma$ such that its
  post-composition with $G$ is affine.
\end{example}

It is then immediate to obtain the following

\begin{corollary}
Let $H$ be a weakly convex closed achronal set, $G$ be a gauge and
  $x,y\in {H}$.
If $x\leq y$, $x\neq y$, then there exists a $G$-causal curve connecting $x$ to $y$.
\end{corollary}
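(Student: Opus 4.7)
The plan is to combine weak convexity with \Cref{P:G-parametrization}. First, since $H$ is weakly convex and $x\leq y$, by definition there exists a causal curve $\gamma:[0,1]\to H$ with $\gamma_0 = x$ and $\gamma_1 = y$. Because $x\neq y$, the curve $\gamma$ is non-constant.

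Next, by \Cref{P:G-parametrization}, there exists a reparametrization $\eta$ of $\gamma$ which is $G$-causal. Inspecting the construction in that proof, $\eta$ is defined on an interval $[a,b]\subset\R$ with $\eta_a = \gamma_0 = x$ and $\eta_b = \gamma_1 = y$. Post-composing $\eta$ with the increasing affine bijection $[0,1]\to[a,b]$ yields a curve, still denoted $\eta$, defined on $[0,1]$ with $\eta_0 = x$, $\eta_1 = y$. Since affine reparametrization of the domain preserves both the causal ordering along the curve and the affine character of $G\circ \eta|_{(0,1)}$, and leaves the image (hence the property $\eta_{(0,1)}\subset \noending{H}$) unchanged, the reparametrized $\eta$ is a $G$-causal curve connecting $x$ to $y$ in the sense of \Cref{D:Gcausalcurve}.

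There is essentially no obstacle here: the content is packaged in \Cref{P:G-parametrization}, which handles the reparametrization to make $G\circ\gamma$ affine, while weak convexity supplies the initial causal curve. The only minor bookkeeping is the affine rescaling of the domain to $[0,1]$ to match the standing convention.
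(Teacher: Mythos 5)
Your proof is correct and follows exactly the route the paper has in mind: the paper states the corollary follows immediately from \Cref{P:G-parametrization} combined with weak convexity, which is precisely what you do. One trivial terminological slip: when rescaling the domain from $[a,b]$ to $[0,1]$ you \emph{pre}-compose $\eta$ with the affine bijection $\phi:[0,1]\to[a,b]$ (to form $\eta\circ\phi$), not post-compose; but your intent is clear and the mathematical content is unaffected, and in any case the paper explicitly allows $G$-causal curves with domains other than $[0,1]$ by its stated abuse of notation.
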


\begin{proposition}\label{P:not-causally-related}
Let $H$ be a weakly convex closed achronal set and $G$ be a gauge for $H$.
Let $x,y\in \noending{H}$ be such that $G(x)=G(y)$.

Then either $x=y$ or $x$ and $y$ are not causally related. 
\end{proposition}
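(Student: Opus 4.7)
The plan is to argue by contradiction. Suppose $x \neq y$ and, by the symmetry of the statement in $x,y$, assume without loss of generality $x \leq y$.

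First, I would use weak convexity of $H$ to produce a causal curve $\gamma: [0,1] \to H$ from $x$ to $y$, which is non-constant since $x \neq y$. The next step is to upgrade $\gamma$ to an \emph{injective} causal curve whose entire image sits in $\noending{H}$. For the containment in $\noending{H}$: \Cref{R:intervalH} allows us to assume $\gamma_{(0,1)} \subset \noending{H}$, and since by hypothesis $x, y \in \noending{H}$, this upgrades to $\gamma([0,1]) \subset \noending{H}$. For the injectivity I would invoke the same topological ``loop-removal'' reduction that is used implicitly in the proof of \Cref{P:G-parametrization} (where the authors assert ``assuming $\gamma|_{(0,1)}$ injective is not restrictive''): within the image of any continuous curve joining two distinct points in a Hausdorff space one can select an arc, and here causality is preserved because the order $\leq$ along the arc is inherited from the monotone parameter of the original causal $\gamma$. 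If necessary, one further trims by taking the last parameter at which $x$ is visited and the first subsequent parameter at which $y$ is visited, so that the endpoints are not repeated in the interior.

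Once such an injective causal curve $\gamma: [0,1]\to \noending{H}$ from $x$ to $y$ is available, property~(1) in the definition of gauge (\Cref{defn:gauge}) yields that $G\circ\gamma$ is strictly increasing and continuous on $[0,1]$. In particular
\[
    G(x) \;=\; G(\gamma_0) \;<\; G(\gamma_1) \;=\; G(y),
\]
which contradicts the assumption $G(x) = G(y)$. Hence no such pair $x \neq y$ with $x \leq y$ and $G(x)=G(y)$ can exist, and the claim follows.

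The main obstacle I expect is the rigorous justification of the injective-sub-curve extraction, which the paper treats as routine. The topological extraction of an arc is classical, but the delicate point is to check that the inherited parametrization is still causal with respect to $\leq$; this uses that the original $\gamma$ induces a total preorder on its image (via the parameter) compatible with $\leq$, so that any sub-arc traversed in the same direction remains causal. Once this reduction step is granted, the proof is immediate from the strict monotonicity clause in the definition of gauge.
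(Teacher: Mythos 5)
Your proof is correct and follows the same route as the paper: reduce to $x\leq y$ with $x\neq y$, use weak convexity to obtain a causal curve from $x$ to $y$ lying in $\noending{H}$, upgrade it to an injective causal curve, and then invoke property~(1) of \Cref{defn:gauge} to get $G(x)<G(y)$, contradicting $G(x)=G(y)$. The injectivity upgrade that you flag as the delicate point is also the step the paper leaves implicit, disposing of it with the single phrase ``up to reparametrization, we can assume to be injective,'' so your treatment is no less detailed than the original.
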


\begin{proof}
Assume that $x\neq y$
and, without loss of
generality, that $x\leq y$.
Then by weakly convexity of $H$ there exists $\gamma$, a causal
curve connecting $x$ to $y$, that, up to reparametrization, we can assume to be injective.
Then $G\circ \gamma$ is strictly increasing
giving a contradiction.
\end{proof}

\begin{proposition}\label{P:Hcausal}
  Let $H$ be a weakly convex closed achronal set admitting a gauge $G$.
  Then $H$ is causal, that is $\leq$ is a partial order relation over $H$ (or, equivalently, $H$ does not contain  causal loops).
\end{proposition}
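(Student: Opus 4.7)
The plan is to argue by contradiction. Suppose $\leq$ restricted to $H$ fails antisymmetry, so there exist distinct $x,y\in H$ with $x\leq y$ and $y\leq x$; the goal is to derive a contradiction from the existence of a gauge.

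First I would observe that both $x$ and $y$ lie in $\noending{H}$. From $x\leq y$ with $x\neq y$ the element $y$ witnesses $x\notin\final$ and $x$ witnesses $y\notin\initial$; symmetrically, $y\leq x$ with $y\neq x$ gives $y\notin\final$ and $x\notin\initial$. Hence $x,y\in H\setminus(\initial\cup\final)=\noending{H}$, and in particular both are in the domain of the gauge $G$.

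Next I would invoke weak convexity to produce causal curves $\gamma_1:[0,1]\to H$ from $x$ to $y$ and $\gamma_2:[0,1]\to H$ from $y$ to $x$, and verify that the image of each of them stays inside $\noending{H}$. Indeed, if $\gamma_1(t)\in\initial$ for some $t\in(0,1]$, then $\gamma_1(0)=x\leq\gamma_1(t)$ combined with $\gamma_1(t)$ being an initial element forces $x=\gamma_1(t)\in\initial$, contradicting $x\notin\initial$; the same reasoning using the endpoint $\gamma_1(1)=y\notin\final$ rules out $\gamma_1(t)\in\final$ for any $t\in[0,1)$. The same applies verbatim to $\gamma_2$. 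Applying the non-decreasing property of $G$ along causal curves in $\noending{H}$ (a consequence of axiom (1) of \Cref{defn:gauge}, as remarked right after the definition) to $\gamma_1$ and to $\gamma_2$ yields $G(x)\leq G(y)$ and $G(y)\leq G(x)$, whence $G(x)=G(y)$.

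Finally I would invoke \Cref{P:not-causally-related}: since $x,y\in\noending{H}$ and $G(x)=G(y)$, either $x=y$ or $x$ and $y$ are not causally related. Both alternatives contradict our standing hypothesis ($x\neq y$ by choice, and $x\leq y$ means they are causally related), closing the argument.

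The main obstacle is the (minor) technical point that $\noending{H}$ need not be open, so a priori the causal curves produced by weak convexity could stray into $\initial\cup\final$. The short initial--final analysis above handles this cleanly by using that the endpoints are already in $\noending{H}$; once the curves are confirmed to remain in $\noending{H}$, the gauge axioms and \Cref{P:not-causally-related} do all the remaining work.
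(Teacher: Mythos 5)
Your proof is correct and follows essentially the same strategy as the paper's: establish that the relevant points lie in $\noending{H}$, conclude that $G$ takes equal values on two causally related distinct points, and then invoke \Cref{P:not-causally-related} to derive the contradiction. The only cosmetic differences are that you phrase the contradiction hypothesis as a failure of antisymmetry of $\leq$ (rather than as a non-constant causal loop) and that you rely only on the non-decrease of $G$ along causal curves in $\noending{H}$ (the remark after \Cref{defn:gauge}), whereas the paper also passes through \Cref{P:G-parametrization} to get a $G$-causal (affine) reparametrization--a step that is not strictly needed for this argument.
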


\begin{proof}
  Assume by contradiction the existence of a non-constant causal curve 
  $\gamma : [0,1] \to H$ with $\gamma_0= \gamma_1$. 
  Without loss of generality $\gamma_0 \neq \gamma_{1/2}$ and $\gamma_{1/2} \neq \gamma_1$. 
  By definition of end points, 
  $\gamma([0,1])\subset \noending{H}$
  and by \Cref{P:G-parametrization} $\gamma$ can be assumed to be $G$-causal.
  
  Since $\gamma_0=\gamma_1$ and $G\circ \gamma$ is monotone, then $G\circ\gamma$ is constant. Thus
  $\gamma$ itself is constant by \Cref{P:not-causally-related}, giving a
contradiction.
\end{proof}

\subsection{Null non-branching and first properties}

Assuming that branching phenomena do not occur within $H$ permits to deduce much more on the structure of the achronal set. 
We start giving the definition of null non-branching.

\begin{definition}
Let $H\subset X$ be a closed achronal set. We say that $H$  is \emph{forward null non-branching} if the following holds. 
For every pair of injective
causal curves $\gamma^{1},\gamma^{2} : [0,1] \to
H$, such that $\gamma^1_0=\gamma^2_0$ and $\gamma^1_{1/2}=\gamma^2_{1/2}$,
then 
either $\gamma^{1}_{[0,1]} \subset \gamma^{2}_{[0,1]}$ or
$\gamma^{2}_{[0,1]} \subset \gamma^{1}_{[0,1]}$.

We say that $H$ is \emph{backward null non-branching} if $H$ is
forward null non-branching in the causally reversed space.

We say that $H$ is \emph{null non-branching} if it is both forward
and backward null non-branching.
\end{definition}

The next two propositions guarantees that the null non-branching condition
is available for a broad class of Lorentzian manifolds.

\begin{proposition}\label{prop:Lipschitz-nonBranch}
Let $(M,g)$ be 
a strongly causal Lipschitz Lorentzian manifold.
Let $S$ be an achronal set and $H = \partial I^+(S)$ its achronal boundary.
Assume $\noending{H}$ to be $C^{1,1}_{loc}$.
Then $H$ is null  non-branching.
\end{proposition}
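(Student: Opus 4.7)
The plan is to exhibit a locally Lipschitz null vector field $L$ on $\noending{H}$, to show that both $\gamma^1$ and $\gamma^2$, up to reparametrization, are integral curves of $L$, and then invoke the Picard--Lindelöf theorem to conclude that their images are nested along a common null generator. The key technical point, and the main obstacle, will be establishing Lipschitz regularity of $L$: this is what makes the ODE uniqueness available in a low-regularity setting.

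First, by \Cref{lem:lipschitz-reparametrization} I would replace the $\gamma^i$ with locally Lipschitz reparametrizations, and by \Cref{R:intervalH} assume $\gamma^i((0,1))\subset\noending{H}$; neither operation alters the images, so the conclusion is unaffected. Since $H$ is achronal, each $\gamma^i$ is an achronal, causal Lipschitz curve, and a standard low-regularity Lorentzian argument (see, e.g., \cite{KS}) shows that at Lebesgue-a.e.\ $t$ the derivative $\dot\gamma^i_t$ exists and is a future-directed null vector in $T_{\gamma^i_t}\noending{H}$.

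Next I would construct $L$. Since $\noending{H}$ is $C^{1,1}_{\mathrm{loc}}$ and $g$ is Lipschitz, the tangent distribution $T\noending{H}$ varies locally Lipschitz; because the bilinear form $g|_{T_p\noending{H}}$ is degenerate of constant rank (the hypersurface being null), its one-dimensional radical $\ell_p$ varies locally Lipschitz in $p$. Fixing a smooth local time-orientation and selecting a future-directed generator, one obtains a locally Lipschitz, future-directed, null vector field $L\in\vfield(\noending{H})$ tangent to $\noending{H}$. Since $\dot\gamma^i_t$ is a future-directed null vector in $T_{\gamma^i_t}\noending{H}$ a.e., it must be a non-negative multiple of $L_{\gamma^i_t}$.

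Finally I would reparametrize each $\gamma^i$ on $(0,1)$ so that $\dot\gamma^i_t=L_{\gamma^i_t}$, turning it into an integral curve of $L$ passing through the common interior point $q:=\gamma^1_{1/2}=\gamma^2_{1/2}\in\noending{H}$. By Picard--Lindelöf, integral curves of $L$ are uniquely determined by their initial data, so both reparametrized $\gamma^i$ are restrictions of the same maximal integral curve through $q$; their images are two sub-arcs of this common generator, each containing $q$, so one is contained in the other. Passing to closures extends the inclusion to $\gamma^i_{[0,1]}$, proving null non-branching.
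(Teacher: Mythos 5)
Your proof is correct and takes essentially the same route as the paper: construct a locally Lipschitz null vector field $L$ on $\noending{H}$, reparametrize both causal curves to be integral curves of $L$, and invoke Picard--Lindelöf uniqueness to conclude their images are nested along a common generator. The only real difference is that you supply an argument for why $L$ is Lipschitz (via the Lipschitz dependence of the one-dimensional radical of the degenerate induced metric), whereas the paper takes this for granted; and you pass through the claim that $\dot\gamma^i$ is null a.e.\ by an external ``standard'' argument, whereas the paper gets the parallelism $\dot\eta\parallel L$ more economically from the algebraic fact that causal tangent vectors to a null hypersurface are automatically null and spanned by $L$.
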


\begin{proof}
  Since $\noending{H}$ is $C^{1,1}$, there exists a
  Lipschitz-continuous, null vector field
  $L\in\vfield(T\noending{H})$.
  First, we claim that if $\gamma:I\to\noending{H}$ is a causal
 curve, then there exists a reparametrization $\eta:J\to\noending{H}$ of $\gamma$, such
  that $\eta$ is $C^1$ and $\dot\eta=L$.
  Indeed, by \Cref{lem:lipschitz-reparametrization} we can assume
  $\gamma$ to be Lipschitz-continuous.
  Then in the differentiability points, $\dot\eta\in T\noending{H}$
  and $\dot\eta$ is a future-directed causal vector.
  It thus holds that $\dot\eta_{r}$ is  parallel to $L_{\eta_{r}}$.
  Therefore, we can define the $L^\infty$ function
  $\phi:I\to (0,\infty)$ by the property
  \[\dot\gamma_{r}=\phi(r)L_{\gamma_{r}}.\]
  If we define
  \[
    T(s):=
    \bigg(
    \int_0^s
    \phi(r)
    \,
    \de r
    \bigg)^{-1}
    \qquad
    \text{ and }
    \qquad
    \eta_{s}:=\gamma_{T(s)}
    ,
  \]
  it is then immediate to see that $\eta$ satisfies the desired
  property.
Consider now
a pair of injective causal curves $\gamma^{1},\gamma^{2} : [0,1] \to
U$, such that $\gamma^1_0=\gamma^2_0$ and
$\gamma^1_{1/2}=\gamma^2_{1/2}$.
  By the first part, we know that there exist
  $\eta^i: J^{i}\to\noending{H}$, which are  reparametrizations of
  $\gamma^i|_{(0,1)}$, satisfying $\dot \eta^i=L$, for $i=0,1$.
  We can assume $0\in J^i$ and $\eta^i_0=\gamma^{i}_{1/2}$.
  By continuity, they can be extended to the closure $\overline{J^i}$.
  Since these two curves satisfy the same ODE, then they coincide in
  the intersection $\overline{J^1}\cap\overline{J^2}$. The thesis follows.
\end{proof}

  \begin{proposition}\label{P:non-branching-curves}
Let $(M,g)$ be 
a strongly causal $C^2$ Lorentzian manifold.
Let $\gamma: I \to M$ be a null curve (i.e., a causal curve such that its
points are not chronologically related).

Then $\gamma$ is the reparametrization of a geodesic.
\end{proposition}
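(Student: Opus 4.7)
The plan is to reduce to a local statement in a convex normal neighborhood and apply the classical broken null geodesic lemma. First, using that strong causality in a $C^2$ spacetime implies antisymmetry of $\leq$, any coincidence $\gamma_s=\gamma_t$ with $s<t$ forces $\gamma$ to be constant on $[s,t]$ (since $\gamma_s\leq\gamma_r\leq\gamma_t=\gamma_s$ for $r\in[s,t]$). We may therefore remove constant plateaus and assume $\gamma$ injective, and then by \Cref{lem:lipschitz-reparametrization} assume $\gamma$ is locally Lipschitz. The original curve is easily recovered from the injective one as a non-strict monotone reparametrization of the resulting null geodesic.

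Next, fix $t_0\in I$ and a convex normal neighborhood $U$ of $p:=\gamma_{t_0}$, chosen so that for every $x,y\in U$ with $x\leq y$ there is a unique geodesic $\sigma_{x,y}\subset U$ from $x$ to $y$, and such that $\sigma_{x,y}$ is timelike iff $x\ll y$. Restrict $\gamma$ to a subinterval $[a,b]\ni t_0$ with $\gamma([a,b])\subset U$. For any $s<t$ in $[a,b]$, the nullness of $\gamma$ gives $\gamma_s\not\ll\gamma_t$, so $\sigma_{s,t}$ is a null geodesic. I claim that $\gamma_u\in\sigma_{s,t}([0,1])$ for every $s<u<t$. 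If not, then $\sigma_{s,u}$ and $\sigma_{u,t}$ are two null geodesic segments whose tangent vectors at $\gamma_u$ must be linearly independent: otherwise, by uniqueness of null geodesics through a point with prescribed initial direction in a $C^2$ manifold, their concatenation would coincide with $\sigma_{s,t}$, contradicting $\gamma_u\notin\sigma_{s,t}$. The \emph{broken null geodesic lemma} (see, e.g., O'Neill, \emph{Semi-Riemannian Geometry}, Prop.~10.46) then produces a timelike curve from $\gamma_s$ to $\gamma_t$ in $U$, contradicting $\gamma_s\not\ll\gamma_t$.

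The claim implies that $\gamma|_{[a,b]}$ is contained in the image of a single null geodesic in $U$, and injectivity forces $\gamma|_{[a,b]}$ to be a strictly monotone reparametrization of this null geodesic. A standard cover-and-uniqueness argument over $I$, using once more the uniqueness of null geodesics through a point with given initial direction, glues these local geodesic pieces into a single maximal null geodesic. The main obstacle is precisely the broken null geodesic lemma: its proof crucially exploits the $C^2$ regularity of $g$ to construct a smooth variation through the break-point whose first-order effect moves the broken curve strictly into the chronological future of its starting point, using the second fundamental form of the local light cone in normal coordinates.
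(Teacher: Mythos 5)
Your proof is correct, but it follows a genuinely different route from the one in the paper. You reduce to a convex normal neighborhood and invoke the classical \emph{broken null geodesic lemma} (corner rounding): any causal curve that is not a null pregeodesic can be deformed into a timelike curve with the same endpoints, so a concatenation of two null geodesics with non-parallel tangents at the junction would put $\gamma_s \ll \gamma_t$, contradicting achronality. This is a local, Penrose-style argument organized around convex normal neighborhoods and a standard covering/uniqueness patch-up.

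The paper instead works with the future light cone $H = \partial I^{+}(\gamma_0)$ directly: strong causality and achronality force $\gamma$ to lie on $H$, which for $C^2$ metrics is the $C^1$-image under $\exp_{p}$ of the Minkowski null cone in $T_{p}M$; the tangent $\dot\gamma$, being causal and in $TH$ almost everywhere, is therefore a.e.\ parallel to the radial geodesic vector field $L = \exp_{*}\hat L$, and pulling back to $T_{p}M$ reduces the claim to the fact that $\hat\gamma = \exp^{-1}\circ\gamma$ must be a ray through the origin. The two approaches are essentially dual: you exclude corners by producing a timelike perturbation, while the paper constrains $\gamma$ to lie inside the cone $H$ and be tangent to its ruling. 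Your argument is more elementary and self-contained as a piece of causality theory, relying on a single classical lemma; the paper's argument is more analytic and, notably, is structured around the same objects (the null cone, a radial gauge, the geodesic vector field $L$) that reappear throughout the rest of the paper (\Cref{R:compatibility-penrose}, \Cref{T:compatibility-penrose}), so it integrates more naturally with the surrounding framework. One small point worth making explicit in your version is that, once $\gamma|_{[a,b]}$ is shown to lie in $\sigma_{a,b}([0,1])$, you should note that the various $\sigma_{s,t}$ for different $s<t$ in $[a,b]$ are all sub-segments of $\sigma_{a,b}$ (by geodesic uniqueness in the convex neighborhood), so the covering argument indeed glues into a single maximal geodesic.
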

\begin{proof}
  By \Cref{lem:lipschitz-reparametrization}, we may assume $\gamma$ to
  be locally-Lipschitz and that $0\in I$.
  Since the statement is local, it suffices to  show that $\gamma$
  can be reparametrized into a geodesic in a neighborhood of $0$.
  Let $p=\gamma_0$.
  Let $H$ be the light-cone in $M$, emanating from $p$, and let
  $\hat{H}$
  be the light-cone in the Minkowski spacetime. Denote by $\hat{L}$ the radial vector field along $\hat{H}$.
  Since $g$ is $C^2$,
  the exponential is a map of class $C^1$, and therefore a
  diffeomorphism between a neighborhood of $0$ in $T_{p}M$ and a
  neighborhood of $p$.
  As a consequence, $H\setminus \{p\}=\exp(\hat H\setminus\{0\})$ is a
  $C^1$ sub-manifold of $M$. It is standard fact that $L:=\exp_*\hat{L}$ is a null geodesic vector field along $H$.
  We define the gauge $G(x):=|\exp^{-1}(x)|$, where $|\cdot|$ is the Euclidean norm in $T_{p}M$; the map $G$ is of class $C^1$.
  By strong causality, $\gamma$ takes values in $H$, therefore
  $\dot\gamma_{t}\in TH$, for a.e.\ $t$.
  Since by assumption $\dot \gamma$ is causal, it follows that  $\dot\gamma$ is
  parallel to $L$.
  Denoting  $\hat \gamma:=\exp^{-1}\circ \gamma$, we infer that 
  $\dot{\hat\gamma}$ is parallel to $\hat L$, a.e..
  As a consequence there exists $\hat \eta$, a reparemetrization of
  $\hat\gamma$, such that $\hat\eta_{t}=\hat{L}_{\hat\eta_{t}}$, for a.e.\
  $t$.
  Since $\hat L$ is a geodesic vector field, then $\hat\eta$ is a
  geodesic in $T_{p}M$, therefore, $\hat\eta$ is a linear ray starting
  from $0$.
  We conclude that $\eta:=\exp\circ \hat\eta$ is a geodesic in $(M,g)$.
\end{proof}
\begin{corollary}\label{C:null-non-branching-c2}
Let $(M,g)$ be 
a strongly causal $C^2$ Lorentzian manifold.

Then every closed achronal set is null non-branching.
\end{corollary}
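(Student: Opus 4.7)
The plan is to reduce to \Cref{P:non-branching-curves} (every null curve in a strongly causal $C^2$ manifold reparametrizes a null geodesic) and then apply the classical corner-rounding argument to rule out ``branching'' at a common intermediate point.

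By reversing the time orientation it suffices to establish forward null non-branching. Let $H\subset M$ be a closed achronal set and let $\gamma^1,\gamma^2:[0,1]\to H$ be injective causal curves with $\gamma^1_0=\gamma^2_0=:p$ and $\gamma^1_{1/2}=\gamma^2_{1/2}=:q$. Since $H$ is achronal, no two points of $\gamma^i([0,1])$ are chronologically related, so each $\gamma^i$ is a null curve in the sense of \Cref{P:non-branching-curves}. That proposition then provides, for each $i=1,2$, an affinely parametrized null geodesic $\eta^i:[0,T_i]\to M$ with $\eta^i_0=p$ of which $\gamma^i$ is a reparametrization. In particular there exist $t^\star_i\in(0,T_i)$ with $\eta^i_{t^\star_i}=q$, and $\eta^i$ continues past $q$ inside $H$ (since $\gamma^i$ is injective and $\gamma^i([1/2,1])\subset H$).

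The key claim is that the initial tangents $\dot\eta^1_0$ and $\dot\eta^2_0$ are positive scalar multiples of each other. Granting this, ODE uniqueness for the geodesic equation yields $\eta^1_t=\eta^2_{ct}$ on overlapping domains for some $c>0$, so the images of the $\eta^i$---and therefore of the $\gamma^i$---are nested, which is the required conclusion.

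To prove the key claim, argue by contradiction and assume that $\dot\eta^1_0$ and $\dot\eta^2_0$ are not parallel. Then $\dot\eta^1_{t^\star_1}$ and $\dot\eta^2_{t^\star_2}$ are not parallel either, for otherwise ODE uniqueness at $q$ would force $\eta^1$ and $\eta^2$ to coincide as unparametrized geodesics, contradicting the non-parallelism at $p$. Choose $\delta>0$ small enough that $r:=\eta^1_{t^\star_1+\delta}\in H$. The concatenation of $\eta^2|_{[0,t^\star_2]}$ followed by $\eta^1|_{[t^\star_1,t^\star_1+\delta]}$ is then a piecewise causal curve from $p$ to $r$ with a corner at $q$ whose incoming and outgoing tangents are two non-parallel future-directed null vectors. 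The classical corner-rounding lemma in smooth Lorentzian geometry (see, e.g., \cite{Penrose-DiffTopGR,HawEll}) then produces a timelike curve from $p$ to $r$, so $p\ll r$. This contradicts the achronality of $H$ since $p,r\in H$.

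The main obstacle beyond \Cref{P:non-branching-curves} is the corner-rounding lemma, which is a standard consequence of the $C^2$-regularity of the metric and whose formulation in the present setting requires only minor bookkeeping.
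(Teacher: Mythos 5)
Your proof is correct and follows the same overall strategy as the paper's: reduce via \Cref{P:non-branching-curves} to two affinely parametrized null geodesics emanating from $p$ and passing through $q$, observe that if they are genuinely distinct then achronality of $H$ is violated just past $q$, and conclude via ODE uniqueness.

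The one place where you diverge slightly in presentation concerns the mechanism behind the key step. The paper asserts that if the tangents at $q$ are not parallel, then $q$ is a \emph{focal point} for $p$ and the geodesic ceases to be achronal beyond $q$. Strictly speaking, a point where two distinct null geodesics from $p$ meet need not be a conjugate/focal point of $p$; the correct standard fact (and the one you invoke) is the corner-rounding lemma, which shows directly that a piecewise null geodesic broken at $q$ with non-parallel incoming and outgoing tangents yields a timelike curve from $p$ to a point slightly past $q$. This is the cleaner and more precise route; it gives exactly what both proofs need, namely $p\ll r$ with $p,r\in H$, contradicting achronality. So your argument is not only correct but arguably tightens a small looseness in the paper's phrasing, while the underlying idea is identical.
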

\begin{proof}
  Let $H\subset M$ be a closed achronal subset and fix $\gamma^0,\gamma^1$ two causal
  curves in $H$, such that $\gamma_{s}^0=\gamma_{s}^1$,
  $s=0,\frac{1}{2}$.
  By the previous proposition, we can assume that $\gamma^i$ is a
  geodesic, $i=0,1$, with $\gamma^0_0=\gamma^1_0$ and
  $\gamma^0_{s_0}=\gamma^0_{s_1}$, with $s_0,s_1\in(0,1)$.
  The vectors $\dot\gamma^0_{s_0}$ and $\dot\gamma^0_{s_1}$ are
  parallel; if not, then $\gamma^0_{s_0}$ is a focal point for
  $\gamma_0^0$, thus $\gamma_0$ stops to be a null curve after $s_0$,
  a contradiction.
  We deduce that $\gamma^0$ and $\gamma^1$ are part of the same
  maximal geodesic and coincide at $0$. The thesis follows by the Cauchy{--}Lipschitz theorem, ensuring (existence and) uniqueness of solutions of the geodesic equation in a Lorentian metric $g$ of class $C^2$.
\end{proof}

We next draw some consequences from the null non-branching condition.
\begin{proposition}\label{P:uniqueness-G-causal}
  Let $H$ be a weakly convex, null non-branching, closed achronal set, $G$ a gauge for $H$, and
  $x,y\in {H}$, with $x\leq y$ and $x\neq y$.
  Assume that either $x\in\noending{H}$ or $y\in\noending{H}$.

  Then there exists a unique $G$-causal curve connecting $x$ to $y$.
\end{proposition}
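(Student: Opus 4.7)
Existence is immediate: weak convexity of $H$ supplies some causal curve from $x$ to $y$, and \Cref{P:G-parametrization} reparametrizes it to a $G$-causal one. The substance of the proposition is uniqueness, so I fix two candidates $\gamma^1,\gamma^2\in\Caus_G$ from $x$ to $y$ and aim to show $\gamma^1=\gamma^2$.

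My first step is to verify that every $G$-causal curve $\gamma$ from $x$ to $y$ is injective on $[0,1]$. The affine function $G\circ\gamma|_{(0,1)}$ has strictly positive slope $c$: slope zero would, via \Cref{P:not-causally-related}, force $\gamma$ to be constant on $(0,1)$ and hence $x=y$; negative slope would contradict the monotonicity of a gauge along injective causal curves in $\noending{H}$. Injectivity on $(0,1)$ follows, and any coincidence $\gamma_s=x$ with $s\in(0,1]$ would produce a constant segment at $x$, forcing $c=0$ (using that $x\in\noending{H}$ so $G(x)$ is defined), a contradiction; an analogous argument in the case $y\in\noending{H}$ rules out $\gamma_s=y$ for $s<1$, and combined with \Cref{P:Hcausal} this closes the injectivity check.

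Assume $x\in\noending{H}$ (the case $y\in\noending{H}$ is symmetric, calling on \emph{backward} null non-branching). Because $x\notin\initial$ there exists $x'\in H$ with $x'<x$, and combining weak convexity with \Cref{P:G-parametrization} gives a $G$-causal, injective curve $\sigma:[0,1]\to H$ from $x'$ to $x$. Form the concatenations $\tilde\gamma^i:[0,1]\to H$ by setting $\tilde\gamma^i_t=\sigma_{2t}$ on $[0,1/2]$ and $\tilde\gamma^i_t=\gamma^i_{2t-1}$ on $[1/2,1]$. Each $\tilde\gamma^i$ is causal and injective: injectivity on each half is inherited, while the two halves can only meet at a point $z$ with $z\leq x$ and $z\geq x$, which by antisymmetry of $\leq$ on the causal set $H$ (\Cref{P:Hcausal}) forces $z=x$. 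Since $\tilde\gamma^1_0=\tilde\gamma^2_0=x'$ and $\tilde\gamma^1_{1/2}=\tilde\gamma^2_{1/2}=x$, forward null non-branching applies and yields nested images; stripping off the common first half $\sigma([0,1])\subset J^-(x)$, whose only intersection with $\gamma^i([0,1])\setminus\{x\}\subset J^+(x)\setminus\{x\}$ is $\{x\}$, the nesting descends to $\gamma^1([0,1])\subset\gamma^2([0,1])$ or the reverse inclusion.

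Nested images force identical images: $\gamma^2:[0,1]\to\gamma^2([0,1])$ is a homeomorphism, so any closed connected subset containing $x=\gamma^2_0$ and $y=\gamma^2_1$ is all of $\gamma^2([0,1])$. To promote equality of images to equality of maps, set $\lambda:=(\gamma^1)^{-1}\circ\gamma^2:[0,1]\to[0,1]$; the affineness of both $G\circ\gamma^1$ and $G\circ\gamma^2$ on $(0,1)$ forces $\lambda$ to be affine there, and the boundary values $\lambda(0)=0$, $\lambda(1)=1$ pin it to the identity, yielding $\gamma^1=\gamma^2$. The most delicate points I foresee are the injectivity of the concatenated curve $\tilde\gamma^i$ and the clean application of null non-branching after the extension through $x'$; once these are set up, the affine-parameter rigidity finishes the argument in a single computation.
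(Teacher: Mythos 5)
Your proof is correct and rests on the same core device as the paper's: extend both candidate $G$-causal curves backward through a point $x'<x$ in $H$ and invoke forward null non-branching on the concatenations (symmetrically, extend forward and use backward non-branching when $y\in\noending{H}$). Where the two diverge is in how the nested-image conclusion is exploited. The paper splits into the cases $G\circ\gamma^1=G\circ\gamma^2$ and $G\circ\gamma^1\neq G\circ\gamma^2$: in the first it uses \Cref{P:not-causally-related} to show the images cannot be nested unless the curves already agree, and in the second it affinely reparametrizes the steeper curve so as to reduce to the first case. You bypass the case split entirely: from nested images you deduce equal images by a topological observation (a closed connected subset of the arc $\gamma^2([0,1])$ containing both endpoints is the whole arc), and then the affine structure of $G$ along each curve forces $(\gamma^1)^{-1}\circ\gamma^2$ to be the identity. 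Your version is a bit cleaner in that it handles the endpoint $y\in\final$ (where $G(y)$ is undefined, so $G\circ\gamma^1$ and $G\circ\gamma^2$ need not have the same slope) uniformly, and you make explicit the injectivity of all the curves involved --- which the paper takes for granted, though it is genuinely needed since null non-branching is stated only for injective curves. The price is the small topological lemma, which the paper avoids via its reparametrization trick.
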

\begin{proof}
    The existence is given by \Cref{P:G-parametrization}.

  We assume $x\in\noending{H}$, the other case being symmetric.
  Let $\gamma$ and $\eta$ be two $G$-causal curves from $x$ to $y$.
  Let $z\in J^{-}(x)\cap H$ with $z\neq x$, and let $\delta$ be a
  curve connecting $z$ to $x$.

  Consider first the case $G\circ\gamma=G\circ\eta$.
  If by contradiction $\gamma_{t}\neq \eta_{t}$, thus, $\gamma([0,1])\not\subset\eta([0,1])$, nor
  $\eta([0,1])\not\subset\gamma([0,1])$.
  By concatenating $\delta$ with $\gamma$ and $\delta$ with $\eta$, we
  obtain a contradiction with the non-branching hypothesis.

  Consider now the case $G\circ\gamma\neq G\circ\eta$.
  In this case, by affinity, since $\gamma_0=\eta_0=x\in\noending{H}$,
  then $G\circ\gamma$ and $G\circ\eta$ have different derivatives.
  If $G\circ \eta$ is steeper, take $\tilde\eta$ as a restriction and
  affine reparametrization of $\eta$.
  By following the previous part we deduce $\gamma=\tilde\eta$, which
  is a contradiction.
\end{proof}

\begin{remark}\label{R:measurablity-Gamma}
\Cref{P:uniqueness-G-causal} states that there  exists a map
\[\Gamma:H\times\noending{H}\cup
\noending{H}\times H\to C_{G},\quad  
\text{such that }\Gamma(x_0,x_1)_{i}=x_{i},\; i=0,1.
\]
Note that $\Gamma$  is the right inverse of $(e_0,e_1)$, therefore the graph of $\Gamma$ is closed. It follows that  $\Gamma$ is Borel. 
\end{remark}
%\begin{corollary}
%  \label{C:existence-uniqueness-G-causal-curves}
%  Let $H$ be a non-branching weakly convex achronal boundary,
%  $G$ a gauge, and
%  $x,y\in {H}$.
  %
%  Assume that either $x\in\noending{H}$ or $y\in\noending{H}$.
  %
%  If $x\leq y$, $x\neq y$, then there exists a unique $G$-causal curve connecting
%  $x$ to $y$
%\end{corollary}
\begin{proposition}\label{P:characterization-G-causal}
  Let $H$ be a null non-branching weakly convex closed achronal set 
  and let $G$ be a gauge.
  Assume that $\gamma$ is a non-constant curve in $H$, such that $\gamma_{s}\leq \gamma_{t}, \forall s\leq t$, continuous
  at the end-points, such that $\gamma_{(0,1)}\subset\noending{H}$ 
  and $G\circ \gamma|_{(0,1)}$ is affine and non-decreasing.

  Then $\gamma$ is continuous and therefore a causal curve.
\end{proposition}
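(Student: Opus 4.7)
My plan is first to upgrade the monotonicity of $G\circ\gamma$ from non-decreasing to strictly increasing on $(0,1)$, and then to identify $\gamma$, on an arbitrary compact subinterval of $(0,1)$, with a reparametrization of the unique $G$-causal curve joining two of its values; continuity of $\gamma$ will then follow at once.

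First I would rule out that $G\circ\gamma|_{(0,1)}$ is constant. If it were, then for any $s<t$ in $(0,1)$ we would have $\gamma_s,\gamma_t\in\noending{H}$, $\gamma_s\leq\gamma_t$, and $G(\gamma_s)=G(\gamma_t)$; \Cref{P:not-causally-related} would then force $\gamma_s=\gamma_t$, hence $\gamma$ would be constant on $(0,1)$, and the assumed continuity at the endpoints would extend this to $[0,1]$, contradicting the non-constancy hypothesis. Thus $G\circ\gamma|_{(0,1)}$ is affine and strictly increasing.

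To establish continuity at an arbitrary interior point $t_0\in(0,1)$, I would fix $s_1<t_0<s_2$ in $(0,1)$. Weak convexity together with \Cref{P:G-parametrization} yields a $G$-causal curve $\sigma:[0,1]\to H$ from $\gamma_{s_1}$ to $\gamma_{s_2}$; since $\gamma_{s_1},\gamma_{s_2}\in\noending{H}$, the null non-branching hypothesis and \Cref{P:uniqueness-G-causal} make $\sigma$ unique, and the strict monotonicity of the affine function $G\circ\sigma$ makes $\sigma$ injective on $[0,1]$. For each $t\in[s_1,s_2]$ the three points $\gamma_{s_1},\gamma_t,\gamma_{s_2}$ all lie in $\noending{H}$, so concatenating the (unique) $G$-causal curves from $\gamma_{s_1}$ to $\gamma_t$ and from $\gamma_t$ to $\gamma_{s_2}$ and then $G$-reparametrizing via \Cref{P:G-parametrization} produces a $G$-causal curve from $\gamma_{s_1}$ to $\gamma_{s_2}$ whose image, by uniqueness, coincides with that of $\sigma$. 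Hence there is a unique $\rho(t)\in[0,1]$ with $\gamma_t=\sigma(\rho(t))$; applying $G$ to this equality and invoking the affinity of both $G\circ\sigma$ and $G\circ\gamma$ on the relevant intervals forces $\rho$ to be affine, in particular continuous. Therefore $\gamma|_{[s_1,s_2]}=\sigma\circ\rho$ is continuous at $t_0$, and combined with the assumed continuity at $0$ and $1$, $\gamma$ is continuous on $[0,1]$; together with the causal ordering hypothesis this identifies $\gamma$ as a causal curve in the sense of \Cref{D:causalcurve}.

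The one delicate point I expect is the last uniqueness step: one must be sure that the concatenation-plus-reparametrization really does reproduce $\sigma$, not merely some other $G$-causal curve between the same endpoints. This is exactly what \Cref{P:uniqueness-G-causal} provides, \emph{provided} both endpoints are in $\noending{H}$, which is secured by choosing $s_1,s_2\in(0,1)$ so that all intermediate values $\gamma_t$ also sit in $\noending{H}$. Once this uniqueness is in hand, the rest of the argument is essentially bookkeeping on affine functions.
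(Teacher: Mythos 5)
Your argument is correct and rests on the same two ingredients as the paper's own proof: the uniqueness of the $G$-causal curve joining two interior points of $\gamma$ (via \Cref{P:uniqueness-G-causal} together with non-branching and weak convexity), and the strict monotonicity of the affine function $G\circ\gamma$, which together pin $\gamma$ down on every compact subinterval of $(0,1)$. The paper absorbs your explicit affine reparametrization $\rho$ by declaring the comparison curve $\eta^\epsilon$ to be parametrized on $[\epsilon,1-\epsilon]$, so that $G\circ\gamma$ and $G\circ\eta^\epsilon$ are forced to coincide as affine functions and equality $\gamma_t=\eta^\epsilon_t$ follows point by point; your preliminary observation that $G\circ\gamma|_{(0,1)}$ cannot be constant (hence is strictly increasing) is needed but left implicit in the paper, so your write-up is a touch more careful on that point.
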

\begin{proof}
  Fix $\epsilon>0$.
  Let $\eta^\epsilon:[\epsilon,1-\epsilon]\to H$ be the $G$-causal curve given by
  \Cref{P:uniqueness-G-causal} connecting $\gamma_\epsilon$
  to $\gamma_{1-\epsilon}$.
  Fix $t\in (\epsilon,1-\epsilon)$.
  Arguing as in  the proof of
  \Cref{P:uniqueness-G-causal}, one can check that
  $\gamma_{t}=\eta^\epsilon_{t}$.
  By taking the limit as $\epsilon\to 0$, we obtain that any
  restriction of
  $\gamma|_{(0,1)}$ to a closed interval is causal.
  Since $\gamma$ is continuous at the endpoints, we conclude.
\end{proof}

We prove that the sets of initial and final points are
negligible without the tightness assumption on the gauge (see \Cref{P:extrimal-negligible}) but assuming instead the non-branching condition.

\begin{theorem}\label{T:initialpoints}
Let $(H,G,\mm)$ be a null non-branching synthetic null hypersurface 
satisfying the \NEC\ 
$\NC^{e}(N)$, for some $N> 0$.
Then 
\[
\mm(\initial)=\mm(\final)
  =
  0
.
\]
\end{theorem}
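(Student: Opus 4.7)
The plan is to repeat the contradiction argument of \Cref{P:extrimal-negligible}, using null non-branching in place of local compactness. By the reduction used there ($\mm(\static)=0$ and reversal symmetry), it suffices to prove $\mm(\initial\setminus\final)=0$; assume toward contradiction that this measure is positive. Thanks to null non-branching, \Cref{P:uniqueness-G-causal} gives a unique $G$-causal curve $\Gamma(x,y)$ for each $(x,y)\in(\initial\setminus\final)\times\noending{H}$ with $x\leq y$, and \Cref{R:measurablity-Gamma} makes $\Gamma$ Borel, so the slope $A(x,y)$ of $G\circ\Gamma(x,y)|_{(0,1)}$ is a positive Borel function. Exactly as in \Cref{P:extrimal-negligible}, one would produce $\epsilon>0$, a compact $K\subset\initial\setminus\final$ with $\mm(K)>0$, and a continuous $T\colon K\to\noending{H}$ with $T(K)$ compact and $A(x,T(x))>\epsilon$; setting $\mu_0:=\mm\llcorner_K/\mm(K)$ and $\mu_1:=T_\sharp\mu_0$, $\NC^e(N)$ yields $\nu\in\OptGeo^G(\mu_0,\mu_1)$ with $\U_{N-1}(\mu_t|\mm)$ concave.

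Still mimicking \Cref{P:extrimal-negligible}, pick $x_0\in K$ and a neighbourhood $U$ of $x_0$ with $\mm(U)<\infty$ (possible since $\mm$ is Radon) and $\mu_0(U)>0$. By continuity of causal curves and Fubini, $\nu(\{\gamma\in\Caus_G:\gamma_0,\gamma_{\bar t}\in U\})>0$ for some $\bar t>0$, and inner regularity of $\nu$ furnishes a compact $\bar C$ inside this set with $\nu(\bar C)>0$. Setting $\bar\nu:=\nu\llcorner_{\bar C}/\nu(\bar C)$, $\bar\mu_0:=(e_0)_\sharp\bar\nu$ and $\bar\mu_1:=(e_{\bar t})_\sharp\bar\nu$ (both concentrated on compact subsets of $U$), applying $\NC^e(N)$ to $(\bar\mu_0,\bar\mu_1)$ after reparametrising to $[0,1]$ yields a $G$-causal plan $\eta$ with concave $\U_{N-1}((e_t)_\sharp\eta|\mm)$.

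At this point \Cref{P:extrimal-negligible} uses local compactness to secure $\eta(C)=1$ for some compact $C\subset\Caus_G$. Here null non-branching steps in: by \Cref{P:uniqueness-G-causal}, $\eta=\Gamma_\sharp\pi$ for some causal coupling $\pi\in\Pi_\leq(\bar\mu_0,\bar\mu_1)$ supported in the compact set $\overline U\times\overline U$, so Lusin's theorem applied to $\Gamma|_{\supp(\pi)}$ produces, for every $\delta>0$, a compact $K_\delta\subset\supp(\pi)$ with $\pi(K_\delta)>1-\delta$ on which $\Gamma$ is continuous; then $C_\delta:=\Gamma(K_\delta)$ is a compact subset of $\Caus_G$ with $\eta(C_\delta)>1-\delta$. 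The compact sets $C_\delta^t:=e_t(C_\delta)$ vary Hausdorff-continuously in $t$, satisfy $C_\delta^0\subset\initial$ and $C_\delta^t\subset\noending{H}$ for $t\in(0,1)$ (hence disjoint), and a careful double Jensen estimate combined with the concavity of $\U_{N-1}$ should give $\mm(C_\delta^t)\geq c(t,\delta)\,\mm(C_\delta^0)$ with $c(t,\delta)\to 1$ as $(t,\delta)\to 0$. Letting successively $t\to 0$, then $\varepsilon\to 0$ in a precompact neighbourhood $(C_\delta^0)^\varepsilon\subset U$, and finally $\delta\to 0$ recovers the impossible inequality $\mm(C^0)\geq 2\mm(C^0)$ with $\mm(C^0)>0$.

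The main obstacle I expect is this last estimate: with $\eta(C)=1$ Jensen gives $\mm(C_t)^{1/(N-1)}\geq\U_{N-1}(\eta_t|\mm)$ immediately because $\eta_t$ is concentrated on $C_t$, but here only $\eta(C_\delta)>1-\delta$, so the leakage mass $\eta(\Caus_G\setminus C_\delta)\leq\delta$ could \emph{a priori} be spread over an $\mm$-large set and corrupt the Jensen bound. The remedy should exploit $\mm(U)<\infty$: split the entropy of $\eta_t$ into the part supported on $C_\delta^t$ (controlled by Jensen with $\mm(C_\delta^t)<\infty$) and the outside part (bounded below via the pointwise inequality $x\log x\geq x-1$ on the $\mm$-finite set $U$, plus a straightforward truncation outside $U$), so that the two contributions combine into a uniform lower bound matching the $\delta\to 0$ limit and the final contradiction indeed materialises.
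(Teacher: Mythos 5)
You replicate the compactness-and-Hausdorff-convergence argument of \Cref{P:extrimal-negligible} and try to replace the missing local-compactness input with a Lusin-type approximation, but---as you acknowledge yourself---this leaves the final double Jensen estimate unresolved: with $\eta$ concentrated on $C_\delta$ only up to mass $\delta$, the inequality $\U_{N-1}(\eta_t|\mm)\leq\mm(C_\delta^t)^{1/(N-1)}$ no longer holds, and your proposed entropy-splitting remedy is only a sketch. It is not clear it can be made to work: the part of the density of $\eta_t$ escaping $C_\delta^t$ may be arbitrarily large on a set of tiny $\mm$-measure, so the pointwise bound $x\log x\geq x-1$ yields an additive error that need not vanish at the required rate as $\delta\to 0$, and the ``straightforward truncation outside $U$'' is left unexplained. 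That is a genuine gap.

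The paper in fact discards the entire compactness machinery once $\nu$ is produced. The key observation is that null non-branching makes the curves in $\supp\nu$ pairwise non-crossing at intermediate times: if $\gamma^1,\gamma^2\in\supp\nu$ satisfied $\gamma^1_t=\gamma^2_s$ with $1>t>s>0$, the common point is not a final point, so the trajectories nest---say $\gamma^2_{(0,1)}\subset\gamma^1_{(0,1)}$---and then $\gamma^2_0$ lies on the closure of the ray emanating from $\gamma^1_0\in\initial$, hence cannot itself be an initial point distinct from $\gamma^1_0$, contradicting $\gamma^1_0,\gamma^2_0\in K$. Consequently $(e_t)_\sharp\nu\perp(e_s)_\sharp\nu$ for all $t\neq s$, yet by \Cref{R:absolutelycontinuous} all these probability measures are absolutely continuous with respect to $\mm$; an uncountable family of pairwise mutually singular probabilities all dominated by a single Radon (hence $\sigma$-finite) measure is impossible. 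No inner-regularity argument, Hausdorff convergence, or Jensen inequality is needed at all: concavity is used solely to secure absolute continuity of the intermediate marginals.
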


\begin{proof}
The first part of the proof repeats verbatim the one of \Cref{P:extrimal-negligible}. Hence, 
we will argue by contradiction that $\mm(\initial\setminus\final)
> 0$ 
and deduce the existence 
of continuous map map $T : K \to H$, with $K\subset \initial\setminus\final$ of positive $\mm$-measure. 
Define the absolutely continuous measure 
$\mu_{0} : =  \mm\llcorner_{K}/ \mm(K)  \in \mathcal{P}(H)$ and 
$\mu_{1} : = T_{\sharp} \mu_{0}$ 
so that 
$({\rm Id},T)_\sharp \mu_0 \in \Pi_\leq(\mu_0,\mu_1)$.

\Cref{D:syntheticnull} gives then a measure
$\nu\in\OptGeo^G(\mu_0,\mu_1)$ such that
the function $t\mapsto \U_{N}((e_{t})_{\sharp}\nu)$ is concave.
By \Cref{R:absolutelycontinuous} $(e_{t})_{\sharp}\nu \ll \mm$. 
However, since $\mu_{0}$ is concentrated on the set of initial points $\initial$, necessarily $(e_{t})_{\sharp}\nu \perp (e_{s})_{\sharp}\nu$ for all $s \neq t$, 
giving a contradiction with the $\sigma$-finiteness of $\mm$:
indeed, suppose by contradiction that there exist two causal curves $\gamma^1, \gamma^2$,
such that $\gamma^1_{t}=\gamma^2_{s}$, with $1>t> s$ but $\gamma^1_0,\gamma^2_0\in K$.
Then by the null non-branching assumption 
($\gamma^1_{t}$ is not a final point),
necessarily $\gamma^2_{(0,1)}\subset \gamma^1_{(0,1)}$.
Hence, $\gamma^2_0$ cannot be an initial point, 
giving a contradiction.
\end{proof}

\begin{remark}\label{R:invariance-NC-non-branching}
As a consequence of \Cref{T:initialpoints}, we can prove 
that, under the null non-branching assumption, the $\NC^e(N)$ condition is independent of
the representative in the equivalence class of gauge and  measure.
That is, one can prove the same conclusion of \Cref{T:invariance-NC-locally-compact}, 
under the null non-branching assumption, in place of the tightness of the gauge.
\end{remark}

\subsection{Ray decomposition of achronal sets and disintegration}
We fix the following definition: the set $\mathcal{R} : = (J \cup
J^{-1})\cap (H\times H)$ will be called the \emph{transport relation}.

\begin{proposition}\label{L:Requivalence}
  Let $H$ be a null non-branching, weakly convex closed achronal set.
  Then $\relation$ is an equivalence relation on $\noending{H}$.
\end{proposition}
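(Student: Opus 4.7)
Reflexivity of $\mathcal{R}$ is immediate from the reflexivity of the preorder $\leq$: for every $x\in\noending{H}$ we have $x\leq x$, so $(x,x)\in J\cap(H\times H)\subset\mathcal{R}$. Symmetry is built into the definition, since $J\cup J^{-1}$ is symmetric. The substance of the proof is transitivity: given $x,y,z\in\noending{H}$ with $(x,y),(y,z)\in\mathcal{R}$, I split into four sub-cases according to the directions of the two causal relations. The two ``aligned'' cases ($x\leq y\leq z$ and $z\leq y\leq x$) follow at once from transitivity of $\leq$. The remaining cases are the \emph{common past} case ($y\leq x$ and $y\leq z$) and the symmetric \emph{common future} case ($x\leq y$ and $z\leq y$); I treat the first, the second being identical after exchanging forward with backward non-branching.

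In the common past case, weak convexity furnishes causal curves $\gamma:y\to x$ and $\eta:y\to z$ inside $H$, which may be assumed injective (using \Cref{P:Hcausal}, i.e., the antisymmetry of $\leq$ on $H$, to collapse constancy intervals). Since $y\in\noending{H}$, in particular $y\notin \initial$, there exists $y'\in H$ with $y'\leq y$ and $y'\neq y$, and weak convexity yields an injective causal curve $\delta:y'\to y$ in $H$. I then form the concatenations
\[
  \delta+\gamma,\quad \delta+\eta:[0,1]\to H,
\]
parametrized so that $\delta$ occupies $[0,\tfrac12]$ and the second piece occupies $[\tfrac12,1]$. Both curves coincide at $t=0$ (value $y'$) and at $t=\tfrac12$ (value $y$), and both are causal by the transitivity of $\leq$. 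The key observation is that each concatenation remains \emph{injective}: any common value of $\delta$ and $\gamma$ (or $\eta$) would give a point $p$ with $p\leq y$ and $y\leq p$, and then antisymmetry on $H$ (\Cref{P:Hcausal}) forces $p=y$, consistent with the unique join at the midpoint.

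Forward null non-branching now applies, producing, say, $(\delta+\gamma)([0,1])\subset(\delta+\eta)([0,1])$. Splitting the right-hand side at $t=\tfrac12$, any point of $\gamma([1/2,1])$ lying in $\delta([0,1/2])$ would again produce the causal-loop contradiction with $y$ via antisymmetry, so in fact $\gamma((1/2,1])\subset\eta((1/2,1])$. In particular $x=\gamma_1\in\eta([0,1])$, which places $x$ on the curve $\eta$ from $y$ to $z$ and hence gives $x\leq z$ or $z\leq x$; either way $(x,z)\in\mathcal{R}$. The common future case runs along the same lines, this time extending past $y$ by a curve into $\noending{H}\cap J^+(y)$ (available because $y\notin\final$) and invoking backward null non-branching.

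\textbf{Main obstacle.} The only delicate point is guaranteeing the injectivity of the two concatenations before applying the non-branching hypothesis; this is what forces the use of \Cref{P:Hcausal} (causality of $H$) and the condition $y\in\noending{H}$, which ensures the existence of the backward (resp.\ forward) extension $\delta$ inside $H$. Once these are in place, non-branching directly produces the desired comparability of $x$ and $z$.
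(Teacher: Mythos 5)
Your proof is correct and follows essentially the same strategy as the paper's: in the non-aligned case of transitivity, use weak convexity together with $y\in\noending{H}$ to extend past the common vertex $y$ to a new point, concatenate to form two causal curves sharing both an endpoint and the midpoint $y$, and invoke the appropriate direction of null non-branching (you take the common-past case and extend backward; the paper takes the common-future case and extends forward — these are mirror images). You are more explicit than the paper about verifying injectivity of the concatenations, which is a genuine hypothesis of the non-branching definition that the paper's proof leaves implicit; the only small caveat is that \Cref{P:Hcausal}, which you invoke for this, formally assumes $H$ admits a gauge — a hypothesis not listed in \Cref{L:Requivalence} itself, though present in every downstream use of the proposition.
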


\begin{proof}
The symmetric and reflexive properties are trivially verified. We are therefore only left with checking the transitive property. 

Suppose then $(x,y), (y,z) \in \mathcal{R}$. 
Without any loss of generality we can assume that all these points are
different and that $x \leq y$ and $z \leq y$ (the other cases are either trivial or equivalent to this one). 
Assume on the contrary that neither $x\leq z$, nor $z\leq x$.
Let $p\in J^{+}(y)\cap H$, such that $p\neq y$.
Let $\gamma$ and $\delta$ be two curves connecting $x$ and $z$
(respectively) to $p$, passing through $y$.
Up to reparametrizing we can assume that $\gamma_{1/2}=\delta_{1/2}=y$, a
contradition with the backward non-branching-ness.
\end{proof}

If $\alpha\in \noending{H}$, we will denote by $\noending{H}_\alpha$
the equivalence class containing $\alpha$, also called ray:
\begin{equation}
  \noending{H}_\alpha
  :=
  \{
  x\in \noending{H}
  :
  (x,\alpha)\in\relation
  \}
  .
\end{equation}
We will denote by ${H}_\alpha$ the ray with
end-points:
\begin{equation}
  {H}_\alpha
  :=
  \{
  x\in H
  :
  (x,\alpha)\in\relation
  \}.
\end{equation}
The set $H_\alpha$ is closed, because $\relation$ is closed.
We will denote by $ \overline{\noending{H}_\alpha}$ the topological
closure of $ \noending{H}_\alpha$ in $H$.
By definition of closure $\overline{\noending{H}_\alpha}\subset
H_\alpha$.

\begin{remark}
As a consequence of non-branching, each of the sets $H_\alpha\cap\initial$
and $H_\alpha\cap\final$ contains at most one element.
Therefore, $\overline{\noending{H}_\alpha}=H_\alpha$.
Indeed, by contradiction assume
$\{x\}= H_\alpha\cap \initial\setminus \overline{\noending{H}_\alpha}$.
Since $x\leq \alpha$, by weakly convexity of $H$, there exists a $G$-causal curve $\gamma$ connecting $x$ to $\alpha$.
Since $\gamma_{t}\in \noending{H}_\alpha$, $t>0$, then $x=\gamma_0$ can be approximated by points in $\noending{H}_\alpha$.
\end{remark}

Since the causal relation $\mathcal{R}$ is  induced by $\leq$, we obtain the following consequence of 
\Cref{L:Requivalence}.

\begin{proposition}\label{C:parametrization}
Let $H$ be a null non-branching, weakly convex closed achronal set,
admitting a gauge $G$.
Then each equivalence class with end-points 
of $\mathcal{R}$ inside $H$ is contained in the image 
of a causal curve. 
\end{proposition}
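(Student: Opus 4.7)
The plan is to parametrize the whole ray $H_\alpha$ by a single causal curve, gluing together local $G$-causal segments via the gauge $G$. First I would observe that $H_\alpha$ is totally ordered by $\leq$: by \Cref{L:Requivalence}, any two points of $\noending{H}_\alpha$ are $\mathcal{R}$-related, i.e., causally comparable, and by \Cref{P:Hcausal} the relation $\leq$ restricted to $H$ is a partial order, ruling out nontrivial loops. The end-points of $H_\alpha$, belonging to $\initial\cup\final$, inherit the same comparability through weak convexity (any end-point is causally related to $\alpha$, hence to every other element of $H_\alpha$ by transitivity), so the whole $H_\alpha$ is totally ordered.

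Next, set $I:=G(\noending{H}_\alpha)\subset\R$ and consider $G$ restricted to $\noending{H}_\alpha$. This restriction is injective: if $x,y\in \noending{H}_\alpha$ satisfy $G(x)=G(y)$, then \Cref{P:not-causally-related} forces either $x=y$ or $x,y$ not causally related, and the latter is impossible in an equivalence class. Moreover, $I$ is an interval: given $x\leq y$ in $\noending{H}_\alpha$, \Cref{P:uniqueness-G-causal} provides the (unique) $G$-causal curve from $x$ to $y$ whose image lies in $\noending{H}_\alpha$ and whose $G$-values fill out $[G(x),G(y)]$. Let $\eta:I\to\noending{H}_\alpha$ be the inverse of $G|_{\noending{H}_\alpha}$. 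For any $s<t$ in $I$ the points $\eta_s\leq\eta_t$ belong to $\noending{H}$, so by uniqueness the $G$-causal curve connecting them must agree with $\eta|_{[s,t]}$ up to the affine identification. Hence $\eta$ is a causal curve on $I$ whose image is exactly $\noending{H}_\alpha$.

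Finally, I would attach to $\eta$ the at-most-two missing end-points of $H_\alpha$. By the non-branching hypothesis together with \Cref{R:intervalH}, each of $H_\alpha\cap\initial$ and $H_\alpha\cap\final$ contains at most one element; call them $m^-$ and $m^+$ when they exist. If $m^-$ exists, weak convexity yields a causal curve from $m^-$ to $\alpha$; reparametrizing its interior to be $G$-causal via \Cref{P:G-parametrization} and invoking property (2) of \Cref{defn:gauge} (which guarantees that the infimum of $G$ along this curve is finite), one obtains a finite limit value $a:=\inf_{x\in \noending{H}_\alpha}G(x)\in\R$. Extending $\eta$ to $\{a\}\cup I$ by setting $\eta_a:=m^-$, continuity at $a$ follows from the convergence of $\eta_{t}$ to $m^-$ along the constructed causal curve combined with non-branching (which forces any limit of $\eta_t$ as $t\downarrow a$ to coincide with $m^-$). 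The analogous argument on the right adds $m^+$. The resulting $\eta$ is a causal curve on a (possibly closed) interval whose image contains $H_\alpha$.

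The main obstacle is checking continuity of $\eta$ at the added end-points $m^{\pm}$, since the gauge need not be defined there. This is where property (2) of the gauge is indispensable: it provides the finite boundary value of the parameter and, combined with \Cref{P:G-parametrization} and the uniqueness coming from non-branching, forces the unique candidate extension to coincide with the end-point. Once this technical step is in place, the rest of the argument is routine gluing.
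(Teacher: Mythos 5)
Your proof is correct and takes essentially the same route as the paper's: both reconstruct the generator $H_\alpha$ as the inverse of $G|_{\noending{H}_\alpha}$ (the paper via the graph set $\zeta$, you directly), use \Cref{P:not-causally-related} for injectivity, show the domain is an interval and the resulting map is a causal curve, and then attach the at-most-two end-points by comparison with a $G$-causal segment from $m^\pm$ to $\alpha$ and non-branching. The only cosmetic differences are the lemma invoked for continuity on the interior (\Cref{P:characterization-G-causal} in the paper versus \Cref{P:uniqueness-G-causal} in yours) and the way the equality of the boundary parameter with $\inf G(\noending{H}_\alpha)$ is justified (a concatenation contradiction in the paper versus the direct identification of the two infima in yours); both rest on the same ingredients.
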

\begin{proof}
Fix $\alpha\in\noending{H}$.
Consider the set
\begin{equation}
\zeta : = \{ (s, z )\in  \R \times \noending{H}_\alpha \colon  G(z) = s +G(\alpha)  \}
\end{equation}
Let $I:=P_1(\zeta)\subset \R$.
We want to see that the set $\zeta$ is the graph of a $G$-causal
curve covering all $\noending{H}_\alpha$.
By \Cref{P:not-causally-related}, for any $s$ there exists at most one $z\in\noending{H}_\alpha$, such that $G(z)=s$.
We need to see that $I$ is an interval, i.e., that $I$ is connected.
Given $s,t\in I$, there exist $x,y\in \noending{H}_\alpha$, such that $(s,x),(t,y)\in\zeta$.
Thus either $x\leq y$ or $y\leq x$.
Take $\gamma$ a causal curve connecting $x$ to $y$, contained in $\noending{H}_\alpha$.
By continuity of $G\circ \gamma$, $I$ contains all the elements between $s$ and $t$
, hence it is an interval. 
Therefore, $\zeta$ is the graph of a function $\eta:I\to \noending{H}_\alpha$ and $\eta$ satisfies $G(\eta_{t})=G(\alpha)+t$.

It holds that $\eta_{s}\leq \eta_{t}$, for all $t\leq s$, therefore we can apply 
\Cref{P:characterization-G-causal} and deduce that $\eta$ is $G$-causal.
Finally, for any point $w\in \noending{H}_\alpha$, $(G(w)-G(\alpha),w)\in \zeta$, 
therefore $\eta_{I}=\noending{H}_\alpha$.

We now extend $\eta$ in order to cover the endpoints.
If $\initial\cap H_\alpha$ is empty there is nothing
to do, thus we consider $x\in\initial\cap H_\alpha$.
Let $\delta:[0,1]\to H$ be the $G$-causal
curve connecting $x$ to $\alpha$.
Let $r:=\inf_{t>0} G(\delta_{t})$.
We claim that $\inf I=r$.
If $\inf I<r$, then one can take the curve starting from
$\eta_{r-\epsilon}$, for $r-\epsilon\in I$, passing through $\delta_{1/2}$,
and reaching $\delta_1$, obtaining a contradiction; the other inequality
is trivial.
We can therefore define $\eta_{r}:= x$.
Both $G\circ \eta$ and $G\circ\delta$ are affine and increasing functions, thus 
$T:=(G\circ\delta)^{-1}\circ(G\circ\eta)$ 
is affine and increasing.
Since $G(\eta_{t})=G(\delta_{T(t)})$, for $\eta>r$, by defintion of $\zeta$ and $\eta$, we deduce $\eta_{t}=\delta_{T(t)}$, $t>r$.
As a consequence $\lim_{t\to r^+}\eta_{t}=\lim_{s\to 0^+} \delta_{s}=x$, proving 
that $\eta$ is continuous in $r$, and therefore causal.
\end{proof}

Notice that in~\Cref{C:parametrization} we have constructed a parametrization 
of each equivalence class with $G$-speed equal to one. 
We define the map $\gflow_{G}:\dom(\gflow_{G})\subset \noending{H}\times \R\to \noending{H}$, as the
map whose graph is given by
\begin{equation}
  \Graph(\gflow_{G})
  =
  \{
  (x,t,y):
  G(y)-G(x)=t
  \}
  .
\end{equation}
Following \Cref{C:parametrization}, we can extend $\gflow_{G}$ 
to cover also the possible end-points.

The next proposition characterizes the equivalence of gauges as an
affine transformation with transverse coefficients.

\begin{proposition}\label{P:characterization-equivalence}
Let $H$ be a null non-branching, weakly convex,
closed achronal set,
  admitting two gauges $G_1$ and $G_2$.
  Then $G_1\sim G_2$, if and only if $G_1=f+h G_2$, for some
  transverse functions $f,h$, with $h>0$.
\end{proposition}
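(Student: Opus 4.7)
The plan is to prove the two implications separately. The reverse implication $(\Leftarrow)$ is immediate from the definitions: suppose $G_1=f+hG_2$ with $f,h$ transverse, $h>0$. For any causal curve $\gamma$ with $\gamma_{(0,1)}\subset\noending{H}$, the transversality of $f,h$ gives that $f\circ\gamma$ and $h\circ\gamma$ are constant on $(0,1)$, so
\[
G_1\circ\gamma|_{(0,1)} = (f\circ\gamma) + (h\circ\gamma)\cdot(G_2\circ\gamma|_{(0,1)})
\]
is affine if and only if $G_2\circ\gamma|_{(0,1)}$ is (using $h>0$). Hence $\Caus_{G_1}=\Caus_{G_2}$.

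For the forward implication $(\Rightarrow)$, assume $\Caus_{G_1}=\Caus_{G_2}$. My plan is to use the ray decomposition: by \Cref{L:Requivalence}, $\relation$ is an equivalence relation on $\noending{H}$, and by \Cref{C:parametrization}, for every $\alpha\in\noending{H}$ the equivalence class $\noending{H}_\alpha$ admits a $G_2$-causal parametrization $\eta^\alpha\colon I^\alpha\to\noending{H}_\alpha$ with $G_2(\eta^\alpha_t)=G_2(\alpha)+t$ covering the whole ray. Since $\Caus_{G_2}=\Caus_{G_1}$, the curve $\eta^\alpha$ is also $G_1$-causal, hence $G_1\circ\eta^\alpha$ is affine with some slope $h_\alpha\in\R$. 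Because $\eta^\alpha$ is injective causal and $G_1$ is a gauge, $G_1\circ\eta^\alpha$ is strictly increasing, so $h_\alpha>0$. I would then define $h\colon\noending{H}\to(0,\infty)$ by $h(x):=h_{[x]_\relation}$ and $f(x):=G_1(x)-h(x)G_2(x)$. By construction, on each ray the identity $G_1=f+hG_2$ holds. The function $h$ is constant on each ray by definition, so it is transverse; for $f$, if $x,y$ lie on the same ray then
\[
f(y)-f(x) = (G_1(y)-G_1(x)) - h_\alpha\,(G_2(y)-G_2(x)) = 0
\]
exactly by the definition of $h_\alpha$ as the slope of $G_1$ along the $G_2$-parametrization, so $f$ is transverse as well.

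The main obstacle I expect is verifying \emph{Borel measurability} of $f$ and $h$, which is part of the definition of transverse function. I would handle this via the synthetic flow map $\gflow_{G_2}$ introduced after \Cref{C:parametrization}, whose graph is closed and hence Borel: on the Borel set where $\gflow_{G_2}(x,1)$ is defined, one can write
\[
h(x) = G_1(\gflow_{G_2}(x,1)) - G_1(x),
\]
which is Borel; for the remaining points one uses a countable exhaustion of $\noending{H}$ by the Borel sets $\{x:\gflow_{G_2}(x,1/n)\text{ is defined}\}$ and the corresponding formula with step $1/n$. Measurability of $f$ then follows from $f=G_1-hG_2$. Once measurability is in hand, $f,h$ are transverse functions in the sense of \Cref{D:equivm} with $h>0$, and $G_1=f+hG_2$, completing the proof.
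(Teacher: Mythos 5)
Your proof is correct and follows essentially the same route as the paper's: both proofs observe that the $G_2$-flow $\gflow_{G_2}(z,\cdot)$ (equivalently, the $G_2$-parametrization of the ray through $z$ from \Cref{C:parametrization}) is $G_2$-causal, hence $G_1$-causal by the equivalence, extract $h(z)$ as the slope of the affine function $G_1\circ\gflow_{G_2}(z,\cdot)$, and set $f:=G_1-hG_2$, verifying that $h$ and $f$ are constant on rays. The one point where you go beyond the paper is the explicit discussion of Borel measurability of $f$ and $h$; note, however, that the paper's \emph{definition} of a transverse function does not actually impose measurability (it only requires constancy along causal curves), so this verification, while good hygiene and relevant if one wants to multiply $\mm$ by $f$ as in \Cref{D:equivm}, is not strictly required by the statement being proved.
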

\begin{proof}
  The ``if'' part is trivial, so we consider the other implication.
  Fix $G_1\sim G_2$ two equivalent gauges, and let
  \[
  h(z):=
  (G_1\circ \Psi_{G_2}(z,\,\cdot\,))'(0)
  .
  \]
  Since 
  $\gflow_{G_2}(z,\,\cdot\,)$ is $G_2$-causal, hence $G_1$-causal, 
  therefore $G_1\circ\gflow_{G_2}(z,\,\cdot\,)$ is affine;
  it follows that the derivative in the equation  above exists
  and $h$ is transverse.
  Since
  \[
  G_2(\gflow_{G_1}(z,h(z)\, t))
  =G_2(z) +t
  ,
  \]
  then $\gflow_{G_1}(z,h(z)\, t)=\gflow_{G_2}(z, t)$.
  We can thus compute
  \begin{align*}
    \big((G_1- h G_2)\circ \gflow_{G_1}(z,t)\big)'
    &
    =
    1-h(z) \big(G_2\circ \gflow_{G_1}(z,t)\big)'
    \\
    &
    =
     1-h(z) \bigg(G_2\circ \gflow_{G_2}\bigg(z,\frac{t}{h(z)}\bigg)\bigg)'
    =
   0
    ,
  \end{align*}
  therefore $G_1-h G_2=:f$ is transverse.
\end{proof}

The next proposition guarantees the existence of a $\mathcal{A}$-measurable cross-section for the
equivalence relation $\relation$.
Here $\mathcal{A}$ is the $\sigma$-algebra generated 
by the set of analytic sets, i.e., projections of Borel sets inside a Polish space.

Recall also that we use the notation $x<y$ to denote  $x\leq y, \, x\neq y$.

\begin{proposition}[Measurable selection]\label{P:cross-section}
Let $H$ be a null non-branching, causal, weakly convex closed achronal set.
Then there exists a $\mathcal{A}$-measurable quotient map $\QQ :
\noending{H} \to \noending{H}$ for the equivalence relation
$\mathcal{R}$, i.e.,
for all $x \in H$, $(x,\QQ(x)) \in \relation$  and 
\[
\QQ(x) = \QQ(y) \iff (x,y) \in \relation(x).
\]
\end{proposition}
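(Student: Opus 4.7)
The plan is to apply a measurable selection theorem in the spirit of analytic uniformization, exploiting the fact that $\mathcal{R}$ is closed in $H\times H$ (since $\leq$ is closed in the topological causal space and $H$ is closed, so $\mathcal{R} = (J\cup J^{-1})\cap(H\times H)$ is closed as well) and that each equivalence class is $\sigma$-compact (being contained in the image of a causal curve, cf.\ \Cref{C:parametrization}). The goal is to produce a representative of each class in a way that depends only on the class itself, so that the resulting map automatically satisfies $\QQ(x) = \QQ(y) \iff (x,y)\in\mathcal{R}$.

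The key construction is a lexicographic distance-minimization procedure. Fix a proper metric $d$ on $X$ inducing the topology (available by assumption) and a countable dense sequence $\{z_n\}_{n\in\N}\subset\noending{H}$. For $x\in\noending{H}$, denote $\mathcal{R}^H(x) := \{y \in H : (x,y)\in\mathcal{R}\}$, a closed subset of $H$. I would define inductively a nested sequence of non-empty compact subsets of $\mathcal{R}^H(x)$ by
\[
F_0(x) := \mathcal{R}^H(x)\cap\overline{B}(z_0, d(z_0,\mathcal{R}^H(x))), \qquad F_{n+1}(x) := \argmin_{y\in F_n(x)} d(z_{n+1},y).
\]
Properness of $d$ ensures that each $F_n(x)$ is non-empty and compact; nestedness together with density of $\{z_n\}$ in $H$ forces $\bigcap_n F_n(x)$ to be a singleton $\{p(x)\}$ (if $y,y'$ lay in the intersection then $d(z_n,y)=d(z_n,y')$ for all $n$, hence $y=y'$ by density). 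By construction $p(x)$ depends only on the closed set $\mathcal{R}^H(x)$, hence only on the equivalence class of $x$. Analytic-measurability of $p$ follows from iterated application of the Jankov--von Neumann selection theorem: the multifunction $x\mapsto F_n(x)$ has $\mathcal{A}$-measurable graph at every stage, since each $x\mapsto d(z_n,\mathcal{R}^H(x))$ is $\mathcal{A}$-measurable, and the intersection of a decreasing sequence of $\mathcal{A}$-measurable multifunctions with compact values is $\mathcal{A}$-measurable.

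The main obstacle will be ensuring $p(x)\in\noending{H}$ rather than $p(x)\in \initial\cup\final$, because the distance-minimizer could coincide with an endpoint of the class. Since by \Cref{R:intervalH} each equivalence class contains at most one initial and one final point, the bad set $B := \{x : p(x)\in \initial\cup\final\}$ is analytic and $\mathcal{R}$-saturated. On $B$ I would rerun the same construction with $\mathcal{R}^H(x)$ replaced by $\mathcal{R}^H(x)\setminus\{p(x)\}$, which is still $\sigma$-compact and non-empty (it contains $x$ itself, as $x\in\noending{H}$ implies $x\neq p(x)$). After at most two such iterations (one to avoid the potential initial endpoint, one to avoid the potential final one) the selected point lies in $\noending{H}$. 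Patching the constructions on the complementary analytic sets yields the desired $\mathcal{A}$-measurable quotient map $\QQ:\noending{H}\to\noending{H}$ that is constant exactly on the equivalence classes of $\mathcal{R}$.
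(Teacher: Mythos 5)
The proposal has a genuine gap at the endpoint-avoidance step. The iterated $\argmin$ over the closed set $\mathcal{R}^H(x)$ is sound (properness of the metric and closedness of $\mathcal{R}$ give non-empty compact $F_n(x)$, and density of $\{z_n\}$ makes the intersection a singleton), and $p(x)$ does depend only on the class, since $H_\alpha=\overline{\noending{H}_\alpha}$ by the remark following \Cref{L:Requivalence}. But the fix you propose when $p(x)\in\initial\cup\final$ --- rerunning the scheme over $\mathcal{R}^H(x)\setminus\{p(x)\}$ --- breaks down because that set is \emph{not closed}: an endpoint of a ray is a limit of points of $\noending{H}_\alpha$, so removing it leaves a set whose closure is still $H_\alpha$. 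Hence $\inf_{y\in\mathcal{R}^H(x)\setminus\{p(x)\}} d(z_0,y)=d(z_0,\mathcal{R}^H(x))$, and whenever $p(x)$ is the unique minimizer of $d(z_0,\cdot)$ on $\mathcal{R}^H(x)$ this infimum is not attained, so the first stage $F_0$ of the rerun is empty. A concrete instance: a ray isometric to $[0,1]$ with $0\in\initial$, and $z_0$ strictly closer to $0$ than to every other point of the ray. The $\sigma$-compactness you invoke does not rescue the $\argmin$; the construction needs closedness. Swapping $\mathcal{R}^H(x)\setminus\{p(x)\}$ for $\noending{H}_x$ meets the same obstruction, since that set is likewise non-closed.

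This is precisely the issue the paper's proof is engineered to sidestep: it selects, inside a suitably small precompact neighborhood $U_n$ drawn from a countable base, the $\leq$-maximal element $v$ of the class within $\overline{U_n\cap\noending{H}}$, and then proves $x<v<z$ for some $y<x<z$ in $\noending{H}_x$ via a causal/topological separation argument (the neighborhood $W$ of $J^-(y)\cup J^+(z)$ disjoint from $U_n$), so $v$ is automatically an interior point of the generator. Your distance-lexicographic selection is a legitimate classical device for producing class-dependent representatives, but to make it work here you would need a different mechanism to stay inside $\noending{H}$ from the outset, rather than a posteriori removal of the bad point, and it is not clear how to build one since $\initial\cup\final$ is not closed. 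As a secondary point, the assertion that each iterated multifunction $F_n$ has $\mathcal{A}$-measurable graph also deserves a proof: $\gr F_0$ is Borel, but the value function $x\mapsto d(z_{n+1},F_n(x))$ feeding into $F_{n+1}$ is, a priori, only upper analytic, so later graphs live in the $\sigma$-algebra generated by analytic sets rather than the Borel $\sigma$-algebra; this is compatible with the desired $\mathcal{A}$-measurability but should be argued, not asserted.
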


\begin{proof}
  We construct a cross-section $Q\subset\noending{H}$, as follows.
  Let $(U_{n})_{n}$ be a countable base for the topology of $X$, such that $U_{n}$
  are precompact.
  Consider the compact sets $C_{n}:=\overline{U_{n}\cap\noending{H}}\subset H$.
  Define
  \begin{align}
    &
      \begin{aligned}
      A_{n}
      :=
        P_1((J^{-1}\setminus\Delta)\cap (C_{n}\times C_{n}))
        =
        \{x\in C_{n}\colon \exists y\in C_{n}, x<y\}
        ,
      \end{aligned}
  \end{align}
  The sets $A_{n}$ are $\sigma$-compact, as they are projections of
  intersections of $\sigma$-compact sets ($\Delta$ is the diagonal,
  whose complementary is open, thus $\sigma$-compact,
  by the properness of the topology).
  Define
  \begin{align}
    \label{eq:maximal-elements}
    &
    L_{n}
    :=
    C_{n}\cap \noending{H}
      \setminus A_{n}
      =
      \{
      x\in
      C_{n}
      \cap \noending{H}
      \colon
      \nexists y\in C_{n}
      ,
      x<y
      \}
    ,
  \end{align}
  which is a Borel set.
  Define
    \begin{align*}
    B_{n}
      :=
    P_1(\relation\cap(\noending{H}\times L_{n}))
        \cap 
        \noending{H}
    =
    \{ x\in \noending{H}\colon
       \text{the set }
       \noending{H}_{x}\cap L_{n}
       \text{ is non-empty}
       \}
    ,
    \end{align*}
  which is an analytic set.
  Finally, define
  \begin{equation}
    E_{n}:=
    B_{n}
    \setminus
    \bigg(
    \bigcup_{i=1}^{n-1}
    B_1
    \bigg)
    ,
    \qquad
    Q_{n}
    :=
    L_{n}
    \cap E_{n}
    ,
  \end{equation}
  which belong to the $\sigma$-algebra generated by analytic sets.
  Define 
\begin{equation}\label{eq:def:Qpm}
Q:=\bigcup_{n=1}^{\infty} Q_{n}\subset V.
  \end{equation}
  Next, we show that  the set $Q$ is indeed a cross-sections.

{\bf Step 1.} For all $x\in\noending{H}$, $n\in\N$,  $L_{n}\cap
      \noending{H}_{x}$ contains at most one element.
      Fix $x\in\noending{H}\cap\relation(U)$.
  Assume on the contrary that there exists $v,w\in L_{n}\cap
  \noending{H}_{x}$, $w\neq v$.
  Then either $v\leq w$ or $w\leq v$.
  If $v\leq w$, then $v\notin L_{n}$, because~\eqref{eq:maximal-elements} would be contradicted by $y=w$.
  Analogously, $w \leq v$ leads to a contradiction. 

{\bf Step 2.} For all $x\in\noending{H}$, there exists $n\in\N$, such that
  $L_{n}\cap\noending{H}_{x}$
  contain one element.
  
      Fix $x\in\noending{H}$.
  Then there exist two points $y,z\in \noending{H}_{x}$,
  such that $y< x< z$.
  Since $H$ is causal, $x\notin J^{-}(y)\cup J^{+}(z)$, 
  therefore there exist $n$ and $W$, such that $U_{n}$
  is a neighborhood of $x$, and $W$ a neighborhood of
  $J^{-}(y)\cup J^{+}(z)$, such that $W\cap U_{n}=\emptyset$.
  Thus, $C_{n}\cap W=\emptyset$.
  Let $\gamma:[0,1]\to \noending{H}$ be a causal curve connecting $x$
  to $z$.
  Let $\bar t:=\sup\{t:\gamma_{t}\in C_{n}\}<1$ and let $v=\gamma_{\bar{t}}\in C_{n}\cap \noending{H}_{x}$.

  We claim that $v\in L_{n}$; if not, there exists $w\in C_{n}$, such
  that $v<w$, thus $w\in \noending{H}_\alpha$.
  If $w\in J^{+}(z)\subset W$, then we obtain a contradiction with
  $C_{n}\cap W=\emptyset$, thus $w\leq z$; by forward
  non-branching, there exists $s\in (\bar t, 1)$, such that
  $w=\gamma_{s}$, a contradiction with the definition of $\bar t$.

  We notice that $v\neq x$, otherwise $x\in \partial U_{n}$, a
  contradiction with the fact that $U_{n}$ is neighborhood of $x$.
  %
%  In particular $x<y$.

An immediate consequence of the last step is that for all $x\in
  \noending{H}$ there exists $n$, such that $x\in B_{n}$.
  Therefore, it holds that
  $Q_{n}\cap \noending{H}_{x}= L_{n}\cap\noending{H}_{x}$, for the
  smallest $n$, such that $x\in B_{n}$.
  As a consequence, for all $x$, $Q\cap \noending{H}_{x}$ is a
  singleton.

\textbf{Step 3.} Construction and measurability of the quotient map $\QQ$.
\\  Define the map $\QQ$ as $\QQ(x)=\alpha$, if
  $(x,\alpha)\in\relation$ and $\alpha\in Q$.
  In other words, the graph of the map $\QQ$ is
  given by
  \begin{equation}
    \Graph \QQ
    =
    \relation
    \cap
    ( \noending{H}
    \times
    Q
    )
    .
\end{equation}
We next show that the map $\QQ$ is measurable.
We adopt the convention that $\times$ is computed before $\cap$.
  Fix  an open set $W\subset X$ and compute
  \begin{align*}
    \QQ^{-1}(W)
    &
      =
      P_1(
    \relation
    \cap
     \noending{H}
      \times
      (
      Q
      \cap W)
      )
      \overset{\eqref{eq:def:Qpm}}{=}
      P_1
      \bigg(
    \relation
    \cap
    \noending{H}
      \times
      \bigcup_{n=1}^{\infty}
      \big(
    L_{n}
      \cap W
    \cap E_{n}
      \big)
      \bigg)
    \\
    &
      =
      \bigcup_{n=1}^{\infty}
      P_1
      (
    \relation
    \cap
    \noending{H}
      \times
      (
    L_{n}
      \cap W
    \cap E_{n}
      )
      )
    \\
      \overset{\eqref{eq:stable-set}}&{=}
      \bigcup_{n=1}^{\infty}
      P_1
      (
    \relation
    \cap
    \noending{H}
      \times
      (
      L_{n}
      \cap W
      )
      \cap
      E_{n}\times
      E_{n}
      )
    \\
    &
      =
      \bigcup_{n=1}^{\infty}
      P_1
      (
    \relation
    \cap
    \noending{H}
      \times
      (
      L_{n}
      \cap W
      )
      )
      \cap
    E_{n}
      ,
  \end{align*}
  which belongs to the $\sigma$-algebra generated by analytic sets;
  we have used the fact
  \begin{equation}
    \label{eq:stable-set}
    \relation\cap
    \noending{H}
    \times
    (
    E_{n} \cap Y
    )
    =
    \relation\cap
    E_{n}
    \times
    E_{n}
    \cap
    \noending{H}\times Y
    ,
    \qquad
    \forall Y\subset X
    .
    \qedhere
  \end{equation}
\end{proof}

Once a measurable quotient map is available, one can invoke Disintegration theorem and obtain a strongly consistent 
disintegration formula for any element of the class $[\mm]$ with respect to $\mathcal{R}$. 
We refer to~\cite[Section 4.2]{CaMo:20} for this implication in the synthetic Lorentzian setting; for metric spaces we point to the references 
the cited in~\cite[Section 4.2]{CaMo:20} (see
in particular~\cite[Th.~3.4]{CaMoLap}). 
 We only recall that we will denote by $Q$ the quotient set associated to $\QQ$, i.e.\ $Q = \QQ(H)$.

\begin{theorem}\label{T:disintegration}
Let $(H,G,\mm)$ be a weakly convex, null non-branching synthetic null hypersurface.
Then the following disintegration formula holds:
\[
%\mm \llcorner_{H} =
\mm\llcorner_{\noending{H}} = \int_{Q} \mm_{\alpha} \, \qq(\de \alpha),
\]
where $\qq$ is a Borel probability measure over $Q \subset \noending{H}$ such that 
$\QQ_{\sharp} \mm\llcorner_{\noending{H}}  \ll \qq$ and the map 
$Q \ni \alpha \mapsto \mm_{\alpha} \in \mathcal{M}_{+}(\noending{H})$ satisfies the following properties:
\begin{enumerate}
\item for any $\mm$-measurable set $B$, the map $\alpha \mapsto \mm_{\alpha}(B)$ is $\qq$-measurable;% \smallskip
\item for $\qq$-a.e. $\alpha \in Q$, $\mm_{\alpha}$ is concentrated on $\QQ^{-1}(\alpha) = H_{\alpha}$ (strong consistency);% \smallskip
\item for any $\mm$-measurable set $B\subset\noending{H}$ and $\qq$-measurable set $C$, the following disintegration formula holds: 
\[
\mm(B \cap \QQ^{-1}(C)) = \int_{C} \mm_{\alpha}(B) \, \qq(\de \alpha);
\]
\item  For every compact subset $\mathcal K\subset X$ there exists a constant $C_{\mathcal K}\in (0,\infty)$ such that
\[
 \mm_{\alpha}(\mathcal K) \leq C_{\mathcal K}, \quad \text{for $\qq$-a.e. $\alpha\in Q$.}
\]
\end{enumerate}
Moreover, fixed any $\qq$ as above such that $\QQ_{\sharp} \mm \ll \qq$, the disintegration is $\qq$-essentially unique.
\end{theorem}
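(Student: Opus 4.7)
The plan is to derive the disintegration formula by combining the measurable selection of Proposition~\ref{P:cross-section} with the classical Disintegration Theorem for Radon measures on Polish spaces, following the framework developed in \cite[Section~4.2]{CaMo:20} and the general criterion of \cite[Th.~3.4]{CaMoLap}. Since the topology $\mathfrak{T}$ is Polish and proper and $H$ is closed, $H$ is itself Polish and $\mm$ is Radon on it; moreover, the $\mathcal{A}$-measurable quotient map $\QQ:\noending{H}\to Q\subset\noending{H}$ realizes the equivalence relation $\relation$, which is indeed an equivalence by Proposition~\ref{L:Requivalence}.

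The plan is to first localize: cover $X$ by a countable exhaustion of precompact open sets $(U_{n})_{n}$, work with each $\mm\llcorner_{U_{n}}$ (which is finite by the Radon property), and glue the resulting disintegrations. Setting $\qq_{n}:=\QQ_{\sharp}(\mm\llcorner_{\noending{H}\cap U_{n}})$, the classical Disintegration Theorem produces a $\qq_{n}$-measurable family $\alpha\mapsto\mm_{\alpha,n}$ satisfying properties (1) and (3) by construction. Strong consistency (2), namely that $\mm_{\alpha}$ is concentrated on $\QQ^{-1}(\alpha)=\noending{H}_\alpha\subset H_\alpha$, follows from the fact that $\QQ$ is a genuine quotient onto the measurable cross-section $Q$, so \cite[Th.~3.4]{CaMoLap} applies. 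One then passes to any $\qq$ with $\QQ_{\sharp}\mm\ll\qq$ by a Radon--Nikodym reweighting, and the standard uniqueness statement of the Disintegration Theorem yields the $\qq$-essential uniqueness.

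The main obstacle is property (4), the essential uniform bound $\mm_{\alpha}(\mathcal{K})\leq C_{\mathcal{K}}$: the identity $\int_{Q}\mm_{\alpha}(\mathcal{K})\,\qq(\de\alpha)=\mm(\mathcal{K}\cap\noending{H})<\infty$ only yields $L^{1}$-integrability in $\alpha$, not an essential supremum bound. To upgrade to $L^{\infty}$, I would exploit the $G$-parametrization of rays provided by Proposition~\ref{C:parametrization}: on a fixed compact set $\mathcal{K}$, after discarding the $\mm$-negligible set of initial and final points (Theorem~\ref{T:initialpoints} or Proposition~\ref{P:extrimal-negligible}), condition (2) in Definition~\ref{defn:gauge} of a gauge implies that $G$ is bounded on $\mathcal{K}\cap\noending{H}$, so every ray meets $\mathcal{K}$ in a set whose $G$-image is contained in an interval of uniformly bounded length. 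Pulling back each conditional $\mm_{\alpha}$ via $\gflow_{G}(\alpha,\,\cdot\,)$ produces a family of measures on a bounded subset of $\R$ whose total mass on $\mathcal{K}$ can be uniformly controlled; combined with a careful choice of the cross-section $Q$ (refining the construction in Proposition~\ref{P:cross-section} so that representatives lie in a fixed compact region) and a Fubini-type argument against $\qq$, this yields the required constant $C_{\mathcal{K}}$.
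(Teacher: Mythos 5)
Your overall strategy --- realize the equivalence relation $\relation$ via the measurable quotient map of Proposition~\ref{P:cross-section}, note that $\relation$ is indeed an equivalence relation on $\noending{H}$ by Proposition~\ref{L:Requivalence}, and invoke the Disintegration Theorem for Radon measures on Polish spaces in the form of \cite[Th.~3.4]{CaMoLap} following \cite[Sec.~4.2]{CaMo:20} --- is exactly the route the paper takes (the paper in fact supplies no detail beyond these citations), so your treatment of items (1)--(3), of strong consistency, and of $\qq$-essential uniqueness is in order.

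The argument you sketch for property (4) is, however, flawed at two distinct points. First, condition (2) in Definition~\ref{defn:gauge} controls $G$ only along \emph{individual causal curves} with domain $[0,1]$; the gauge $G$ is merely assumed Borel, and nothing in the axioms forces $G$ to be bounded on a compact set $\mathcal{K}\cap\noending{H}$. Second, and more fundamentally, even if each ray met $\mathcal{K}$ in a set whose $G$-image lies in a uniformly bounded interval, this controls only the \emph{support} of the pulled-back one-dimensional conditionals, not their total \emph{mass}: a Radon measure concentrated on a bounded interval of $\R$ can have arbitrarily large mass, so no uniform $C_{\mathcal{K}}$ follows. Additionally, discarding the sets of initial and final points via Theorem~\ref{T:initialpoints} or Proposition~\ref{P:extrimal-negligible} is not available here, as both results assume $\NC^e(N)$, which Theorem~\ref{T:disintegration} does not; it is also unnecessary, since the statement concerns only $\mm\llcorner_{\noending{H}}$.

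The clean route to (4) --- the one implicit in the references the paper points to --- is a normalization trick rather than anything involving the gauge. Since $(X,\mathfrak{T})$ is proper and $\mm$ is Radon, there exists a continuous $\phi:X\to(0,1]$ with $\int\phi\,\de\mm=1$. Set $\tilde\mm:=\phi\,\mm\llcorner_{\noending{H}}\in\Prob(\noending{H})$, disintegrate $\tilde\mm=\int_Q\tilde\mm_\alpha\,\qq(\de\alpha)$ with $\qq:=\QQ_\sharp\tilde\mm$ and each $\tilde\mm_\alpha$ a probability measure, and finally set $\mm_\alpha:=\phi^{-1}\tilde\mm_\alpha$. Then for any compact $\mathcal{K}\subset X$ one has $\mm_\alpha(\mathcal{K})\leq\big(\min_{\mathcal{K}}\phi\big)^{-1}\tilde\mm_\alpha(\mathcal{K})\leq\big(\min_{\mathcal{K}}\phi\big)^{-1}=:C_{\mathcal{K}}$, finite by continuity and strict positivity of $\phi$ on the compact set $\mathcal{K}$, for \emph{every} $\alpha$. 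This supplies property (4) for this particular choice of $\qq$, which is what the existential part of the theorem requires; for a general $\qq$ with $\QQ_\sharp\mm\llcorner_{\noending{H}}\ll\qq$ one only retains the $\qq$-essential uniqueness, as the reweighted conditionals need not satisfy (4).
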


A few comments are in order.
If $\mm_1\sim \mm_2$ and $\qq\in\Prob(Q)$ is such that
$\q\ll\QQ_{\sharp}\mm_1\llcorner_{\noending{H}}$, then also $\qq\ll\QQ_{\sharp}\mm_2\llcorner_{\noending{H}}$.
Moreover, if $\mm_2\llcorner_{\noending{H}}=f\mm_1\llcorner_{\noending{H}}$ for some transverse function, then
$\mm_{2,\alpha}=f(\alpha)\mm_{2,\alpha}$, for $\q$-a.e.\ $\alpha\in Q$.

\section{Light-like Optimal Transport  and localization}\label{Sec:OT-Loc}

In this section, we will establish existence and uniqueness results for the optimal transport problem along a synthetic null hypersurface $H$, and use these to localize the synthetic Ricci curvature lower bounds to the measures $\mm_\alpha$  concentrated on the null generators $H_\alpha$ of $H$, as in the disintegration  \Cref{T:disintegration}.

\subsection{Existence and uniqueness of a monotone light-like Optimal Transport}

\begin{definition}[Monotone set]\label{D:monotoneset}
Given $(Y,\leq)$ a partially ordered set, $A\subset Y \times Y$ is said to be \emph{monotone}
if 
\[
x_1 \leq x_2, \ x_1 \neq x_2
\Longrightarrow y_1\leq y_2, \quad \text{for all } (x_1,y_1), (x_2,y_2) \in A.
\]
\end{definition}

\begin{definition}[Monotone plans]\label{D:monotoneplan}
  Given a topological causal space $(X,\ll,\leq,\mathfrak{T})$ and two
  probability measures $\mu_0,\mu_1\in \Prob(X)$, a coupling
  $\pi \in \Pi_{\leq}(\mu_{0},\mu_{1})$ is called \emph{monotone} if
  it is concentrated on a Borel set $A \subset X\times X$ that is
  monotone in the sense of \Cref{D:monotoneset} in $(X,\leq)$.
\end{definition}

\begin{lemma}\label{lem:monotone}
  Let $H$ be a null non-branching, weakly convex, 
  closed achronal set admitting 
  a gauge function $G$.
  Let $A\subset J\cap (\noending{H}\times \noending{H})$.
  The following are equivalent.
  \begin{enumerate}
  \item
    The set $A$ is monotone in $(H,\leq)$.
  \item
    for all $\alpha\in Q$, there exists a monotone set
    $B_\alpha\subset\R\times\R$, such that
    \begin{equation}
      A\cap (\noending{H}_\alpha\times \noending{H}_\alpha)
      \subset
      \gflow_{G}(\alpha,\,\cdot\,)
      \otimes
      \gflow_{G}(\alpha,\,\cdot\,)
      (B_\alpha)
    \end{equation}
  \end{enumerate}
\end{lemma}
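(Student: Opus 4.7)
The plan is to reduce the monotonicity of $A$ to a condition that can be checked one ray at a time, exploiting the fact that on each equivalence class $\noending{H}_\alpha$ the parametrization $\gflow_{G}(\alpha,\,\cdot\,)$ identifies the causal order with the natural order of the real parameter.

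I would first record the key confinement observation: since $A\subset J$, any pair $(x,y)\in A$ satisfies $x\leq y$, hence $(x,y)\in\relation$, and so $x,y$ belong to the same ray. Consequently, if $(x_1,y_1),(x_2,y_2)\in A$ with $x_1\leq x_2$ and $x_1\neq x_2$, then $x_1,x_2$ are causally related, hence in the same ray $\noending{H}_\alpha$, and this in turn forces $y_1,y_2$ into $\noending{H}_\alpha$ as well. In other words, the definition of monotonicity of $A$ only ever compares pairs sharing a single ray, and so the property decouples across the rays of $\relation$.

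Next I would make precise the order-isomorphism along a single ray. By \Cref{C:parametrization}, $\gflow_{G}(\alpha,\,\cdot\,)$ is a bijective $G$-causal parametrization of $\noending{H}_\alpha$ by an interval $I_\alpha\subset\R$; and by \Cref{P:not-causally-related} two distinct points of the same ray are causally related with distinct gauge values. Combined with the fact that $G$ is strictly increasing along injective causal curves, this yields the equivalence
\[
x\leq y\ \Longleftrightarrow\ G(x)\leq G(y)\ \Longleftrightarrow\ s\leq t,
\qquad x=\gflow_{G}(\alpha,s),\ y=\gflow_{G}(\alpha,t),
\]
for all $x,y\in\noending{H}_\alpha$.

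With these two ingredients, (1)$\Rightarrow$(2) follows by setting
\[
B_\alpha:=(\gflow_{G}(\alpha,\,\cdot\,)\otimes\gflow_{G}(\alpha,\,\cdot\,))^{-1}\bigl(A\cap(\noending{H}_\alpha\times\noending{H}_\alpha)\bigr),
\]
since the order-isomorphism transfers monotonicity of $A$ on the ray directly to monotonicity of $B_\alpha$ in $\R\times\R$. For the converse (2)$\Rightarrow$(1), given comparable pairs in $A$, the confinement step puts all four points in a single $\noending{H}_\alpha$, the order-isomorphism turns $x_1<x_2$ into $s_1<s_2$, monotonicity of $B_\alpha$ gives $t_1\leq t_2$, and a final application of the isomorphism returns $y_1\leq y_2$. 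I expect no serious obstacle: the only point demanding care is the ``same ray'' confinement, which is precisely what makes the reduction to each $\alpha\in Q$ legitimate.
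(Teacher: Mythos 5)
Your proposal is correct and follows essentially the same route as the paper's proof: choose $B_\alpha$ as the preimage of $A$ under $\gflow_{G}(\alpha,\cdot)\otimes\gflow_{G}(\alpha,\cdot)$ for (1)$\Rightarrow$(2), and for (2)$\Rightarrow$(1) observe that comparable pairs in $A$ are confined to a single ray, on which the gauge parametrization is an order-isomorphism. The only difference is that you spell out the confinement and the order-isomorphism more explicitly than the paper, which simply asserts them; the substance is identical.
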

\begin{proof}
  (1) $\implies$ (2).
  \quad
  Take as
  $B_\alpha:=( \gflow_{G}(\alpha,\,\cdot\,) \otimes \gflow_{G}(\alpha,\,\cdot\,))^{-1}(A)$.
  The fact that $B_\alpha$ is monotone is trivial.

  (2) $\implies$ (1).
  \quad
  Fix $(x_1,y_1),(x_2,y_2)\in A$.
  If $x_1< x_2$, then all these four points belong to the
  same class $H_\alpha$.
  Then $x_{i}=\gflow_{G}(\alpha, t_{i})$ and $s_{i}=\gflow_{G}(\alpha, s_{i})$, for some
  $(t_{i},s_{i})\in B_\alpha$, $i=1,2$.
  The assumption $x_1\leq x_2$ gives $t_1\leq t_2$ and the
  monotonicity of $B_\alpha$ gives $s_1\leq s_2$, concluding the
  proof.
\end{proof}

\begin{proposition}\label{P:monotone-transport-plan}
Let $H$ be a non-branching, weakly convex,
closed achronal set, admitting a gauge function $G$.
Let
$\mu_0,\mu_1\in \Prob(\noending{H})$ such that 
$\Pi_{\leq}(\mu_0,\mu_1)\neq \emptyset$.

Then there exists a unique monotone coupling
$\pi \in \Pi_{\leq}(\mu_0,\mu_1)$.
\end{proposition}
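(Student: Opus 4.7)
The strategy is to reduce the transport problem on $H$ to a family of one-dimensional monotone transport problems on the rays $\noending{H}_\alpha$, by combining the disintegration \Cref{T:disintegration} with the gauge parametrization $\gflow_G$. First, I would observe that every $\pi \in \Pi_\leq(\mu_0,\mu_1)$ is concentrated on $\relation \cap (\noending{H} \times \noending{H})$, since the $\mu_i$ are supported in $\noending{H}$ and any pair $(x,y)$ with $x \leq y$ in $\noending{H}$ lies in the same equivalence class of $\relation$. In particular $\QQ_\sharp \mu_0 = \QQ_\sharp \mu_1 =: \q_\mu$ on $Q$, and \Cref{T:disintegration} yields $\mu_i = \int_Q \mu_{i,\alpha}\, \q_\mu(\de\alpha)$ with $\mu_{i,\alpha}$ concentrated on $\noending{H}_\alpha$.

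By \Cref{C:parametrization}, the flow $\gflow_G(\alpha,\cdot)$ is an order-preserving bijection from an interval $I_\alpha\subset\R$ onto $\noending{H}_\alpha$; pulling back, set $\tilde\mu_{i,\alpha} := \gflow_G(\alpha,\cdot)^{-1}_\sharp \mu_{i,\alpha} \in \Prob(I_\alpha)$. For existence I would apply the classical one-dimensional monotone-rearrangement construction ray-wise: for $\q_\mu$-a.e.\ $\alpha$, let $\tilde\pi_\alpha$ be the unique monotone coupling between $\tilde\mu_{0,\alpha}$ and $\tilde\mu_{1,\alpha}$, given explicitly by $(F_{\tilde\mu_{0,\alpha}}^{-1},F_{\tilde\mu_{1,\alpha}}^{-1})_\sharp \L^1\llcorner_{[0,1]}$. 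Since $\Pi_\leq(\mu_0,\mu_1)\neq\emptyset$, disintegrating any causal coupling along $\relation$ produces a coupling of $\tilde\mu_{0,\alpha},\tilde\mu_{1,\alpha}$ concentrated on $\{s\leq t\}$ for $\q_\mu$-a.e.\ $\alpha$, which forces the stochastic dominance $F_{\tilde\mu_{0,\alpha}}\geq F_{\tilde\mu_{1,\alpha}}$; hence $\tilde\pi_\alpha$ is itself supported in $\{s\leq t\}$. Setting $\pi_\alpha := (\gflow_G(\alpha,\cdot),\gflow_G(\alpha,\cdot))_\sharp \tilde\pi_\alpha$ and $\pi := \int_Q \pi_\alpha\, \q_\mu(\de\alpha)$, one checks that $\pi\in\Pi_\leq(\mu_0,\mu_1)$, and the implication (2)$\Rightarrow$(1) of \Cref{lem:monotone}, applied ray-wise, shows that $\pi$ is monotone.

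Uniqueness is symmetric. Given any monotone $\pi'\in\Pi_\leq(\mu_0,\mu_1)$, the implication (1)$\Rightarrow$(2) of \Cref{lem:monotone} implies that, after disintegrating $\pi'$ along $\relation$ and pulling back to $I_\alpha^2$ via $\gflow_G(\alpha,\cdot)^{-1}\otimes\gflow_G(\alpha,\cdot)^{-1}$, each ray-wise component is a monotone coupling of $\tilde\mu_{0,\alpha},\tilde\mu_{1,\alpha}$. One-dimensional uniqueness then gives $\tilde\pi'_\alpha = \tilde\pi_\alpha$ for $\q_\mu$-a.e.\ $\alpha$, whence $\pi' = \pi$.

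The main obstacle is the measurability bookkeeping: since $\QQ$ is only measurable with respect to the $\sigma$-algebra generated by analytic sets, one must verify that the maps $\alpha\mapsto \tilde\mu_{i,\alpha}$ and $\alpha\mapsto\tilde\pi_\alpha$ are measurable in this sense, and that the glued object $\pi$ is a genuine Borel probability measure on $H\times H$. This is handled via the explicit quantile-function formula, whose ingredients depend measurably on the marginal measures $\tilde\mu_{i,\alpha}$, and by the fact that the disintegration in \Cref{T:disintegration} is already stated relative to the analytic $\sigma$-algebra associated with $\QQ$.
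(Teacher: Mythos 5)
Your proof is correct and follows essentially the same route as the paper: disintegrate along the ray equivalence relation $\relation$ (the paper disintegrates the given coupling $\bar\pi$ via $\QQ\circ P_1$, you disintegrate the marginals $\mu_i$ directly via $\QQ$, but by essential uniqueness of disintegrations these give the same ray-wise marginals), pass via $\gflow_G(\alpha,\cdot)$ to one-dimensional problems on intervals, take the quantile coupling, establish ray-wise causality (the paper cites~\cite[Lemma~6.4]{CMM24a} where you spell out the stochastic-dominance argument, which is what that lemma encapsulates), and glue. The appeal to \Cref{lem:monotone} for both directions and the quantile-uniqueness for the uniqueness step also mirror the paper's argument; the measurability remarks at the end are at the same level of rigor as the paper's ``routine calculation.''
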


\begin{proof}
  Let $Q\subset\noending{H}$ be a measurable cross-section of $\relation$ and let
  $\QQ:\noending{H}\to Q$ be the quotient map.
  \smallskip
  
{\bf Step 1.} Existence.
 Let $\bar\pi\in\Pi_{\leq}(\mu_0,\mu_1)$.
  Consider the
  map 
  \begin{equation}\label{eq:defQ1}
  \QQ^1:=\QQ\circ P_1: \noending{H}\times \noending{H}
  \to Q,
  \end{equation}
  where $P_1$ is the projection on
  the first variable.
  By definition of coupling, we get
  \begin{equation}
    \qq:=
    \QQ_{\sharp}(\mu_1)
    =
    \QQ^1_{\sharp}(\bar\pi)
    .
  \end{equation}
  We are therefore in position to apply the disintegration theorem,
  deducing that
  \begin{equation}
    \bar\pi
    =
    \int_{Q}
    \bar\pi_\alpha
    \,
    \qq(\de\alpha)
    ,
  \end{equation}
  where the probability measures $\bar\pi_\alpha$ are concentrated on
  the fibers of $\QQ^1$.
  Since $\pi(J)=1$, then the measure $\bar\pi_\alpha$ are concentrated
  on
  \begin{equation}
    (\QQ^1)^{-1}(\alpha)
    \cap J
    =
    \{(x,y)\in \noending{H}: (x,\alpha)\in\relation\}
    \cap J
    \subset
    \noending{H}_\alpha
    \times
    \noending{H}_\alpha
    .
  \end{equation}
  At this point, for $i=0,1$, we let
  \begin{equation*}
    \tilde{\bar{\pi}}_\alpha
    :=
    (
    \gflow(\alpha,\,\cdot\,)^{-1}
    \otimes
    \gflow(\alpha,\,\cdot\,)^{-1}
    )_{\sharp}
    \bar\pi_\alpha
    \in\Prob(\R^2)
    \quad
    \text{ and }
    \quad
    \tilde\mu_{i}
    :=
    (P_{i})_{\sharp}    \tilde{\bar{\pi}}_\alpha
    \in\Prob(\R).
  \end{equation*}
  Let $\tilde\pi_\alpha\in \Prob(\R^2)$ be the monotone rearrangement
  of $\tilde\mu_0$ into $\tilde\mu_1$, and let
  $B_\alpha:=\supp\tilde\pi_\alpha$.
By~\cite[Lemma~6.4]{CMM24a},
    we have that 
    \[B_\alpha\subset\{(x,y) \in \R^2: x\leq y\}.\]
  Define
  \begin{equation}
    \pi:=
    \int_{Q}
    (
    \gflow_{G}(\alpha,\,\cdot\,)
    \otimes
    \gflow_{G}(\alpha,\,\cdot\,)
    )_{\sharp}
    \tilde\pi_{\alpha}
    \,
    \de\alpha
    .
  \end{equation}
  Since $\gflow_{G}(\alpha,\,\cdot\,)$ is order-preserving
  and $\tilde{\bar{\pi}}_\alpha$ is concentrated on 
  $B_\alpha\subset\{(x,y)\colon y\geq x\}$,  $\pi$ is concentrated on $J$.
Finally, it is a routine calculation to see that $\pi$ is coupling between $\mu_0$ and $\mu_1$.

\smallskip

{\bf Step 2.} Uniqueness.
  Let $\pi^1$ and $\pi^2$ be two monotone transport plans.
  It holds that
  \begin{equation}
    \QQ^1_{\sharp}\pi^1
    =
    \QQ^1_{\sharp}\pi^2
    =:
    \q
    .
  \end{equation}
  We can therefore apply the disintegration theorem, finding that
  \begin{equation}
    \pi^{i}
    =
    \int_{Q}
    \pi^i_\alpha
    \,
    \q(\de\alpha).
  \end{equation}
Let
\begin{equation*}
    \tilde\pi^k_\alpha
    :=
    ((\gflow_{G}(\alpha,\,\cdot\,))^{-1}\otimes
    (\gflow_{G}(\alpha,\,\cdot\,))^{-1})_{\sharp}\pi^k_\alpha
    \in\Prob(\R^2).
\end{equation*}
Since $\tilde\pi_\alpha^1$ and $\tilde\pi_\alpha^2$ have the same
marginals,
by the uniqueness of
the monotone rearrangement,
we deduce
$\tilde\pi_\alpha^1=\tilde\pi_\alpha^2$ giving $\pi^1 = \pi^2$.
\end{proof}

\begin{proposition}\label{P:G-causal-transport-plan}
  Let $H$ be a null non-branching, weakly convex, closed achronal set, admitting a gauge function $G$.
  Let $\mu_0,\mu_1\in \Prob(\noending{H})$ be
  two probability measures, and assume there exists a causal coupling
  $\pi\in\Pi_{\leq}(\mu_0,\mu_1)$.

  Then there exists a unique $\nu\in\OptGeo^{G}(\mu_0,\mu_1)$, $G$-causal dynamical transport
  plan, such that $(e_0\otimes
  e_1)_{\sharp}\nu=\pi$.
\end{proposition}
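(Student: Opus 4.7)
The plan is to use the measurable selection $\Gamma : H\times\noending{H}\cup\noending{H}\times H \to \Caus_G$ built in \Cref{R:measurablity-Gamma}, which sends a comparable pair $(x,y)$ (with at least one endpoint in $\noending{H}$) to the unique $G$-causal curve joining them. Since $\mu_0,\mu_1$ are concentrated on $\noending{H}$ and $\pi$ is concentrated on the causal relation $J$, $\pi$-a.e.\ $(x,y)$ lies in the domain of $\Gamma$, so the pushforward
\[
\nu := \Gamma_\sharp \pi
\]
is a well-defined Borel probability measure on $C([0,1];H)$, supported on $\Caus_G$ (modulo the diagonal, on which one uses the constant curve as a degenerate $G$-causal representative, or equivalently restricts $\pi$ to $\{x<y\}$ and handles the diagonal separately).

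The first step is to verify that $\nu \in \OptGeo^G(\mu_0,\mu_1)$ and $(e_0,e_1)_\sharp \nu = \pi$. Both properties follow directly from the identities $e_0\circ \Gamma(x,y) = x$ and $e_1\circ \Gamma(x,y) = y$: pushing forward one gets $(e_0)_\sharp\nu = (P_1)_\sharp\pi = \mu_0$, $(e_1)_\sharp\nu = (P_2)_\sharp\pi = \mu_1$, and $(e_0,e_1)_\sharp\nu = (P_1,P_2)_\sharp\pi = \pi$. The Borel measurability of $\Gamma$, established in \Cref{R:measurablity-Gamma} from the fact that its graph is closed, is what guarantees that $\nu$ is a Borel measure and that $\nu(\Caus_G) = 1$.

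The second step is uniqueness. Suppose $\nu' \in \OptGeo^G(\mu_0,\mu_1)$ also satisfies $(e_0,e_1)_\sharp\nu' = \pi$. Then $\nu'$-almost every $\gamma \in \Caus_G$ has endpoints $\gamma_0,\gamma_1 \in \noending{H}$ (since $\mu_0$ and $\mu_1$ are concentrated there) and is a $G$-causal curve between them. By \Cref{P:uniqueness-G-causal}, which crucially exploits the null non-branching and weak convexity hypotheses, the $G$-causal curve from $\gamma_0$ to $\gamma_1$ is unique whenever $\gamma_0\neq\gamma_1$, so necessarily $\gamma = \Gamma(\gamma_0,\gamma_1)$ for $\nu'$-a.e.\ $\gamma$. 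Equivalently, $\nu' = \Gamma_\sharp (e_0,e_1)_\sharp \nu' = \Gamma_\sharp\pi = \nu$.

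The main subtlety I expect is not the abstract selection argument but rather the conventions concerning the diagonal $\{x=y\}\subset H\times H$, where constant curves are excluded from $\Caus$ by \Cref{D:causalcurve}, so one must either verify that the diagonal part of $\pi$ is handled by declaring constant paths to be vacuously $G$-causal, or split $\pi = \pi\llcorner_{\Delta} + \pi\llcorner_{\Delta^c}$ and treat the two contributions separately (trivially for the diagonal, via $\Gamma$ for the off-diagonal). Once this bookkeeping is settled, existence and uniqueness both reduce, via $\Gamma$, to the pointwise uniqueness statement \Cref{P:uniqueness-G-causal}, which is where all the structural hypotheses (null non-branching, weak convexity, existence of a gauge) are used.
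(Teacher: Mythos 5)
Your proof is correct and follows essentially the same route as the paper: both construct $\nu := \Gamma_\sharp \pi$ from the measurable selection $\Gamma$ of \Cref{R:measurablity-Gamma} and read off the marginal/coupling identities from $e_i \circ \Gamma(x,y) = x_i$. The paper's proof is a three-line remark that omits both the explicit uniqueness step (which your invocation of \Cref{P:uniqueness-G-causal} correctly supplies) and the bookkeeping about the diagonal $\{x=y\}$ — a genuine wrinkle, since \Cref{D:causalcurve} excludes constant curves, so $\Gamma$ is a priori undefined there and the paper's restriction of $\pi$ to $B = J \cap (\noending{H}\times\noending{H})$ does not by itself resolve it; your suggestion to split $\pi$ over $\Delta$ and $\Delta^c$ (or to adopt a convention treating constant paths as degenerate $G$-causal curves) is exactly the kind of clean-up that would make the paper's argument airtight.
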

\begin{proof}
  The plan $\pi$ gives full
  measure to $B:=J\cap (\noending{H}\times \noending{H})$.
  Consider the map $\Gamma:B\to C_{G}$ given by 
  \Cref{R:measurablity-Gamma}, such that, for each 
  $(x_0,x_1)\in B$, $\Gamma\big((x_0,x_1)\big)_{i}=x_{i}$, $i=0,1$.
  Then  $\nu:=\Gamma_{\sharp}\pi$, satisfies the claim.
\end{proof}

\begin{corollary}\label{C:monotone-G-causal}
  Let $H$ be a null non-branching, weakly convex, closed achronal set,
  admitting a gauge function $G$. 
  Let $\mu_0,\mu_1\in \Prob(\noending{H})$ be
  two probability measures, such that $\Pi_{\leq}(\mu_0,\mu_1)\neq\emptyset$.

  Then there exists $\nu\in\OptGeo^{G}(\mu_0,\mu_1)$, a unique $G$-causal, dynamical optimal
  transport plan, such that 
  $(e_0,e_1)_{\sharp}\nu$ is a  monotone coupling.
\end{corollary}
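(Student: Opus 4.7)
The plan is to combine the two preceding propositions directly. First I would apply \Cref{P:monotone-transport-plan} to obtain the unique monotone coupling $\pi \in \Pi_{\leq}(\mu_0,\mu_1)$; this is where the hypothesis $\Pi_{\leq}(\mu_0,\mu_1) \neq \emptyset$ gets used. Then I would feed this $\pi$ into \Cref{P:G-causal-transport-plan}, which produces a $G$-causal dynamical plan $\nu \in \OptGeo_G(\mu_0,\mu_1)$ with $(e_0,e_1)_\sharp \nu = \pi$. By construction, $(e_0,e_1)_\sharp \nu = \pi$ is monotone, so $\nu$ has the required properties.

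For uniqueness, suppose $\nu' \in \OptGeo_G(\mu_0,\mu_1)$ is another $G$-causal dynamical plan such that $\pi' := (e_0,e_1)_\sharp \nu'$ is a monotone coupling. Then $\pi' \in \Pi_\leq(\mu_0,\mu_1)$ is monotone, so the uniqueness part of \Cref{P:monotone-transport-plan} forces $\pi' = \pi$. Now both $\nu$ and $\nu'$ are $G$-causal dynamical plans with endpoint marginal equal to $\pi$, so the uniqueness part of \Cref{P:G-causal-transport-plan} gives $\nu' = \nu$.

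There is essentially no obstacle here, since both ingredients are already fully developed; the only thing worth pointing out in the write-up is that $\mu_0, \mu_1$ are concentrated on $\noending{H}$ (as required by the two propositions used), which is part of the hypothesis. The statement is thus just the natural composition: monotonicity at the level of couplings lifts uniquely to the dynamical level via $\Gamma$, and the inverse direction (restricting a dynamical plan to its endpoints) preserves monotonicity tautologically.
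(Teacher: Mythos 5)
Your proposal is correct and is essentially the paper's proof, which simply says ``Combine \Cref{P:monotone-transport-plan} with \Cref{P:G-causal-transport-plan}''; you have merely written out the composition, including the uniqueness step, in the obvious way.
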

\begin{proof}
    Combine
  \Cref{P:monotone-transport-plan} with
  \Cref{P:G-causal-transport-plan}.
\end{proof}

As the following example shows, the requirement that the marginal
measures give no mass to the endpoints is necessary for the existence of a monotone coupling and 
it cannot be weakened by the requirement that only one
measure is concentrated on $\noending{H}$.

  \begin{example}
    Consider the $2$-dimensional Minkowski space $\R^2$ and let
    $H=\{(x,|x|)\}$.
    Consider the measures
    \[\mu_0=\frac{1}{2}(\delta_{(0,0)}+\delta_{(1,1)}), \quad  \mu_1=\frac{1}{2}(\delta_{(-2,2)}+\delta_{(2,2)}).\]
    The only transport plan $\pi\in\Pi_\leq(\mu_0,\mu_1)$ is
    induced by a map $T$ given by $T(0,0)=(-2,2)$ and
    $T(1,1)=(2,2)$.
    Notice that
    \[
    \supp \pi=\{((0,0),(-2,2)),((1,1),(2,2))\}.
    \]
    %
    % $\pi=\frac{1}{2}(\delta_{((0,0),}+\delta_{1,1})$ and
    %
    For this set, it holds that $(1,1)\in J^{+}(0,0)$, but
    $(2,2)\notin J^+(-2,2)$, thus the plan is not monotone.
\end{example}

\subsection{Localization of the \NEC}

\begin{theorem}\label{T:localization}
  Let $(H,G,\mm)$ be a null-non-branching 
  synthetic null hypersurface.
  Then the following are equivalent.
  \begin{itemize}
      \item 
  $(H,G,\mm)$ satisfies the \NEC\ $\NC^{e}(N)$.

\item
    Considering the disintegration given by \Cref{T:disintegration},
 for $\q$-a.e.\ $\alpha\in Q$, the metric measure space
  $(H_\alpha, |G(\,\cdot\,) - G(\,\cdot\,)|, \mm_\alpha)$ 
  satisfies the $\CD(0,N-1)$ condition.
  That is, using the identification $\gflow_{G}(\alpha,\,\cdot\,)$ between $H_\alpha$ 
  and a real interval $I_\alpha\subset \R$,
  the measure $\mm_\alpha$ is absolutely continuous and, denoting
  by $h_\alpha:I_\alpha\to[0,\infty)$ its density, 
  $t\mapsto h_\alpha^{1/(N-2)}(t)$
 is concave.
 \end{itemize}
\end{theorem}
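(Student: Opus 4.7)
The proof establishes an equivalence, and both directions hinge on the unique monotone $G$-causal dynamical plan produced by \Cref{C:monotone-G-causal}. Its endpoint coupling is monotone, hence supported in $\relation$; in particular, every causal curve in its support is contained in a single equivalence class $H_\alpha$, so the entire transport is \emph{ray-preserving}. In combination with \Cref{T:disintegration}, this reduces the question on $H$ to a measurable family of 1D questions on the generators $(H_\alpha, |G(\,\cdot\,)-G(\,\cdot\,)|, \mm_\alpha)$.

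\emph{Forward direction} ($\NC^e(N)\Rightarrow$ 1D $\CD(0,N-1)$ on $\qq$-a.e.\ ray). Since individual rays are typically $\qq$-null, one cannot test $\NC^e(N)$ directly with ray-supported measures, so we localize by thin tubes. Fix a Lebesgue density point $\alpha\in Q$ together with two absolutely continuous probability measures $\eta_0,\eta_1$ supported on a compact sub-interval of $I_\alpha$; lift them through the gauge flow $\gflow_G(\beta,\,\cdot\,)$ to every nearby ray $H_\beta$ and average against $\qq|_{A_k}/\qq(A_k)$ for a shrinking neighborhood $A_k\subset Q$ of $\alpha$, producing probability measures $\mu_0^k,\mu_1^k\in\Prob(H)$ admitting a causal coupling. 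Applying $\NC^e(N)$ yields $\nu^k\in\OptGeo_G(\mu_0^k,\mu_1^k)$; by the uniqueness clause of \Cref{C:monotone-G-causal}, $\nu^k$ must be the ray-preserving monotone plan. Passing the resulting concavity of $t\mapsto \U_{N-1}(\mu_t^k|\mm)$ to the limit $k\to\infty$, via the entropy disintegration and Lebesgue differentiation of $\qq$ at $\alpha$, one extracts concavity of $\U_{N-1}(\eta_t|\mm_\alpha)$ along the 1D Wasserstein geodesic on $H_\alpha$. Varying $\eta_0,\eta_1$ over a dense test family gives $\CD(0,N-1)$ on $(H_\alpha,|G(\,\cdot\,)-G(\,\cdot\,)|,\mm_\alpha)$ for $\qq$-a.e.\ $\alpha$, which is classically equivalent to concavity of $h_\alpha^{1/(N-2)}$.

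\emph{Backward direction} (1D $\CD(0,N-1) \Rightarrow \NC^e(N)$). Given $\mu_0,\mu_1\in\Prob(H)$ with $\Pi_\leq(\mu_0,\mu_1)\neq\emptyset$, take the monotone plan $\nu\in\OptGeo_G(\mu_0,\mu_1)$ of \Cref{C:monotone-G-causal} and set $\mu_t:=(e_t)_\sharp\nu$. Ray preservation makes $q:=\QQ_\sharp\mu_t$ independent of $t$, and \Cref{T:disintegration} gives $\mu_t=\int_Q \mu_{t,\alpha}\, q(\de\alpha)$ with each $t\mapsto \mu_{t,\alpha}$ the 1D $L^2$-Wasserstein geodesic between $\mu_{0,\alpha}$ and $\mu_{1,\alpha}$ on the generator. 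Writing the $\mm$-density of $\mu_t$ as $\rho_t(x)=\tilde\rho_{t,\alpha(x)}(x)\cdot(\de q/\de\qq)(\alpha(x))$ and observing that the transverse factor $\de q/\de\qq$ does not depend on $t$, the pointwise 1D density inequality supplied by concavity of $h_\alpha^{1/(N-2)}$ lifts to a pointwise inequality for $\rho_t$. Integrating this inequality against the interpolating measure yields the required concavity of $\U_{N-1}(\mu_t|\mm)$.

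\emph{Main obstacle.} The subtle step is the aggregation in the backward direction. The formal entropy disintegration
\begin{equation*}
\Ent(\mu_t|\mm)=\int_Q \Ent(\mu_{t,\alpha}|\mm_\alpha)\, q(\de\alpha)+\Ent(q|\qq)
\end{equation*}
only yields convexity of $t\mapsto \Ent(\mu_t|\mm)$, which is \emph{not} enough to conclude concavity of $\U_{N-1}(\mu_t|\mm)=\exp(-\Ent(\mu_t|\mm)/(N-1))$: a naive Jensen-type bound fails because $\exp$ is convex, not concave. The transfer from rays to the whole space must therefore be performed at the level of densities, through a Borell--Brascamp--Lieb inequality with exponent $1/(N-2)$ applied ray-wise and then integrated against $q$, carefully absorbing the transverse density $\de q/\de\qq$ into the exponent. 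This is the technical heart of the argument, and mirrors the localization philosophy developed in the metric-measure and timelike Lorentzian settings.
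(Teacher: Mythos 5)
Your backward direction is in line with the paper's approach (the paper defers to the arguments of \cite{CMM24a}, Section 7, which are precisely of this Borell--Brascamp--Lieb/density-inequality type), and you correctly flag the obstruction that entropy convexity alone does not yield concavity of the entropy power, so the aggregation must happen at the density level.

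The forward direction, however, has a genuine gap, precisely at the step where you write that ``by the uniqueness clause of \Cref{C:monotone-G-causal}, $\nu^k$ must be the ray-preserving monotone plan.'' \Cref{C:monotone-G-causal} asserts that the monotone $G$-causal dynamical plan is unique \emph{among monotone ones}; it does not assert that every $G$-causal dynamical plan with the prescribed marginals is monotone. The definition of $\NC^e(N)$ only guarantees the existence of \emph{some} $\nu^k\in\OptGeo^G(\mu_0^k,\mu_1^k)$ satisfying the concavity inequality, with no control over which plan it is. Null non-branching forces any such $\nu^k$ to be ray-preserving (each causal curve lies on a single generator), so the disintegration $\nu^k_\beta$ has the correct ray-wise marginals; but the one-dimensional endpoint coupling $(e_0,e_1)_\sharp\nu^k_\beta$ need not be the monotone rearrangement, and therefore the interpolation $t\mapsto (e_t)_\sharp\nu^k_\beta$ need not be a $W_2$-geodesic on $I_\beta$. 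Passing to the limit via Lebesgue differentiation would then yield concavity of $\U_{N-1}$ along a curve of measures which is not a geodesic, which is not what the $\CD(0,N-1)$ condition requires.

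The paper circumvents this by a specific choice of test measures: the target $\mu_1$ is taken to be a $\qq$-superposition of Dirac masses $\delta_{r_\alpha}$, one per ray. For such data $\Pi_\leq(\mu_0,\mu_1)$ is a singleton (mass on each ray has only one possible destination), hence $\OptGeo^G(\mu_0,\mu_1)$ is a singleton by \Cref{P:G-causal-transport-plan}, so the plan from $\NC^e(N)$ is forced to be the unique (trivially monotone) one. This first yields the one-dimensional $\MCP(0,N-1)$ condition ray-wise, which gives $\L^1$-absolute continuity of the conditional measures $\tilde\mm_\alpha$; the paper then upgrades to $\CD(0,N-1)$ by invoking the arguments of \cite{CMM24a}, Section 8.2. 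Your approach could be repaired by replacing the absolutely continuous target $\eta_1$ with a Dirac mass (or by first establishing $\MCP$ and then upgrading), but as written the extraction of ray-wise geodesics from $\nu^k$ is unjustified.
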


\begin{proof}
First notice that by \Cref{R:absolutelycontinuous}, 
the $\NC^{e}(N)$ condition implies that $H$ is weakly convex. Hence \Cref{T:disintegration} applies.

  One can check that $(H_\alpha, |G(\,\cdot\,) - G(\,\cdot\,)|)$ is a metric space isometric to the metric 
  space  $(I_\alpha,|\cdot-\cdot|)$, via the map $\gflow_{G}(\alpha,\,\cdot\,)$.
  Therefore, it is sufficient to check
  that $(I_\alpha,|\,\cdot\,|,\tilde\mm_\alpha)$ satisfies the $\CD(0,N-1)$
  condition, where
  $\tilde\mm_\alpha:=((\Psi_{G}(\alpha,\,\cdot\,))^{-1})_{\sharp}\mm_\alpha$.

We first show that for $\q$-a.e.\ $\alpha$, the measure $\tilde\mm_\alpha$ satisfies the $\MCP(0,N-1)$ condition.
Suppose the contrary.
Then there exists $\epsilon>0$, such that the set
\begin{align*}
    B:=
    \{ \alpha \in Q \colon \; &
\exists \mu_{0,\alpha}\in \Prob(\R)
,
\exists r_\alpha\in\supp \tilde\mm_\alpha
,
\exists t_\alpha\in (0,1)
\colon
\\
&
\U_{N-1}(\mu_{t_\alpha,\alpha}|\tilde\mm_\alpha)
\leq
(1-t_\alpha)
\U_{N-1}(\mu_{0,\alpha}|\tilde\mm_\alpha)
-\epsilon
\}
\end{align*}
has positive $\q$-measure, where $\mu_{t_\alpha}$ is the Wasserstein 
interpolation at time $t_\alpha$ between $\mu_{0,\alpha}$ and $\delta_{r_\alpha}$.
Then the set
\begin{align*}
    C=
    \Big\{
    &
    (\alpha,t)\in Q\times [0,1]
    \colon
    \U_{N-1}(\mu_{t,\alpha}|\tilde\mm_\alpha)
    \leq
    (1-t)
    \U_{N-1}(\mu_{0,\alpha}|\tilde\mm_\alpha)
    -
    \frac{\epsilon}{2}
    \Big\}
\end{align*}
has positive $\q\otimes\L^1$-measure.
Indeed, by upper semicontinuity of $t\mapsto \U_{N-1}(\mu_{t,\alpha}|\tilde\mm_\alpha)$, 
for any $\alpha\in B$, the slice $C(\alpha)$ is a non-empty open set.
By Fubini's Theorem, there exists $\bar t\in (0,1)$ such that 
the inequality in the definition of $C$ is verified on a subset of $B$, still denoted by $B$, of positive $\q$-measure.
Integrating the said inequality at $\bar t$ over $B$ 
contradicts the $\NC^e(N)$ inequality between 
\[
\mu_0
:=
\frac{1}{\q(B)}
\int_{B}
\mu_{0,\alpha}
\,
\q(\de\alpha)
\qquad
\text{ and }
\qquad
\mu_1
:=
\frac{1}{\q(B)}
\int_{B}
\delta_{r_\alpha}
\,
\q(\de\alpha)
.
\]
Moreover $\OptGeo^G(\mu_0,\mu_1)$ has only one element: indeed, since $\Pi_{\leq}(\mu_0,\mu_1)$ is a singleton, then \Cref{P:G-causal-transport-plan} ensures that there exists a unique $G$-causal dynamical plan from $\mu_0$ to $\mu_1$.

A consequence of the $\MCP(0,N-1)$ condition is that the measures $\tilde\mm_\alpha$ are $\L^1$-absolutely continuous.
Therefore arguing as in~\cite[Sec.~8.2]{CMM24a}, 
one can prove that $\mm_\alpha$ satisfies the $\CD(0,N-1)$ condition.

The proof of the converse implication follows the arguments 
of~\cite[Sec.~7]{CMM24a}.
\end{proof}

\begin{remark}
    As a consequence of the proof, if $\mm$ has full support
    in $H$ then, for $\q$-a.e.\ $\alpha\in Q$, $\mm_\alpha$ 
    has full support in $H_\alpha$.
\end{remark}

\begin{remark}[A Brenier-like result]
    As a consequence of \Cref{T:localization}, one can prove a Brenier-like theorem; i.e, 
    if $(H,G,\mm)$ is a null non-branching 
  synthetic null hypersurface satisfying $\NC^e(N)$, if $\mu_0\ll\mm$ 
    and $\Pi_{\leq}(\mu_0,\mu_1)\neq\emptyset$, then the unique monotone coupling 
    given by \Cref{P:monotone-transport-plan} is induced by a map.
    The proof follows standard arguments of $L^1$-Optimal Transport (see, e.g.,~\cite[Sec.~3.6.1]{Cavalletti17}), once proved that the disintegration of $\mm$ along the transport set satisfies the property stated in the second item of  \Cref{T:localization}.
\end{remark}

\begin{remark}
The second item in \Cref{T:localization} shall be seen as a synthetic counterpart of the $\NC^1(N)$ condition introduced in~\cite[Def.~5.13]{CMM24a}  in the smooth setting. 
Thus  \Cref{T:localization} shall be seen as a synthetic counterpart of the equivalence between the $\NC^e(N)$ and $\NC^1(N)$ conditions, proved in the smooth framework in~\cite[Sec.~8]{CMM24a}.
\end{remark}

\section{Synthetic Hawking's Area Theorem}

In this section, as an application of the theory
developed above, we provide a new formulation of Hawking's Area Theorem, where the assumption that the Ricci
curvature is non-negative on null vectors is replaced by the
$\NC^e$ condition.

Firstly, we give a suitable
definition for the area of a cross-section that is compatible with the smooth one.  It will be convenient to consider also its local version; 
for this purpose,  recall the notation 
\[\relation (A)=(J^-(A)\cup J^+(A))\cap H,\]
for any subset $A\subset H$ of the  null non-branching synthetic null hypersurface  $(H,G,\mm)$.

\begin{definition}
  Let $(H,G,\mm)$ be a null non-branching synthetic null hypersurface.
  Let $S\subset {H}$ be a set.
We define the (future) $\epsilon$-enlargement of $S$ by:
\begin{equation}
    S^{+}_\epsilon:=
    \{x\in\noending{H}
    \colon\gflow_{G}(x,-r)\in S
    ,
    r\in(0,\epsilon)
    \}.
\end{equation}
The (future) Minkowski content of $S$ is defined by
\begin{align}
    &
      \mm_{G}^{+}(S)
    :=
    \limsup_{\epsilon\to 0^{+}}
    \frac{
      \mm(S^+_{\epsilon})
      }{\epsilon}
      .
\end{align}
Let $A\subset H$. The relative (future) Minkowski content of $S$ with respect to $A$ is  defined by 
\begin{equation}\label{eq:minkowski-relativo}
  \mm^{+}_{G}(S;A)
  :=
  \limsup_{\epsilon\to 0^{+}}
  \frac{
    \mm(S^+_{\epsilon}\cap \relation (A))
  }{\epsilon}
  .
\end{equation}
\end{definition}

The definition below  shall be read as a synthetic counterpart
of the smooth notion of a 
null hypersurface being future geodesically complete.

\begin{definition}
Let $H$ be a 
  closed achronal set admitting a gauge function $G$.
  We say that $H$ is \emph{future complete for $G$},  if for all  $z\in \noending{H}$
\begin{equation}\label{eq:defHFCG-nononb}
     \sup_{y\in J^{+}(z)\cap \noending{H} }G(y)=\infty.
\end{equation}
\end{definition}

Notice that if $H$ is null non-branching, than future completeness 
is equivalent to requiring that for all  $z\in \noending{H}$
\begin{equation}\label{eq:defHFCG}
     \sup_{y\in \noending{H}_{z}}G(y)=\infty 
     ;
\end{equation}
   or, equivalently, if  
  for all $z\in \noending{H}$ and
  all $t>0$ it holds that $(z,t)\in\Dom(\Psi_{G})$.

We are now in position to state and prove Hawking's area theorem.

\begin{theorem}\label{eq:HawSynt}
  Let  $(H,G,\mm)$ be a null non-branching, synthetic null hypersurface, satisfying the $\NC^e(N)$
  condition. Assume that $H$ is future complete for $G$ and that $\supp \mm=H$.  
  Let $S_1, S_2\subset \noending{H}$ be two acausal sets with $S_1\subset J^{-}(S_2)$. Then
  \begin{equation}
    \mm^+_{G}(S_1)
    \leq
    \mm^+_{G}(S_2)
    ,
  \end{equation}
  where the value $+\infty$ is admitted.
\end{theorem}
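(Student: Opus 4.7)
The plan is to reduce the area comparison to a pointwise monotonicity statement for the conditional densities along null generators, via the one-dimensional localization provided by \Cref{T:localization}. Since $\NC^e(N)$ implies weak convexity (\Cref{R:absolutelycontinuous}), \Cref{T:disintegration} gives
\[
\mm\llcorner_{\noending{H}} = \int_{Q} \mm_\alpha\, \qq(\de\alpha),
\]
with each $\mm_\alpha$ supported on the generator $H_\alpha$; identifying $H_\alpha$ with $I_\alpha \subset \R$ via $\gflow_G(\alpha,\cdot\,)$, \Cref{T:localization} asserts that $\mm_\alpha = h_\alpha\,\mathcal{L}^1$ with $h_\alpha^{1/(N-2)}$ concave on $I_\alpha$.

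The key synthetic replacement for the Raychaudhuri focusing argument is the following. Future completeness for $G$ (see~\eqref{eq:defHFCG}) forces $I_\alpha$ to be unbounded above for $\qq$-a.e.\ $\alpha$. A non-negative concave function on a half-line unbounded above is automatically non-decreasing: otherwise, at a point where the right-derivative is strictly negative, concavity would make all later slopes negative, driving the function to $-\infty$ and contradicting non-negativity. Hence $h_\alpha^{1/(N-2)}$, and thus $h_\alpha$, is non-decreasing on $I_\alpha$.

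Next I would unpack the Minkowski content through the disintegration. Since $S_i \subset \noending{H}$ is acausal and $H$ is weakly convex and null non-branching, each generator $H_\alpha$ meets $S_i$ in at most one point, so $\QQ|_{S_i}$ is injective onto $Q_i := \QQ(S_i)$ and we may define the crossing time $t_\alpha^i := G(S_i \cap H_\alpha)$ for $\alpha \in Q_i$ (measurability of $\alpha\mapsto t_\alpha^i$ follows from a standard measurable-selection argument applied to $\QQ^{-1}(\alpha) \cap S_i$). By definition of $S_i^+_\epsilon$,
\[
S_i^+_\epsilon \cap H_\alpha = \gflow_G\bigl(\alpha, (t_\alpha^i, t_\alpha^i+\epsilon)\bigr)\quad\text{if } \alpha \in Q_i,
\]
and empty otherwise, so the disintegration formula gives
\[
\mm(S_i^+_\epsilon) = \int_{Q_i} \int_{t_\alpha^i}^{t_\alpha^i+\epsilon} h_\alpha(s)\,\de s\,\qq(\de\alpha).
\]
Finally, $S_1 \subset J^-(S_2)$ combined with null non-branching implies $Q_1 \subset Q_2$ and $t_\alpha^1 \leq t_\alpha^2$ for every $\alpha \in Q_1$: any $x \in S_1$ has a causal successor $y\in S_2$ which, by non-branching, must lie on the same generator $H_{\QQ(x)}$, and acausality of $S_2$ makes it unique. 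Using the monotonicity of $h_\alpha$ established above, $\int_{t_\alpha^1}^{t_\alpha^1+\epsilon} h_\alpha \leq \int_{t_\alpha^2}^{t_\alpha^2+\epsilon} h_\alpha$, and the containment $Q_1\subset Q_2$ gives $\mm(S_1^+_\epsilon) \leq \mm(S_2^+_\epsilon)$ for every $\epsilon>0$. Dividing by $\epsilon$ and taking $\limsup_{\epsilon\to 0^+}$ yields the claim.

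The principal obstacle is the careful justification of the integral representation of $\mm(S_i^+_\epsilon)$ under the disintegration --- specifically, verifying the measurability of the crossing-time maps and handling the interaction between $S_i^+_\epsilon$ and possible endpoints of the generators so that no $\mm$-mass is missed or double-counted. The conceptual heart of the argument, by contrast, is quite short: it is the observation that the \emph{synthetic} null energy condition (concavity of $h_\alpha^{1/(N-2)}$) combined with future completeness is \emph{equivalent in its consequences} to the classical ``non-negative expansion'' statement of Hawking's theorem.
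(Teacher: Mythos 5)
Your proof is correct and follows essentially the same route as the paper's: localize via \Cref{T:localization}, observe that future completeness makes $I_\alpha$ unbounded above so that concavity of $h_\alpha^{1/(N-2)}$ forces monotonicity of the density, then slide the $\epsilon$-enlargements along generators using $S_1\subset J^-(S_2)$ and null non-branching. You are slightly more explicit than the paper (introducing crossing times $t_\alpha^i$ and spelling out the interval comparison), but the key steps are identical; the only thing you never invoke is the hypothesis $\supp\mm=H$, which the paper uses to guarantee that $h_\alpha$ is positive on an unbounded interval before concluding non-decrease — a step your ``non-negative concave on an unbounded domain implies non-decreasing'' observation makes redundant provided one reads the conclusion of \Cref{T:localization} as concavity of $h_\alpha^{1/(N-2)}$ on all of $I_\alpha$ rather than only on $\{h_\alpha>0\}$, which is the reading the paper's statement supports.
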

\begin{proof}
  We use \Cref{T:disintegration} and we adopt its notation.
Since $H=\supp \mm$ and $H$ is future 
complete for $G$, then the density $h_\alpha$ of $\mm_\alpha$ is positive on a unbounded interval.
Since by Theorem~\ref{T:localization}  $h_\alpha^{1/(N-2)}$ is concave, we infer that $h_\alpha$ is non-decreasing.
The assumption $S_1\subset J^{-}(S_2)$ implies that
$\QQ((S_1)^{+}_\epsilon)\subset \QQ((S_2)^{+}_\epsilon)$.
Using that  $(S_{i})_\epsilon^+ \subset \noending{H}$
\begin{align*}
\mm((S_2)_\epsilon^+)
&
= \int_{\QQ((S_2)_\epsilon^+)} \mm_\alpha((S_2)_\epsilon^{+})
\,
\q(\de\alpha)
\geq
\int_{\QQ((S_1)_\epsilon^+)} \mm_\alpha((S_2)_\epsilon^{+})
\,
\q(\de\alpha)
\\
&
\geq
\int_{\QQ((S_1)_\epsilon^+)} \mm_\alpha((S_1)_\epsilon^{+})
\,
\q(\de\alpha)
=
\mm((S_1)_\epsilon^{+})
,
\end{align*}
proving the theorem.
\end{proof}

\begin{remark}
Inspecting the proof of \Cref{eq:HawSynt}, one can check that the assumption~\eqref{eq:defHFCG} can be replaced by the following weaker (but slightly more tecnical) condition: for $\qq$-a.e. $\alpha$, the ray $H_\alpha$  (see \Cref{T:localization}) is isometric to a real interval  unbounded from above. 
\end{remark}

A priori,  the Minkowski content of $S$ could depend on the choice
of $\mm$ and $G$.
In the next proposition we show that, under suitable covariance
relations linking $G$ with $\mm$, the Minkowski content is independent from such choices.

\begin{proposition}\label{P:covariance-area}
  Let $(X,\ll,\leq,\mathfrak{T})$ be a topological causal space and let $H\subset X$ be
  a null non-branching closed achronal set.
  Let $\mm_{i}\in\M^{+}(H)$ be a measure and $G_{i}$ be a gauge on $H$, $i=1,2$.
  Let $S\subset H$ be an acausal set.
  Assume that $G_1=f+hG_2$, for some transverse
  functions $h,f$ (cf.\ \Cref{P:characterization-equivalence})
 and that $\mm_1=\frac{1}{h}\mm_2$.
  Assume that $(H,G_1,\mm_1)$ satisfies the $\NC^e(N)$
  condition (hence $(H,G_2,\mm_2)$ does, as well, see \Cref{R:invariance-NC-non-branching}).
  Assume that $H$ is future complete for $G_1$ (hence for $G_2$).
  
  If
  \[
  (\mm_1)^{+}_{G_1}(S)<\infty
  \qquad
  \text{ and }
  \qquad
  (\mm_2)^{+}_{G_2}(S)<\infty
  ,
  \]
  then
  \begin{equation}
    (\mm_1)^+_{G_1}(S)
    =
    (\mm_2)^+_{G_2}(S).
  \end{equation}
\end{proposition}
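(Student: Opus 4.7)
The plan is to reduce the identity $(\mm_1)^{+}_{G_1}(S)=(\mm_2)^{+}_{G_2}(S)$ to a fiber-wise statement along the rays of $H$, by disintegrating $\mm_1,\mm_2$ along the common ray decomposition of $\noending{H}$ and by using the affine relation between $G_1$ and $G_2$ to transfer one $\epsilon$-enlargement into the other. By \Cref{P:characterization-equivalence} the hypothesis $G_1=f+hG_2$ forces $G_1\sim G_2$, so both gauges determine the same family of $G$-causal curves and hence the same transport relation $\relation$; a single measurable quotient $\QQ:\noending{H}\to Q$ from \Cref{P:cross-section} therefore serves for both structures, and \Cref{T:disintegration} applied with respect to a common quotient measure $\q$ yields
\[
\mm_i\llcorner_{\noending{H}}=\int_Q \mm_{i,\alpha}\,\q(\de\alpha),\qquad i=1,2,
\]
with conditional measures related by $\mm_{1,\alpha}=\tfrac{1}{h(\alpha)}\mm_{2,\alpha}$ for $\q$-a.e.\ $\alpha$, thanks to the transversality of $h$.

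Next, I would parametrize each ray $H_\alpha$ by $\gflow_{G_2}(\alpha,\,\cdot\,)$, identifying it with an interval $I_\alpha\subset\R$. Under this identification, \Cref{T:localization} gives the absolute continuity of $\mm_{2,\alpha}$ with density $b_\alpha$ for which $b_\alpha^{1/(N-2)}$ is concave. The restriction to $H_\alpha$ of the identity $G_1=f(\alpha)+h(\alpha)G_2$ yields the flow identity $\gflow_{G_1}(x,t)=\gflow_{G_2}(x,t/h(\alpha))$, from which
\[
S^{+}_{G_1,\epsilon}\cap H_\alpha = S^{+}_{G_2,\epsilon/h(\alpha)}\cap H_\alpha,
\]
where $S^{+}_{G_i,\epsilon}$ denotes the future $\epsilon$-enlargement computed with respect to $G_i$. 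Combining this with the disintegration formula, Fubini, and the change of variable $\tilde\epsilon:=\epsilon/h(\alpha)$ produces explicit representations for $\mm_1(S^{+}_{G_1,\epsilon})/\epsilon$ and $\mm_2(S^{+}_{G_2,\epsilon})/\epsilon$ as fiber-wise integrals over $Q$, in which the $h(\alpha)$-factors coming from the measure relation combine with those produced by the change of variable in the denominator to cancel, leaving (at the fiber level) the same limiting integrand $b_\alpha$ evaluated at the $G_2$-value of the point of $S$ on $H_\alpha$.

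The main technical obstacle is the passage to the limit $\epsilon\to 0$: the Minkowski contents are defined as $\limsup$'s and not as genuine limits, and the effective enlargement parameter $\tilde\epsilon=\epsilon/h(\alpha)$ depends on $\alpha$, so moving the limit inside the integral over $Q$ demands care. My plan is to exploit the concavity of $b_\alpha^{1/(N-2)}$ together with the future completeness of $H$: as in the proof of \Cref{eq:HawSynt}, these imply that $b_\alpha$ is non-decreasing along each ray, whence $\delta\mapsto \mm_{2,\alpha}(S^{+}_{G_2,\delta})/\delta$ is non-decreasing in $\delta>0$. The finiteness assumptions $(\mm_i)^{+}_{G_i}(S)<\infty$ then produce a $\q$-integrable dominating function, so that dominated convergence legitimizes the interchange, upgrades the $\limsup$'s to genuine limits, and identifies the two limits, concluding the proof.
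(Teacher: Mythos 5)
Your approach is conceptually the same as the paper's: disintegrate both measures along the common ray decomposition (legitimate because $G_1\sim G_2$ by \Cref{P:characterization-equivalence}, so the transport relation $\relation$ and hence the quotient $\QQ$ are shared), parametrize each ray, represent each Minkowski content as a fiber-wise integral over $Q$, and pass to the limit via the monotonicity of $\delta\mapsto\mm_{2,\alpha}(S^+_{G_2,\delta})/\delta$ supplied by $\NC^e(N)$, future completeness, and the finiteness hypotheses. The paper organizes the bookkeeping through the density transformation formula~\eqref{eq:transformation-h} and the affine relation between the fiber-wise offsets $a_{i,\alpha}$, rather than transforming the $\epsilon$-enlargements directly as you do, but these are two presentations of one and the same change of variable.

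However, the claimed cancellation at the heart of your argument does not check out. With $\mm_{1,\alpha}=\frac{1}{h(\alpha)}\mm_{2,\alpha}$ and $S^{+}_{G_1,\epsilon}\cap H_\alpha = S^{+}_{G_2,\epsilon/h(\alpha)}\cap H_\alpha$, one finds
\begin{equation*}
\frac{\mm_{1,\alpha}\bigl(S^{+}_{G_1,\epsilon}\bigr)}{\epsilon}
=\frac{1}{h(\alpha)}\cdot\frac{\mm_{2,\alpha}\bigl(S^{+}_{G_2,\epsilon/h(\alpha)}\bigr)}{\epsilon}
=\frac{1}{h(\alpha)^2}\cdot\frac{\mm_{2,\alpha}\bigl(S^{+}_{G_2,\epsilon/h(\alpha)}\bigr)}{\epsilon/h(\alpha)} ,
\end{equation*}
so the measure factor and the change-of-variable factor are both $h(\alpha)^{-1}$; they multiply to $h(\alpha)^{-2}$ rather than cancel, and in the limit you obtain $(\mm_1)^+_{G_1}(S)=\int_Q h(\alpha)^{-2}\,b_\alpha(a_{2,\alpha})\,\q(\de\alpha)\neq(\mm_2)^+_{G_2}(S)$ in general. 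The same computation shows the paper's formula~\eqref{eq:transformation-h} cannot hold as written: pushing $\mm_{1,\alpha}=\frac{1}{h(\alpha)}\mm_{2,\alpha}$ through $G_1=f(\alpha)+h(\alpha)G_2$ yields $h_{1,\alpha}(t)=h(\alpha)^{-2}h_{2,\alpha}\bigl((t-f(\alpha))/h(\alpha)\bigr)$, not $h_{2,\alpha}(f(\alpha)+h(\alpha)t)$. Tracing the discrepancy back, the covariance that actually makes the Minkowski content invariant under $G_1=f+hG_2$ (and that comes out of the smooth computation $G_{L_1,S}=\phi^{-1}G_{L_2,S}$, $\vol_{L_1}=\phi^{-1}\vol_{L_2}$ with $h=\phi^{-1}$) is $\mm_1=h\,\mm_2$, not $\mm_1=\frac{1}{h}\mm_2$. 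Under that corrected hypothesis the two $h(\alpha)$-factors in your computation really do cancel and your argument closes. So the gap is not in your strategy, but in taking the stated sign at face value and declaring a cancellation without verifying the arithmetic — which, had you carried it out, would have exposed the sign issue in the hypothesis itself.
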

\begin{proof}
  Let $\mm_{i,\alpha}$ be the conditional measures given by the
  disintegration of $\mm_{i}$ and let $h_{i,\alpha}$, be the density of
  $\mm_{i}$, $i=1,2$, see~\Cref{T:disintegration}.
  Since by assumption $\mm_2=h\mm_1$, and $G_1=f + h G_2$,
  it is immediate to see that
  \begin{equation}
    \label{eq:transformation-h}
    h_{1,\alpha}(t)=h_{2,\alpha}(f(\alpha)+h(\alpha)\,t)
  \end{equation}
  Denote by $S^{+}_{i,\epsilon}$ the $\epsilon$ enlargement w.r.t.\ the
  gauge $G_{i}$, $i=1,2$.
  Let $A:=\QQ(S_{1,\epsilon}^{+})=\QQ(S_{2,\epsilon}^{+})$.
  For every $\alpha\in A$, let $a_{i,\alpha}\in\R$ be such that
  $(a_{i,\alpha},a_{i,\alpha}+\epsilon)$ is the maximal interval
  satisfying
  $\gflow_{G_{i}}((a_{i,\alpha},a_{i,\alpha}+\epsilon))\in S^{+}_{i,\epsilon}$,
  $i=1,2$.
  It holds that
  \begin{equation}
    \label{eq:a-affine}
    a_{1,\alpha}=f(\alpha)+h(\alpha) a_{2,\alpha}
    .
  \end{equation}
  We compute
  \begin{align*}
    \mm^{+}_{G_{i}}(S)
    &
    =
      \limsup_{\epsilon\to 0^{+}}
      \frac{\mm(S_{i,\epsilon}^{+})}{\epsilon}
      =
      \limsup_{\epsilon\to 0^{+}}
      \int_{A} \frac{\mm_\alpha(S_{i,\epsilon}^{+})}{\epsilon}
\,
      \q(\de\alpha)
    \\
    &
      =
            \limsup_{\epsilon\to 0^{+}}
\int_{A}
\int_{a_{i,\alpha}}^{a_{i,\alpha}+\epsilon}
\frac{h_{i,\alpha}(t)}{\epsilon}
\,
\de t
\,
      \q(\de\alpha)
      .
  \end{align*}
  Denote by $v_{i,\alpha}$ the function
  \begin{equation}
    v_{i,\alpha}(t)
    :=
    \int_{a_{i,\alpha}}
    h_{i,\alpha}(s)
    \,\de s
    .
  \end{equation}
  The $\NC^e(N)$ condition implies that $h_{i,\alpha}^{\frac{1}{N-2}}$
  is concave, thus the future completeness w.r.t.\ $G_{i}$ guarantees
  that $h_{i,\alpha}$ is non-decreasing, therefore $v_{i,\alpha}$ is
  convex.
  It follows that the limit $\lim_{\epsilon\to
    0^{+}}\frac{v_{i,\alpha}(\epsilon)}{\epsilon}$ exists and it is
  monotone decreasing.
  The hypothesis $\mm^{+}_{G_{i}}(S)<\infty$, guarantees that for
  $\epsilon>0$ small enough,
  $\frac{v_{i,\alpha}(\epsilon)}{\epsilon}\in L^1(\q)$.
  We are in position to apply the dominated convergence theorem,
  deducing
  \begin{equation}
    \mm^{+}_{G_{i}}(S)
    =
    \int_{A}
    h_{i,\alpha}(a_{i,\alpha})
    \,
    \q(\de\alpha)
    .
  \end{equation}
  We conclude by combining~\eqref{eq:transformation-h} and~\eqref{eq:a-affine}.
\end{proof}

\section{Penrose's singularity theorem for synthetic and continuous spacetimes}\label{S:Penrose}

In this section, we propose a way to extend Penrose's singularity
theorem to the setting of a continuous spacetime (i.e., when the
Lorentzian metric is continuous).
Recall that Penrose's singularity theorem~\cite{Penrose65} states that if a spacetime satisfyng the \NEC\ admits a non-compact Cauchy hypersurface and it contains a trapped surface, then it is null geodesic incomplete; i.e., there exists a null geodesic $\gamma$ (in the sense that it solves the geodesics-{ODE}: $\nabla_{\dot{\gamma}}\dot{\gamma}$) whose maximal domain of definition is strictly contained in $\R$.
Besides the proof, a first challenge in generalizing such a result for a continuous spacetime is to make sense of the very statement. We first start with the concept of null completeness. 

\subsection{Weak null completeness}\label{SS:WNC}

In the smooth setting, it is known that  the incomplete geodesic yielded by Penrose's singularity theorem is achronal (see~\cite{Penrose65, Penrose-DiffTopGR},~\cite[Sec.~12.4]{BEE-Book},~\cite[Th.~2.67]{Min}).
Negating the existence of such a geodesic therefore amounts to requiring that every maximally extended geodesic is either complete or fails to remain achronal (or, equivalently, fails to be maximizing).
We adopt this condition as the definition of \emph{weak null completeness}, as it admits an equivalent synthetic characterization, which we establish below.

\begin{definition}[Weak null completeness]\label{defn:wgnc}
  Let $(M,g)$ be a strongly causal Lorentzian manifold.
We say that $(M,g)$ is \emph{weakly future  null complete} if for any
  maximally-defined (on the right) null geodesic (in the differential
  sense) $\gamma:[0,b)\to M$, either $b=\infty$, or
  $\gamma_0\ll\gamma_{t}$, for some $t\in[0,b)$. The notion of \emph{weakly past null  complete} is analogous, by considering maximally defined (on the left) null geodesics. We say that $(M,g)$ is \emph{weakly  null complete} if it is both weakly future and past  null complete.
\end{definition}

It is clear that  a null complete manifold is weakly null complete as the case of $b<\infty$ never occur (and therefore being weakly null incomplete will be a stronger property of being null incomplete). 
On the other hand, we now exhibit an example of a weakly null complete Lorentzian manifold that is not null complete justifying the terminology. 

In particular, we now provide a sufficient condition for a
warped product to be weakly null complete but not
null complete.  This amounts to show that all the incomplete inextendible null geodesics are not globally maximizing. The geometric idea is, roughly, of an upside down cone-like shape where all the causal geodesic end in the tip and all the null geodesics move in spiral, more and more densely as they approach the tip.

  \begin{proposition}\label{Prop:ExNotComplete}
    Let $(F,g_F)$ be a compact Riemaniann manifold and consider the
    warped product $M:=(-\infty,0)\times_f F$, endowed with the metric
    \begin{equation}
      g= \de t^2 - f^2(t)\, g_{F}.
    \end{equation}
    Assume that $f(t)\leq O(\sqrt{-t})$ as $t\to 0^-$.

    Then $M$ is a weakly null complete, but 
    both null and time-like incomplete. More precisely, every future directed geodesic (either timelike or null) is future incomplete, and every future directed null geodesic is not globally maximizing. 
  \end{proposition}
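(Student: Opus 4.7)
The plan is to use the warped-product structure to integrate the geodesic equations explicitly, then to read off future incompleteness from the finite affine lifespan of the $t$-component, and finally to deduce the failure of global maximality of null geodesics from the behavior of their projection to the compact fibre $F$.

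First I would invoke the classical decomposition of geodesics in a warped product: for $\gamma(s)=(t(s),x(s))$, the fibre component $x(s)$ is a pre-geodesic in $(F,g_{F})$ and the quantity $f(t(s))^{4}\,g_{F}(\dot x(s),\dot x(s))\equiv C\ge 0$ is a first integral. Combined with the energy constancy $g(\dot\gamma,\dot\gamma)=\epsilon$ (with $\epsilon=0$ in the null case and $\epsilon=1$ for unit-speed timelike geodesics), this reduces the longitudinal dynamics to the scalar ODE $\dot t(s)^{2}=\epsilon+C/f(t(s))^{2}$, from which separation of variables yields
\[
s_{\max}=\int_{t_{0}}^{0}\frac{f(t)\,\de t}{\sqrt{\epsilon f(t)^{2}+C}}.
\]
Bounding this by $C^{-1/2}\int_{t_{0}}^{0}f(t)\,\de t$ when $C>0$, and by $-t_{0}$ in the degenerate case $\epsilon=1$, $C=0$, and then invoking the hypothesis $f(t)\le c\sqrt{-t}$ near $0$, one gets $s_{\max}<\infty$ in every case. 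This yields future incompleteness of every future-directed timelike or null geodesic.

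Next, for the non-global-maximality of null geodesics, I would compute the $(F,g_{F})$-arc length of the fibre projection: via $\de s=f(t)/\sqrt{C}\,\de t$ one obtains $L_{F}(s)=\int_{t_{0}}^{t(s)}\de t'/f(t')$. Testing with competitor timelike curves of the form $(t,y(t))$ subject to the pointwise constraint $|\dot y(t)|_{F}<1/f(t)$, I would then verify directly the causal characterization
\[
(t_{0},x_{0})\ll(t_{1},x_{1})\iff d_{F}(x_{0},x_{1})<\int_{t_{0}}^{t_{1}}\frac{\de t}{f(t)},
\]
noting that the two sides coincide along a null geodesic precisely when the projection is a length-minimizing geodesic in $F$. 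Since $F$ is compact and $x(\cdot)$ is a pre-geodesic, $x|_{[0,s]}$ ceases to be length-minimizing once $L_{F}(s)$ surpasses the cut-locus distance from $x_{0}$ in the direction $\dot x(0)$; at any such $s$ one has $\gamma_{0}\ll\gamma_{s}$, and by the push-up property this chronological relation persists at every later time. Combined with the past-directed version (obtained by time-reversal), this gives weak null completeness.

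The main obstacle will be the step forcing $L_{F}(s)$ to exceed the cut-locus threshold before the affine parameter runs out: whereas finiteness of $\int_{t_{0}}^{0}f\,\de t$ follows transparently from the upper bound $f\le c\sqrt{-t}$, securing the required growth of $\int_{t_{0}}^{0}\de t/f(t)$ requires exploiting the precise rate at which $f$ degenerates at $0$, not merely its upper bound. This is where the sharp form of the hypothesis on $f$ is decisive and where the analysis is most delicate.
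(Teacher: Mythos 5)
Your route is the same as the paper's: reduce the geodesic ODE system via the Clairaut-type first integral $f^4\,g_F(\dot x,\dot x)\equiv\text{const}$, compute the affine lifespan by separation of variables, and deduce non-maximality of non-radial null geodesics from the fibre projection running past the $g_F$-cut locus. Your quadrature formula $s_{\max}=\int_{t_0}^0 f(t)\,\de t/\sqrt{\epsilon f^2(t)+C}$ is a clean repackaging of the paper's direct integration of equations~\eqref{eq:acceleration-base}--\eqref{eq:acceleration-fiber}, and your causal characterization $(t_0,x_0)\ll(t_1,x_1)\iff d_F(x_0,x_1)<\int_{t_0}^{t_1}\de t/f$ is precisely the device the paper uses implicitly when it analyzes $\partial I^{+}(p)$ as a graph over $F$.

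The obstacle you flag at the end is the genuine crux, and your wariness is well-founded: under the stated hypothesis $f(t)\leq O(\sqrt{-t})$ it cannot be closed. The paper secures divergence of $r(s)=\int_0^s C/f^2(t_w)\,\de w$ via the intermediate estimate $f(t_s)\leq O(\sqrt{b-s})$ in eq.~\eqref{eq:expansion-f}, but the supporting inequality $t_s=\int_b^s\dot t_w\,\de w\geq s-b$ is reversed (since $\dot t_w\to\infty$ one gets $-t_s=\int_s^b\dot t_w\,\de w\geq b-s$, i.e.\ $t_s\leq s-b$), and even the corrected inequality yields only $f(t_s)\leq c\sqrt{-t_s}$ with $\sqrt{-t_s}\geq\sqrt{b-s}$, which is the wrong direction. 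Concretely, for $f(t)=\sqrt{-t}$, which saturates the hypothesis, one computes $\int_{t_0}^0\de t/f(t)=2\sqrt{-t_0}$, a finite quantity tending to $0$ as $t_0\to 0^-$; thus for $-t_0$ small the null generators of $\partial I^{+}(t_0,x_0)$ traverse total $g_F$-arc-length below the injectivity radius at $x_0$, never cross the cut locus, remain globally maximizing all the way to $t=0$, and weak future null completeness fails. Both your argument and the paper's close once the hypothesis is strengthened to $f(t)\leq O(-t)$ as $t\to 0^-$ (equivalently, $\int_0^{\epsilon}\de u/f(-u)=+\infty$), for then $\int_{t_0}^0\de t/f(t)$ diverges and the fibre pre-geodesic of every non-radial null geodesic wraps around all of $F$.
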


  \begin{proof}
    In this proof, we denote by $\gamma:[0,b)\to M$ an arbitrary maximally-defined
    future-directed geodesic in $M$, with $\gamma_{s}=(t_{s},x_{s})$.
  We will write $(\dot{t}, \dot{x})= \dot\gamma := \frac{\de}{\de s}\gamma$ and $\nabla_{\dot x_{s}} \dot{x}_{s}$ for the covariant derivative in $(F,g_{F})$ of $\dot{x}$ with respect to itself, i.e., the acceleration of the curve $s\mapsto x_{s}\in F$. Also, we  denote the derivative of the warping function $f:(-\infty, 0)\to \R$ by  $f':= \frac{\de}{\de t} f$.
    The equations governing the evolution of $\gamma$
    are~\cite[Prop.~7.38,~p.~208]{O'Neill}
    \begin{align}
      &
        \label{eq:acceleration-base}
        \ddot t_{s}
        =
        -
        g_{F}(\dot x_{s},\dot x_{s})
        \,f(t_{s})
        \,
        f'(t_{s})
        ,
      \\
      &
        \label{eq:acceleration-fiber}
        \nabla_{\dot x_{s}} \dot{x}_{s}
        =
        -2
        \,
        \frac{f'(t_{s})}{f(t_{s})}
        \,
        \dot t_{s}
        % \frac{\de \log(f\circ t)}{\de s}
        \,\dot x_{s}
        =
        -2
        \,
        \frac{\de \log (f \circ t_{s})}{\de s}
        % \frac{\de \log(f\circ t)}{\de s}
        \,\dot x_{s}
        .
    \end{align}

    Notice that $|\dot\gamma|^2=
    |\dot{t}_{s}|^2-f^2(t_{s})g_{F}(\dot x_{s},\dot x_{s})$ is constant in $s$, since $\gamma$ is a geodesic.   We can rewrite~\eqref{eq:acceleration-base} as
    \begin{equation}
              \ddot t_{s}
              =
              -
        (|\dot t_{s}|^2-|\dot{\gamma}|^2)
        \,
        \frac{f'(t_{s})
        }{
          f(t_{s})
        }
        .
      \end{equation}
      If $\dot x_{s}\equiv 0$ then, from~\eqref{eq:acceleration-base}, we get that  $t$ is affine and thus $b$ must be
      finite.
      \\ If instead $\dot x_{s}\not \equiv 0$, then
      \begin{equation*}
        \frac{
          \dot t_{s}
          \,
          \ddot t_{s}
        }{
          |\dot t_{s}|^2-|\dot{\gamma}|^2
        }
        =
        \frac{1}{2}\,
        \frac{
          \de \log(|\dot t_{s}|^2-|\dot{\gamma}|^2)
        }{
          \de s
        }
        =
        -
        \frac{
          f'(t_{s})
          \,
          \dot t_{s}
        }{
          f(t_{s})
        }
        =
        -
        \frac{
          \de \log(f( t_{s}))
        }{
          \de s
        }
        .
        \end{equation*}
    By integrating, we deduce that
    \begin{equation*}
      \frac{|\dot t_{s_2}|^2-|\dot{\gamma}|^2}{|\dot t_{s_1}|^2-|\dot{\gamma}|^2}
      =
      \frac{f^2(t_{s_1})}{f^2(t_{s_2})}
      ,
      \qquad
      \forall s_1, s_2\in [0,b), s_1\leq s_2
      ,
    \end{equation*}
    therefore
    \begin{equation}
      \dot t_{s_2}
      =
      \sqrt{
        |\dot{\gamma}|^2
        +
        (|\dot t_{s_1}|^2 - |\dot{\gamma}|^2)
      \frac{f^2(t_{s_1})}{f^2(t_{s_2})}
    }
    .
  \end{equation}
  We can now take the limit on the r.h.s.\ as $s_2\to b^{-}$, deducing
  that $\lim_{s\to b^{-}} \dot t_{s}=\infty$, therefore $b$ must be
  finite.
  This proves future incompleteness.

  A consequence of the limits $\lim_{s\to b^{-}} t_{s}=0$
  and
  $\lim_{s\to b^{-}} \dot t_{s}=\infty$
  is that
  \begin{align*}
    t_{s}
    =
    \int_{b}^s\dot t_{w} \,\de w
    \geq
    s-b
    ,
    \qquad
    \text{ for $s$ large enough,}
  \end{align*}
  therefore
    \begin{equation}
      \label{eq:expansion-f}
      f(t_{s})
      \leq
      O(\sqrt{b-s})
      .
    \end{equation}

    We now show that any null geodesic starting at a given
    point $p=(t_0,x_0)$ will definitely enter in $I^+(p)$.
    We claim that for any $z\in M$,
    \begin{equation}
      \label{eq:warped-future}
      J^{+}(p)\cup (-\infty,0) \times \{z\}
      \neq
      \emptyset
      .
    \end{equation}
    Suppose on the contrary that there exists $z$ contradicting~\eqref{eq:warped-future}.
    Let $L:=(\exp^{g_{F}}_{x_0})^{-1}(z)$ and
    $\dot t_0:=f(t)\sqrt{g_{F}(L,L)}$, and consider the null geodesic
    $\gamma$ starting from $p$, with initial velocity
    $\dot\gamma_0=(\dot t_0,L)$.
    We will obtain a contradiction by showing that the projection of $\gamma$ on $F$ passes through $z$.
    Equation~\eqref{eq:acceleration-fiber} states that the geodesic
    acceleration of $x$ is parallel to its velocity, thus $x$ is a
    reparametrization of a geodesic $y$ in $F$, i.e., $x_{s}=y_{r(s)}$.
    Since $F$ is compact, then Hopf{--}Rinow theorem guarantees it is complete. Then $y_{r}$ is defined for all positive $r$
    and $y$ accumulates at some point in $F$.
    A simple computation gives
    \begin{align*}
       \nabla_{\dot{x}_{s}} \dot{x}_{s}
      =
      \nabla_{\dot r(s)\dot y_{r(s)}}
      (\dot r(s)\dot y_{r(s)})
      =
      \ddot r(s) \dot y_{r(s)}
      =
      \frac{\ddot r(s)}{\dot r(s)} \dot x_{s}
      ,
    \end{align*}
   which, combined with~\eqref{eq:acceleration-fiber}, yields
    \begin{equation*}
      \frac{\de \log \dot r}{\de s}
      =
      \frac{\ddot r}{
        \dot r}
      =
      -2
      \frac{\de \log (f \circ t)}{\de s},
    \end{equation*}
    or, equivalently:
    \begin{equation*}
      \dot r(s)=\frac{C}{f^2(t_{s})},
    \end{equation*}
    for some constant $C$. Integrating, we get  that
    \begin{equation*}
      r(s)=\int_0^s\frac{C}{f^2(t_{w})}
      \,\de w
      .
    \end{equation*}
    Plugging the expansion~\eqref{eq:expansion-f} in the equation
    above, we deduce that $r(s)\to+\infty$, as $s\to b^-$.
    This means that $x$ runs on the whole trajectory of $y$ and therefore the curve
    $x$ passes through the point $z$.

    Having proved~\eqref{eq:warped-future}, we deduce that
    $\partial I^{+}(p)$ intersect each line $(-\infty, 0)\times \{z\}$ in
    only one point,  for each fixed $z\in F$.
    Moreover, it is well-known
    (see~\cite[Cor.~14.27,~p.~415]{O'Neill}) that $\partial I^{+}(p)$
    is a $C^{0}$ manifold, thus $\partial I^{+}(p)$ can be seen as a
    graph hypersurface over $F$ (in particular, it is a Cauchy hypersurface).
    Since  $F$ is compact and $\partial I^{+}(p)$ is a graph
    surface, then there exists $t_1\in (t_0, 0)$ such that
    \[
      \partial I^{+}(p) \subset (t_0,t_1) \times F.  
    \]
    It follows that any maximal null geodesic
    starting from $p$ will definitely enter $I^{+}(p)$, thus failing to be maximizing.
  \end{proof}

The weak null completeness condition can be rephrased using gauges. 
We recall that if $M$ is a smooth Lorentzian manifold
and $H=\partial I^{+}(A)$, with $A$ a compact acausal submanifold, 
a \emph{natural} gauge function on $H$ is 
 given by pushing via the exponential a fiber-wise affine function
  in the normal bundle $\normal\initial$
  (see~\Cref{R:compatibility-penrose}).
  In particular $G$ can be extended to $0$ along
$\initial$, continuously on each generator.
 Recall also that  $G$ is called \emph{proper}
if the pre-image of every compact set is precompact. 

We will now show that if $M$ is a strongly causal Lorentzian manifold,
the weak null completeness of $M$ is equivalent to the existence of natural proper gauges, with the latter having the great advantage of being formulated without any need of an underlying manifold structure. 

Thus, \emph{the properness of the gauge shall be seen as a synthetic
    counterpart of the weak null completeness}.
The statement of the next proposition is given for weak future null completeness; 
one can state a similar theorem for past completeness.

\begin{proposition}\label{prop:wgnc}
    Let $(M,g)$ be a strongly causal, causally closed
    Lorentzian manifold.
    %
%    Let $S$ be a compact achronal space-like $(n-2)$-submanifold and
%    let $H=\partial I^{+}(S)$.
    Let $H=\partial I^{+}(A)$, 
    for some $C^2$ compact space-like achronal submanifold $A$.
    %
    %Assume $H$ to be smooth.
    \begin{enumerate}
    \item\label{WGNC-to-GProper}
      Let $G$ be any natural gauge for $H$ which is continuously extendible to $0$ at $A$
      (see \Cref{R:compatibility-penrose}).
       If $(M,g)$ is weakly future  null complete, then $G$ is proper.

    \item\label{GProper-to-WGNCH}
      Conversely, if $H$ admits a proper natural gauge $G$,
      then for every
      maximally (on the right) defined null geodesic
      $\gamma:[0,b)\to M$,
      the following dichotomy holds:
      either $b=\infty$ or $\gamma$ will leave $H$ definitely.
    \item\label{GProperCones-to-WGNCH}   Finally, if every future light-cone of $M$ admits a  proper natural gauge,
    then $(M,g)$ is weakly future null future complete.
    \end{enumerate}
\end{proposition}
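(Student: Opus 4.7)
The plan is to address the three claims in order, leveraging the description from \Cref{R:compatibility-penrose} of $H$ via the normal exponential map from $\initial\subset A$ and the identity $G(\exp(v)) = -g(v,W)$. Recall in particular that each curve $\sigma\mapsto\exp(\sigma v)$ with $v\in\hat H$ traces a null generator of $H$ from a point of $\initial$ until it exits $H$; by the push-up property applied to $H = J^+(\initial)\setminus I^+(\initial)$, once it exits it enters $I^+(\initial)$ and remains there. For part (\ref{WGNC-to-GProper}), I argue by contradiction. Supposing $G^{-1}(K)$ is non-precompact for some compact $K\subset\R$, pick $(x_n)\subset G^{-1}(K)$ with no convergent subsequence and write $x_n=\exp(v_n)$ for $v_n\in\hat H$ based at $p_n := M^-(x_n)\in A$. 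Compactness of $A$ and boundedness of $-g(v_n,W)\in K$ yield, up to subsequence, $v_n\to v$ in $\overline{\hat H}$ over $A$. If $v=0$ then $x_n\to \lim p_n$, contradicting the choice of $x_n$, so $v\neq 0$. Let $\gamma_v:[0,b_v)\to M$ be the maximal null geodesic with $\dot\gamma_v(0)=v$; the key step is $b_v>1$. Assuming $b_v\le 1$, weak future null completeness furnishes $t_0\in[0,b_v)\subseteq[0,1)$ with $\gamma_v(0)\ll\gamma_v(t_0)$; by openness of $\ll$ and continuous dependence of ODE solutions, $p_n=\gamma_{v_n}(0)\ll\gamma_{v_n}(t_0)=\exp(t_0v_n)$ for all large $n$, contradicting that the curve $\sigma\mapsto\exp(\sigma v_n)$, $\sigma\in[0,1]$, is an achronal null generator of $H$ from $p_n$ to $x_n$. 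Thus $b_v>1$, so $\exp(v)$ is defined, and $x_n\to\exp(v)$ by continuity, the sought contradiction.

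For part (\ref{GProper-to-WGNCH}), let $\gamma:[0,b)\to M$ be a maximal null geodesic. If $\gamma$ ever exits $H$, by the push-up it enters $I^+(\initial)$ and stays there, so it leaves $H$ definitely; otherwise $\gamma([s_0,b))\subset H$ for some $s_0$. Assuming $b<\infty$ for contradiction, $G\circ\gamma$ is affine, hence bounded on $[s_0,b)$, so $\gamma([0,b))\subset\gamma([0,s_0])\cup G^{-1}(K)$, which is compact by properness of $G$. Parallel transport of $\dot\gamma(0)$ along this compact curve in the $C^2$ manifold $(M,g)$ keeps $(\gamma(t),\dot\gamma(t))$ in a compact subset of $TM$, so the standard continuation theorem for the $C^1$ geodesic spray extends $\gamma$ past $b$, contradicting maximality. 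Part (\ref{GProperCones-to-WGNCH}) then follows by applying part (\ref{GProper-to-WGNCH}) with $A:=\{p\}$, $p:=\gamma(0)$: the singleton is a compact, achronal, space-like $C^2$ submanifold of codimension $n=\dim M\ge 2$, hence $H_p:=\partial I^+(p)$ carries a proper natural gauge by hypothesis; the dichotomy in part (\ref{GProper-to-WGNCH}) produces either $b=\infty$ or $\gamma(t_0)\in I^+(p)$ for some $t_0<b$, i.e., $p\ll\gamma(t_0)$, matching the second alternative of \Cref{defn:wgnc}.

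The main obstacle is the step in part (\ref{WGNC-to-GProper}) establishing $b_v>1$. Weak null completeness does not a priori allow extending a null geodesic past its maximal domain—it merely rules out inextendible null geodesics of finite length being achronal. The delicate insight is that the approximating generators $\sigma\mapsto\exp(\sigma v_n)$ are achronal on $[0,1]$, a property preserved in the limit via the openness of $\ll$ and continuous dependence of the geodesic flow on initial data; this forces the limit geodesic $\gamma_v$ to remain achronal on $[0,\min(b_v,1))$, excluding $b_v\le 1$ via weak null completeness.
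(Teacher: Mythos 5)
Your proof is correct and takes essentially the same approach as the paper. In part~(1) you reproduce the paper's argument that the lifted set of null normal vectors with bounded gauge value is compact, using weak null completeness together with openness of $\ll$ (push-up and achronality of the approximating generators) to force the limit vector to lie in $\dom(\exp)$; parts~(2) and~(3) likewise mirror the paper's gauge reparametrization, the precompactness of the geodesic image under properness, and the specialization to the light-cone $\partial I^{+}(\gamma_0)$.
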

  \begin{proof}
    \textbf{Proof of~\ref{WGNC-to-GProper}}.
    We recall from \Cref{R:compatibility-penrose} that 
    \begin{equation}
%      \hat H
%      :=
      \noending{H}
      \subset
      \exp(
      \{
      v\in\normal A
      \colon
      g(v,v)=0
      \text{ and }
      g(v,W)<0
      \}
      )
      ,
    \end{equation}
    where $W$ is a time-like vector field given by the
    time-orientation of $M$ (notice that here, differently form
    \Cref{R:compatibility-penrose}, we are considering the larger set
    $A\supset\initial$).
    If we define
    \begin{equation*}
      C:=
      \{v\in \normal A
      \cap \dom(\exp)
      \colon
      g(v,v)=0
      ,
      \;
      g(v,W)\leq
      0
      ,
      \text{ and }
      \exp(v)\not\in I^{+}(A)
      \}
      ,
    \end{equation*}
    it is then clear that $H=\exp(C)$.
    By definition of natural gauge, it holds that
    \begin{equation*}
        G^{-1}([a,b]) 
      \subset
      \exp(
      D)
      ,
      \quad
      \text{where}
      \quad
      D:=
      \{
      v\in C
      \colon
      -g(v,W)\in [a,b]
      \}
      \subset \normal A
      .
    \end{equation*}
    We check that $D$ is compact; by continuity of the exponential the
    thesis will follow.
    Boundedness of $D$ is immediate.
    Regarding closedness% (in $\normal A$)
, let $v_{n}$ be a sequence in
    $D$, converging to $v_\infty\in\normal A$.
    Assume by contradiction that $v_\infty\not\in D$.
    Then $v_\infty\not\in \dom(\exp)$, since the other conditions
    defining $D$ are closed.
    It follows that the curve $\gamma_{t}:=\exp(tv_\infty)$ is defined
    on $[0,b)$ with $b \leq 1$.
    We exploit the weak null completeness to infer that,
    for some $t<b$,
    $\gamma_0\ll\gamma_{t}\in I^{+}(\gamma_0)\subset I^{+}(A)$.
    Since $\exp(tv_{n}) \overset{n\to \infty}\longrightarrow \gamma_{t}$,
    then, for $n$ large enough,
    $\exp(tv_{n})\in I^{+}(A)$.
    Therefore, $\exp(v_{n})\in I^{+}(A)$, yielding a
    contradiction with the definition of $D$.

\smallskip

 \textbf{Proof of~\ref{GProper-to-WGNCH}}.
 % Let    $\gamma_{t}=\exp_{\gamma_0}(tL)=\gflow_{L}(\gamma_0,t)$.
 Fix $\gamma:[0,b)\to M$ an inextendible geodesic.
 Up to a reparametrization, we can assume $G(\gamma_{t})=t+G(\gamma_0)$.
    Suppose by contradiction that $b<\infty$ and that $\gamma$ is
    contained in $H$.
    Let $F:=\overline{G^{-1}([0,b+G(\gamma_0)])}\subset H$.
    By the properness assumption on $G$, we have that $F$
    is compact.
    Let $t_{n}\uparrow b$.
    Up to a subsequence, $\gamma_{t_{n}}\in F$ is
    converging in $F$.
    By standard properties of the geodesic flow,
    it follows that  $\exp_{\gamma_0}(bL)$ is well-defined,
    a contradiction with
    the inextendability  of $\gamma$.
\smallskip

\textbf{Proof of~\ref{GProperCones-to-WGNCH}}.     Fix $\gamma:[0,b)\to M$ a future inextendable null
    geodesic.
    Consider $H=\partial I^{+}(\gamma_0)$ the future light-cone emanating from
    $\gamma_0$.
    Applying part~\ref{GProper-to-WGNCH}, yields  that either $b=\infty$ or
    $\gamma$ leaves $H$.
    In the latter case,  there exists $t\in (0,b)$ such that
    $\gamma_0\ll\gamma_{t}$.
\end{proof}

\subsection{Penrose's singularity theorem}\label{Ss:penrose}
Having thoroughly discussed the completeness issue, a possible way to state 
the desired result on the existence of trapped sets is the following: 
if $S$ is a compact
achronal surface, such that $(\partial I^+(S),G,\mm)$ satisfies the
$\NC^e(N)$ with a proper gauge
and $S$ is $(G,\mm)$-future converging, then $\partial I^{+}(H)$ is compact.

The $\NC^e(N)$ has already been defined.  It remains to discuss the
``$(G,\mm)$-future converging'' condition.
This is a delicate point, firstly because of the non-smooth synthetic setting, secondly because 
of the presence  of initial points of $H$ inside $S$, where the parametrization given by the
gauge is not available.
\\
We next define the ``$(G,\mm)$-future converging'' condition, followed by a discussion of its compatibility with the smooth notions.

\begin{definition}\label{def:future-converging}
Let $(H,G,\mm)$ be a synthetic null
hypersurface satisfying the $\NC^e(N)$ condition,
with $H=\partial I^+(S)$, for some achronal set $S$.

We say that $S$ is \emph{$(G,\mm)$-future converging},
if there exists $\theta<0$, such that
\begin{equation}\label{eq:mean-curvature}
   \limsup_{\epsilon \to 0^{+}}
\frac{\mm(S_\epsilon^{+}\cap \relation(A))-\epsilon\, \mm^{+}_{G}(S;A)
    }{
      \epsilon^2/2}
    \leq\theta
    \,
    \mm^{+}_{G}(S;A)
    ,
  \end{equation}  
for all $A\subset \noending{H}$.
\end{definition}  
 Recall that the classical future convergence requires the mean curvature of
  $S$ on both (incoming and outgoing) sides of $\partial I^{+}(S)$ to be strictly negative.
  If $S$ is smooth and compact, this is equivalent to assume that its mean
  curvature (in both the incoming and outgoing directions) is bounded above by some $\theta<0$.
  Equation~\eqref{eq:mean-curvature} is then an integral
  characterization of this curvature bound. The basic idea is that the mean curvature is given by first variation of area, and area is in turn given as first variation of volume. Notice indeed that~\eqref{eq:mean-curvature} corresponds to a second order Taylor expansion of the volume (cf.~\cite[Remark 5.4]{CaMo:20}, for a similar construction in the timelike framework). 

  A careful reader may wonder about the necessity of the set  $A\subset \noending{H}$ in Definition~\ref{def:future-converging} and whether one can simplify the definition by stating 
  everything just in terms of $S$. The reason for including $A\subset \noending{H}$ in Definition~\ref{def:future-converging} is that, in general, there is more than one generator for $H$ leaving at
  each point of $S$ (in the classical setting, exactly two generators) and it is desirable to have a criterion to select exaclty one.
  Restricting the analysis to $\relation(A)$, for any $A\subset \noending{H}$, allows to overcome this issue.
\smallskip

In the statement of the synthetic Penrose's theorem,
we will assume the gauge to satisfy the additional condition
  \begin{equation}
    \label{eq:infimum-of-gauge}
    \inf_{x\in H_\alpha}
    G(x)
    =
    0  
    ,
    \qquad
    \text{ for $\q$-a.e.\ } \alpha\in Q
    ,
  \end{equation}
where $Q$ is the quotient set of \Cref{T:disintegration}. In the smooth
setting, one can always find a gauge satisfying~\eqref{eq:infimum-of-gauge}, as discussed in \Cref{R:compatibility-penrose}.

Moreover we recall the classical terminology that a compact achronal set $S$ is \emph{future trapped} if $\partial I^+(S)$ is compact.

\begin{theorem}[Penrose's theorem in a synthetic setting]\label{T:penrose}
Let $(H,G,\mm)$ be a null non-branching, synthetic null 
hypersurface satisfying the $\NC^e(N)$ condition for some $N> 2$,
where $H=\partial I^+(S)$ for some compact achronal set $S$, and  $\supp\mm=H$.
Assume moreover~\eqref{eq:infimum-of-gauge}.

  If $S$ is $(G,\mm)$-future converging,
  then $\norm{G}_{L^\infty(\mm)}<\infty$.
  In particular, if the gauge $G$ is proper, then   $H$ is compact; i.e., $S$ is future trapped.
\end{theorem}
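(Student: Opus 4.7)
The strategy is to reduce the statement, via disintegration along null generators, to a one-dimensional Raychaudhuri-type estimate, and then extract compactness from concavity of $h_\alpha^{1/(N-2)}$.

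\smallskip
\emph{Step 1 (disintegration).} Apply \Cref{T:disintegration} to write $\mm=\int_Q \mm_\alpha\,\q(d\alpha)$, and invoke \Cref{T:localization} together with the normalisation~\eqref{eq:infimum-of-gauge} to identify each ray $H_\alpha$ (via $\gflow_G(\alpha,\,\cdot\,)$) with an interval $I_\alpha\subset[0,\infty)$ satisfying $\inf I_\alpha=0$, so that $\mm_\alpha=h_\alpha(t)\,dt$ with $g_\alpha:=h_\alpha^{1/(N-2)}$ concave. Since by \Cref{T:initialpoints} $\mm(\initial\cup\final)=0$, the hypothesis $\supp\mm=H$ forces $h_\alpha(0^+)>0$ for $\q$-a.e.\ $\alpha$; moreover, since $H=\partial I^{+}(S)$ with $S$ achronal, $S\cap H_\alpha$ reduces for such $\alpha$ to the initial point of the generator, located at $G=0$.

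\smallskip
\emph{Step 2 (ray-by-ray expansion bound).} A direct computation using the definition of $S_\epsilon^{+}$ and $\gflow_G$ gives
\begin{equation*}
  \mm(S_\epsilon^{+}\cap\relation(A))=\int_{\QQ(A)} v_\alpha(\epsilon)\,\q(d\alpha),
  \qquad v_\alpha(\epsilon):=\int_0^\epsilon h_\alpha(t)\,dt.
\end{equation*}
Concavity of $g_\alpha$ yields right-differentiability of $h_\alpha$ at $0$, hence the Taylor expansion $v_\alpha(\epsilon)=\epsilon\,h_\alpha(0^+)+\tfrac{\epsilon^2}{2}(h_\alpha)'_+(0)+o(\epsilon^2)$. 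Interchanging $\limsup$ and integral (justified by the local uniform bound on the conditional measures in item~(4) of \Cref{T:disintegration}, combined with Lipschitz control on $g_\alpha$ near $0$ coming from concavity), and testing~\eqref{eq:mean-curvature} with $A=\QQ^{-1}(B)$ for arbitrary measurable $B\subset Q$, the arbitrariness of $B$ yields
\begin{equation*}
  (h_\alpha)'_+(0)\leq\theta\,h_\alpha(0^+)\qquad\text{for $\q$-a.e. }\alpha,
\end{equation*}
equivalently $g_\alpha'(0^+)\leq\tfrac{\theta}{N-2}\,g_\alpha(0^+)<0$.

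\smallskip
\emph{Step 3 (geometric conclusion).} By concavity, $g_\alpha(t)\leq g_\alpha(0^+)+t\,g_\alpha'(0^+)$; since $g_\alpha\geq0$, this forces $g_\alpha(t)=0$ for every $t\geq -g_\alpha(0^+)/g_\alpha'(0^+)\leq (N-2)/|\theta|$. Hence $\supp h_\alpha\subset[0,(N-2)/|\theta|]$ for $\q$-a.e.\ $\alpha$, so that $\|G\|_{L^\infty(\mm)}\leq(N-2)/|\theta|$. Using $\mm(\initial\cup\final)=0$ once more and $\supp\mm=H$, one has $H\subset\overline{G^{-1}([0,(N-2)/|\theta|])}$; if $G$ is proper, the latter set is compact and so is $H$, i.e., $S$ is future trapped.

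\smallskip
The principal technical obstacle lies in \emph{Step 2}, namely, the rigorous passage from the integrated bound~\eqref{eq:mean-curvature} to a pointwise inequality on $(h_\alpha)'_+(0)$. The two ingredients to be orchestrated are dominated convergence for the second-order quotient (supplied by local uniform bounds on $\mm_\alpha$ and Lipschitz control of $g_\alpha$ near its left endpoint) and an inner-approximation of $\QQ^{-1}(B)\cap\noending{H}$ by admissible test-sets $A\subset\noending{H}$, so that~\eqref{eq:mean-curvature} can actually be applied in the required generality.
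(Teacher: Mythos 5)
Your overall strategy—disintegrate $\mm$ along the null generators via \Cref{T:disintegration} and \Cref{T:localization}, normalise with~\eqref{eq:infimum-of-gauge}, translate the $(G,\mm)$-future-converging hypothesis into a fibre-wise estimate, and extract the bound on ray lengths from the concavity of $g_\alpha = h_\alpha^{1/(N-2)}$—matches the paper's. Your Step~3 conclusion ($g_\alpha$ vanishes beyond $(N-2)/|\theta|$, hence $\|G\|_{L^\infty(\mm)}\le (N-2)/|\theta|$ and, with properness of $G$, compactness of $H=\supp\mm$) coincides with the paper's deduction that $b_\alpha\le -(N-2)/\theta$.

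The gap you yourself flag in Step~2 is genuine, and the paper does not resolve it by ``dominated convergence plus Lipschitz control''—it avoids the exchange of limits altogether. You propose to pass to the limit in $\frac{2}{\epsilon^2}\int_0^\epsilon\bigl(h_\alpha(t)-h_\alpha(0)\bigr)\,\de t$ inside the $\q$-integral, appealing to item~(4) of \Cref{T:disintegration} and to ``Lipschitz control of $g_\alpha$ near $0$.'' But item~(4) only bounds the masses $\mm_\alpha(\mathcal{K})$, not the pointwise densities or their increments, and concavity of $g_\alpha$ gives no uniform upper bound on $g_\alpha'(0^+)$ across $\alpha$ (e.g., $g_\alpha(t)=\sqrt{t}$ near $0$ is concave, bounded, with infinite right derivative); so there is no a priori dominating function for the second-order quotient. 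The paper instead combines the $\CD(0,N-1)$ lower bound $h_\alpha(t)\ge h_\alpha(0)\bigl(\tfrac{b_\alpha-t}{b_\alpha}\bigr)^{N-2}$ with Bernoulli's inequality (or the elementary $x^{N-2}\ge x$ on $[0,1]$ when $2<N<3$) to obtain the pointwise-in-$\epsilon$ estimate
\begin{equation*}
  \frac{2}{\epsilon^2}\int_0^\epsilon\bigl(h_\alpha(t)-h_\alpha(0)\bigr)\,\de t
  \;\ge\;
  h_\alpha(0)\,\frac{2-N}{b_\alpha},
  \qquad \forall\,\epsilon>0,
\end{equation*}
which is integrated against $\q$ \emph{before} letting $\epsilon\to 0^+$; no limit interchange is required, and the future-converging inequality then directly yields $b_\alpha\le -(N-2)/\theta$ for $\q$-a.e.\ $\alpha$. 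Two further points: the identity $\mm^+_G(S;A)=\int_B h_\alpha(0)\,\q(\de\alpha)$ that you quote in passing is itself established in the paper only after a Lusin-type restriction to a compact $K\subset Q$ on which $\|h_\alpha\|_{L^\infty(0,1)}$ is uniformly bounded (item~(4) alone does not provide this), and your inference in Step~1 that $\supp\mm=H$ forces $h_\alpha(0^+)>0$ for $\q$-a.e.\ $\alpha$ is not justified (a concave bump with $h_\alpha(0^+)=0$ is not excluded by full support); the paper does not rely on it.
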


\begin{proof}
We divide the proof in two parts.

{\bf Step 1.}
By the Localization \Cref{T:localization}, there exists a disintegration of $\mm$, 
given by the measures $\mm_\alpha$ satisfying the $\CD(0,N-1)$ condition.
Let $h_\alpha:[0,b_\alpha)\to{[0,\infty)}$ be the density of $\mm_\alpha$.
For the class of synthetic null hypersurfaces 
fulfilling~\eqref{eq:infimum-of-gauge}
the relative Minkowski content can be more easily described.
In particular, it will be convenient to look for a class of sets $A \subset \noending{H}$ for which the following identity is valid: 
\begin{equation}\label{E:meancurvature}
   \int_{B} h_\alpha(0)\,\qq(d\alpha) 
   =
   \mm^{+}_{G}(S;A),  
\end{equation}
where $B = \relation(A) \cap Q$. 
To establish~\eqref{E:meancurvature}, notice that 
\[
    \limsup_{\epsilon\to 0^{+}}
    \frac{
      \mm(S^+_{\epsilon}\cap \relation (A))
    }{\epsilon}
= 
  \limsup_{\epsilon\to 0^{+}}
\frac{1}{\epsilon} 
\int_{B}\int_0^\epsilon h_\alpha(t)\, \de t.
\]
Since $t\mapsto h_\alpha(t)$ is continuous, we can consider the 
real-valued measurable map 
$Q \ni \alpha \mapsto \| h_\alpha(\cdot)\|_{L^\infty(0,1)}$. 
For each $\delta >0$ there exists a compact set $K\subset Q$
such that $\qq(K)> 1- \delta$ and, for all $\alpha \in K$, 
it holds that
$\| h_\alpha(\cdot)\|_{L^\infty(0,1)} \leq C$, 
for some positive constant. 
 A direct application of dominated convergence theorem implies that for all $A\subset \relation(K)\cap \noending{H}$,
the identity in \Cref{E:meancurvature} is valid. 

In the next step, we will use \Cref{E:meancurvature} in order to obtain an upper bound on $b_\alpha$ independent on $\delta>0$; this will imply  that $\|G\|_{L^\infty}<\infty$.

{\bf Step 2.}
The concavity of the function $t\mapsto h_\alpha(t)^{\frac{1}{N-2}}$ implies that
\begin{equation}\label{eq:mcp-penrose}
    h_\alpha(t)
    \geq
h_\alpha(0)
    \bigg(
    \frac{b_\alpha-t}
    {b_\alpha }
    \bigg)^{N-2}
    ,
    \qquad
    \forall t\in[0,b_\alpha]
    .
\end{equation}
If  $N\geq 3$, Bernoulli's inequality
(i.e., $x^{N-2} \geq 1 + (N-2)(x-1)$,  for all $x\geq 0$) yields
\begin{equation}\label{eq:expansion-convexity}
  \begin{aligned}
  \int_{0}^{\epsilon}
\bigg(
      \bigg(
    \frac{b_\alpha-t}
    {b_\alpha }
    \bigg)^{N-2}
-
1
\bigg)
  \,\de t
    \geq
    \frac{2-N}{b_\alpha}
\int_{0}^
{\varepsilon}
    t
      \,\de t
    =
    \epsilon^2
    \frac{2-N}{2b_\alpha}
    ,
    \qquad
    \forall
    \epsilon >0
    .
  \end{aligned}
\end{equation}
If $2<N< 3$, the trivial inequality $x^{N-2} \geq  x$, holding for all $x\in [0,1]$, implies that
\begin{equation}\label{eq:expansion-convexityN-less-3}
  \begin{aligned}
  \int_{0}^{\epsilon}
\bigg(
      \bigg(
    \frac{b_\alpha-t}
    {b_\alpha }
    \bigg)^{N-2}
-
1
\bigg)
  \,\de t
    \geq
    {b_\alpha}
\int_{0}^
{\varepsilon}
    t
      \,\de t
    =
    \frac{\epsilon^2}
    {2b_\alpha}
    ,
    \qquad
    \forall
    \epsilon >0
    .
  \end{aligned}
\end{equation}
In the rest of the proof, we will assume that $N\geq 3$ and we will use~\eqref{eq:expansion-convexity}; in case $2<N<3$, the argument is completely analogous, using~\eqref{eq:expansion-convexityN-less-3}.

Let $\theta<0$ be given by \Cref{def:future-converging}.
Consider any $\delta>0$
and the let $K\subset Q$ be 
given by the step 1 of the proof.
Let
$A\subset \relation(K)\cap \noending{H}$ be any measurable set where~\eqref{E:meancurvature} holds, with  $B=\relation(A)\cap Q$.
Then~\eqref{eq:mean-curvature} can written as
\begin{align*}
   \theta
  \int_{B}
  h_\alpha(0)
  \,
  \q(\de\alpha)
\overset{\eqref{eq:mean-curvature}}&{\geq}
     \limsup_{\epsilon \to 0^{+}}
  \frac{2}{\epsilon^2}
  \int_{B}
  \int_{0}^{\epsilon}
  (h_\alpha(t)-h_\alpha(0))\,\de t
  \,
    \q(\de\alpha)
  \\
  \overset{\eqref{eq:mcp-penrose}}&{\geq}
       \limsup_{\epsilon \to 0^{+}}
  \frac{2}{\epsilon^2}
  \int_{B}
  h_\alpha(0)
  \int_{0}^{\epsilon}
      \bigg(
      \bigg(
    \frac{b_\alpha-t}{b_\alpha}
    \bigg)^{N-2}
-
1
\bigg)
  \,\de t
  \,
  \q(\de\alpha) 
    \\
  \overset{\eqref{eq:expansion-convexity}}&{\geq}
  \int_{B}
  \,
  h_\alpha(0)\frac{2-N}{b_\alpha}
  \,
  \q(\de\alpha)
  .
\end{align*}
The arbitrariness of $A\subset \relation(K) \cap \noending{H}$ implies the 
arbitrariness of $B\subset K$, and thus 
\begin{equation}\label{eq:balphaleqN-2Theta}
b_\alpha \leq -\frac{N-2}{\theta}
,
\qquad
\text{ for $\q$-a.e.\ }\alpha\in K
.
\end{equation}
Since the estimate is independent of $\delta >0$, taking the limit as $\delta \to 0$ yields that the inequality in~\eqref{eq:balphaleqN-2Theta} is valid for $\qq$-a.e.\;$\alpha \in Q$.
Combining this with~\eqref{eq:infimum-of-gauge}, we deduce 
that
\[
G(x)\in\left[0,\frac{2-N}{\theta}  \right]
, \quad
\text{for $\mm$-a.e.\ $x\in \noending{H}$}.
\]
The properness of $G$ implies that $G^{-1}([0,\frac{2-N}{\theta} ])$ is precompact. 
 Therefore $\mm$ is concentrated on a precompact set, yielding that
  $H=\supp\mm$ is compact.
\end{proof}

% \begin{remark}[Properness of the gauge as a ``synthetic null-geodesic completeness"]
%   Let us discuss the properness condition of gauge functions.
%   %
%   In the smooth setting, consider a compact $(n-2)$-space-like
%   manifold $S$ and its closed achronal set $H$.
%   %
%   Provided that the space is future light-like complete, then it is possible to
%   endow $H$ by a natural gauge function which is proper (we will use the notation of~\Cref{R:compatibility}).
%   Indeed, in this case, $\gflow_{L}$ coincides with the exponential map of the null
%   vector field $L$.
%   %
%   The properness of the gauge follows by the compactness of $S$, coupled with the fact
%   that the exponential map is everywhere defined on future light-like
%   vectors, thanks to the future light-like completeness 
%   (cf.~\cite[Prop.~60]{O'Neill}).

%   Thus, \emph{the properness of the gauge shall be seen as a synthetic counterpart of the null geodesic completeness}. Indeed the latter is formulated in terms of solutions of the geodesic equation, which is not available in the synthetic setting (or for continuous spacetimes).
% \end{remark}

\begin{remark}\label{R:non-constant-infimum}
Inspecting the proof of \Cref{T:penrose}, one can check that the assumption~\eqref{eq:infimum-of-gauge} can be replaced by the following weaker (but slightly more technical) condition: 
the function 
    $
    \alpha\mapsto \mathcal{G}(\alpha):=  \inf_{x\in H_\alpha} G(x)
    $ belongs to $L^\infty(\qq)$.
    In this case, the proof gives that 
    $G\leq \frac{2-N}{\theta}+\|\mathcal{G}\|_{L^\infty(\qq)}$, $\mm$-a.e.. The same argument as above yields that, if $G$ is proper, then $H$ is compact.
\end{remark}

  \begin{remark}
    In the spirit of \Cref{P:covariance-area}, we discuss how
    \Cref{T:penrose} behaves under a different choice of  gauge and
     reference measure.
    Namely, let $H=\partial I^{+}(S)$ and let $G_{i}$ be a gauge for $H$
    and $\mm_{i}$ be a measure, $i=1,2$.
    Assume that $G_1$ and $G_2$ (resp.\ $\mm_1$ and $\mm_2$) are related by the covariance relation   (cf.\ \Cref{P:characterization-equivalence})
    \[
      G_1=f+hG_2, \quad \mm_1=\frac{1}{h}\mm_2,
    \]
    for some  transverse functions $h$ and $f$, 
    \begin{equation}\label{eq:boundfhC}
    -C\leq f \leq C, \quad 1/C \leq h\leq C,
    \end{equation}
    for some constant $C\geq 1$.
 Let us discuss the invariance of the assumptions of \Cref{T:penrose}, under such transformations.
    \begin{itemize}
    \item The assumption that the reference measure has full support is trivially preserved (i.e.,
      $\supp \mm_1=H\iff\supp \mm_2=H$).
    \item Eq.~\eqref{eq:infimum-of-gauge} is not  preserved exactly; however,
      one can apply \Cref{R:non-constant-infimum}, noting that
        the condition $\norm{\mathcal{G}}_{L^{\infty}(\q)}<\infty$ is preserved.
    \item Regarding the  future converging assumption, following the argument of
      \Cref{P:covariance-area} one can prove that $S$ is
      $(G_1,\mm_1)$-future converging if and only if it is
      $(G_2,\mm_2)$-future converging.
      In this case, the constant $\theta$ is replaced by
      $\theta/C$.
    \end{itemize}
    Notice that also the thesis of \Cref{P:covariance-area} is covariant, i.e., properness of the
    gauge does not depend on the choice of the gauge but only
    on its equivalence class; indeed,
    $G_2^{-1}([-a,a])\subset G_1^{-1}([-C(a+1),C(a+1)])$.

    Finally, let us point out that the bound~\eqref{eq:boundfhC} is necessary, otherwise both the hypothesis and
    the thesis of \Cref{P:covariance-area}  would cease to be invariant.
    
  \end{remark}

      The following corollary follows by combining \Cref{T:penrose}
and the $C^0$-extension
of the classical result by Penrose, stating that there are no future
trapped sets in a spacetime admitting a non-compact Cauchy surface,
as specified in Th.~4.9 of~\cite{Ling}, after~\cite[Th.~2.67]{Min}.
We refer
to~\cite{Chr-Grant, SaC0, Ling, Minguzzi-Review2019} for the basics of causality theory and useful notions for continuous spacetimes.

\begin{corollary}[Penrose's singularity theorem in $C^0$-spacetimes]\label{Cor:PenroseC0}
Let $(M,g)$ be a spacetime endowed with a continuous Lorentzian metric.
Let $S\subset M$ be a compact achronal set. Let $H=\partial I^+(S)$ and endow $H$ with a gauge function $G$ satisfying~\eqref{eq:infimum-of-gauge} and a positive Radon measure $\mm\in \M^+(H)$ with $\supp \mm=H$.
Assume that $H$ is null non-branching%
\footnote{%
The assumption that $H$ is null non-branching is always satisfied
if 
\begin{itemize}
\item $\noending{H}$ is $C^{1,1}$ and $g$ is locally Lipschitz 
(see  \Cref{prop:Lipschitz-nonBranch}), 
\item  $g$ is $C^2$, 
without any further regularity assumption on $H$ (see \Cref{C:null-non-branching-c2}).
\end{itemize}
}
and that:
\begin{enumerate}
\item The synthetic null hypersurface $(H,G,\mm)$ satisfies the
$\NC^e(N)$ condition, for some $N>2$.
\item $(M,g)$ admits a non-compact Cauchy surface.
\item $S$ is $(G,\mm)$-future converging.
\end{enumerate}
 Then the gauge $G$ cannot be proper.
\end{corollary}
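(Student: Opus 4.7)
The plan is to argue by contradiction, reducing to the synthetic Penrose theorem \Cref{T:penrose} and then invoking the $C^0$-version of the classical obstruction to future trapped sets in the presence of a non-compact Cauchy surface. Specifically, I would suppose for contradiction that the gauge $G$ is proper. All hypotheses of \Cref{T:penrose} are then verified: $(H,G,\mm)$ is a null non-branching synthetic null hypersurface satisfying $\NC^e(N)$ for some $N>2$, $H = \partial I^+(S)$ with $S$ compact achronal, $\supp\mm = H$, assumption~\eqref{eq:infimum-of-gauge} holds, and $S$ is $(G,\mm)$-future converging. \Cref{T:penrose} therefore yields that $H = \partial I^+(S)$ is compact, i.e., $S$ is a future trapped set in the sense of the classical Penrose terminology.

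Next, I would invoke the $C^0$-extension of the classical non-existence result for future trapped sets. In the smooth setting this is well known (see~\cite[Prop.~8.2.3]{HawEll}); its extension to continuous Lorentzian metrics is precisely the content of~\cite[Th.~4.9]{Ling} (building on~\cite[Th.~2.67]{Min}), which states that a $C^0$-spacetime admitting a non-compact Cauchy surface cannot contain a compact achronal set $S$ whose achronal boundary $\partial I^+(S)$ is compact. Since hypothesis~(2) provides a non-compact Cauchy surface in $(M,g)$, and we have just produced a future trapped set $S$, we obtain the desired contradiction. Hence $G$ cannot be proper.

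The main (and essentially only) conceptual point is the bookkeeping: one must check that the notion of future trapped set used in \Cref{T:penrose} (namely, compactness of $\partial I^+(S)$ for a compact achronal $S$) coincides with the one appearing in the $C^0$-causality literature on which~\cite{Ling, Min} are based. This is straightforward because both sides work with the closed achronal boundary and the continuous causal structure of $(M,g)$; no additional regularity of $H$ is needed for Ling's theorem. The remaining steps are purely an application of two previously established results, so there is no genuine obstacle beyond ensuring that the standing assumption ``$H$ null non-branching'' is compatible with the regularity of $g$, which is exactly the content of the footnote in the statement (via \Cref{prop:Lipschitz-nonBranch} or \Cref{C:null-non-branching-c2}).
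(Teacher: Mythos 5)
Your proposal is correct and follows essentially the same route as the paper: assume $G$ proper, apply \Cref{T:penrose} to conclude $\partial I^{+}(S)$ is compact (so $S$ is future trapped), and derive a contradiction from~\cite[Th.~4.9]{Ling} since $(M,g)$ admits a non-compact Cauchy surface. The bookkeeping remark about matching definitions of future trapped sets is sensible but not a deviation from the paper's argument.
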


\begin{proof}
Assume by contradiction that $(M,g)$  contains a compact achronal set $S\subset M$ such that $H=\partial I^+(S)$ satisfies the conditions (1), (2), (3) and moreover the gauge $G$ is proper. Then, Theorem~\ref{T:penrose} implies that $S$ is future trapped.
Since by assumption $(M,g)$ admits a non-compact Cauchy surface, we obtain a contradiction with~\cite[Thm.\,4.9]{Ling}.
\end{proof}

\begin{remark}
The corollary above is valid also for the more general setting of closed cone structures.
In this case, one needs to assume the existence of a stable Cauchy hypersurface and apply~\cite[Th.~2.67]{Min}, in place of~\cite[Th.~4.9]{Ling}. 
\end{remark}

Finally, we state and prove a version of Penrose's singularity theorem where the incomplete geodesic is \emph{maximizing}. The maximizing property was implicit in Penrose's arguments~\cite{Penrose65,Penrose-DiffTopGR}, for a discussion see for instance~\cite[Sec.\,12.4]{BEE-Book} and~\cite[Th.\,2.67]{Min}. Nevertheless, we include it with a proof for the reader's convenience.

\begin{corollary}[Penrose's  maximizing incompleteness Theorem]\label{cor:SharpenedPenrose}
Let $(M,g)$ be a spacetime endowed with a  $C^2$ Lorentzian metric.
Assume that:
\begin{enumerate}
    \item 
    The \NEC\ holds.
    \item 
    $M$ admits a non-compact Cauchy hypersurface.
    \item 
    There exists a future-converging compact $(n-2)$-dimensional achronal 
    space-like  $C^2$ submanifold $S$.
    %(i.e., the mean curvature vector of $S$ is timelike and past-pointing).
\end{enumerate}

Then there exists an inextendible (on the right),  null geodesic $\gamma:[0,b)\to \R$, $b<\infty$. 

Moreover, $\gamma$ is maximizing, i.e., $\tau_{g}(\gamma_{s}, \gamma_{t})=0$ for all $s\leq t\in [0,b)$.
\end{corollary}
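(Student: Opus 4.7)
The plan is to reduce the claim to \Cref{Cor:PenroseC0}, applied to $H = \partial I^{+}(S)$ with the natural gauge $G$ and rigged-type measure $\mm$ from \Cref{R:compatibility-penrose}, and then to promote the resulting incomplete null geodesic to a pairwise achronal (hence maximizing) one via a push-up argument. Assuming $n\geq 3$, the preliminary hypotheses of \Cref{Cor:PenroseC0} are routine: $H$ is null non-branching by \Cref{C:null-non-branching-c2}; $(H,G,\mm)$ satisfies $\NC^{e}(n)$ with $n>2$ by \Cref{T:compatibility-penrose}; the natural gauge constructed in \Cref{R:compatibility-penrose} satisfies \eqref{eq:infimum-of-gauge} and $\supp\mm = H$ by its very construction; and the non-compact Cauchy hypersurface is among the hypotheses.

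The main substantive verification is that the classical future convergence of $S$ implies $(G,\mm)$-future convergence in the sense of \Cref{def:future-converging}. For this I would Taylor expand the density $h_\alpha(t)$ of the conditional measure $\mm_\alpha$ from the disintegration \Cref{T:disintegration} along each null generator, using
\[
\int_0^\epsilon h_\alpha(t)\,\de t = \epsilon\,h_\alpha(0) + \frac{\epsilon^2}{2}\,h_\alpha'(0) + O(\epsilon^3),
\]
and identifying $h_\alpha'(0)/h_\alpha(0)$ with the classical null expansion scalar of the generator emanating from $\alpha\in\initial\subset S$; this identification matches the rigged-measure construction through the exponential pull-back in \Cref{R:compatibility-penrose}. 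Classical future convergence of $S$ makes this expansion strictly negative at every point of the compact $S$, producing a uniform upper bound $\theta<0$ and hence the integral inequality \eqref{eq:mean-curvature} for every Borel $A\subset\noending{H}$. I expect this identification between the synthetic infinitesimal quantity and the classical mean curvature to be the main technical step.

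Granted the above, \Cref{Cor:PenroseC0} yields that the natural gauge $G$ is not proper. The contrapositive of the first statement of \Cref{prop:wgnc} then forces $(M,g)$ to fail weak future null completeness, and unwinding \Cref{defn:wgnc} produces an inextendible null geodesic $\gamma:[0,b)\to M$ with $b<\infty$ and $\gamma_0\not\ll\gamma_t$ for every $t\in[0,b)$.

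To upgrade this to the maximizing property, suppose for contradiction that $\gamma_s\ll\gamma_t$ for some $0\leq s<t<b$. Since $\gamma|_{[0,s]}$ is a causal curve we have $\gamma_0\leq\gamma_s$, so the push-up property yields $\gamma_0\ll\gamma_t$, contradicting the previous paragraph. Hence $\gamma_s\not\ll\gamma_t$ for all $s\leq t$, which in a $C^2$ spacetime is equivalent to $\tau_g(\gamma_s,\gamma_t)=0$ (since $\tau_g>0$ is equivalent to the existence of a future-directed timelike curve connecting the two points).
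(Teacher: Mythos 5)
Your proof is correct and follows essentially the same route as the paper's own: reduce to \Cref{Cor:PenroseC0} via the natural gauge and rigged measure of \Cref{R:compatibility-penrose}, verify the hypotheses (null non-branching via \Cref{C:null-non-branching-c2}, the $\NC^e(n)$ condition, and $(G,\mm)$-future convergence via a second-order Taylor expansion identifying $h_\alpha'(0)/h_\alpha(0)$ with the classical null expansion scalar), and conclude via \Cref{prop:wgnc} that the natural gauge fails to be proper. Two small remarks: your citation of \Cref{T:compatibility-penrose} is the more directly applicable compatibility result here (the paper cites \Cref{thm:compatibilityNEC-NCe}, which is stated for $C^2$ null hypersurfaces with a cross-section, whereas $\partial I^+(S)$ is in general only locally Lipschitz), and your explicit push-up argument upgrading $\gamma_0\not\ll\gamma_t$ to full pairwise achronality $\gamma_s\not\ll\gamma_t$ is a useful fill-in of a step that the paper's proof leaves implicit when it asserts the geodesic is a ``global maximizer.''
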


\begin{proof}  
The existence of a Cauchy hypersurface implies that $(M,g)$ is strongly causal.
We can therefore apply \Cref{R:compatibility-penrose} and endow 
$H:=\partial I^{+}(S)$ with a natural gauge $G$ and the associated rigged measure $\mm$, 
making $(H,G,\mm)$ a synhtetic null hypersurface, which, by 
\Cref{thm:compatibilityNEC-NCe}, satisfies $\NC^e(n)$.
As observed after~\eqref{eq:defGhatH},
  $G$ can be continuously extended to $0$ on the set
  $\initial$ of initial points of the null generators of $H$, thus it satisfies~\eqref{eq:infimum-of-gauge}.  We next claim that
$S$ is 
$(G,\mm)$-future-converging.
Indeed, one can check~\eqref{eq:mean-curvature} by 
performing a second-order Taylor expansion in $\epsilon$ of $\mm(A^+_\epsilon)$ and then apply the future-converging hypothesis (cf.~\cite[Remark~5.4]{CaMo:20}).
Applying \Cref{Cor:PenroseC0}, yields that $G$ is not proper.
By the first point of \Cref{prop:wgnc}, it follows that $(M,g)$  is not weakly 
future null complete; i.e, there exists an inextendible null geodesic like in the claim which is a global maximizer.
\end{proof}

\section{Stability of the \texorpdfstring{$\NC^e(N)$}{NCe{(N)}} condition}\label{Sec:StabNCe}

In this section we state and prove two stability theorems for the
\NEC. The first one is a null counterpart of Sturm's  stability of the $\CD$ condition~\cite{sturm:I, sturm:II}.

We recall that, in a metric space $(X,\sfd)$ given a sequence of
closed sets $B_{n}$, $n\in\N$ the   Kuratowski-lim-sup
 of $B_{n}$ is
\begin{equation}\label{eq:kuratowski-definition}
    \text{K-}\limsup_{n\to\infty}
    B_{n}
    =
    \{
    x\in X
    \colon 
    \exists
    \text{ a subsequence }
    x_{n_{k}}\in B_{n_{k}}
    \text{ such that }
    x_{n_{k}}\to x
    \}
    .
\end{equation}

We recall the definition of push-up property.

\begin{definition}\label{def:push-up}
  Let $(X,\leq,\ll)$ be a causal space.
  We say that it has the \emph{push-up property}, if $\forall x,y,z\in X$
  \begin{equation}
    x\leq y\ll z
    \text{ or }
    x\ll y\leq z
    \qquad
    \implies
    \qquad
    x\ll z
    .
  \end{equation}
\end{definition}

The push-up property is satisfied in 
locally-Lipschitz Lorentzian manifolds~\cite{Chr-Grant}
and, more generally, in Lorentzian pre-length spaces~\cite{KS} (see also~\cite{Ling} for the case of $C^0$-Lorentzian metrics). 

\begin{theorem}\label{thm:StabNCProb}
  Let $(X,\ll,\leq, \mathfrak{T})$ be a topological causal space 
  satisfying the push-up property and let $N>1$.
  Let $(H_{n},G_{n},\mm_{n})$, $n\in\N\cup\{\infty\}$, be a sequence of 
  synthetic null hypersurfaces, with $\mm_{n}\in\Prob(H_{n})$.
  Assume the following.
  \begin{enumerate}
  \item
    There exist  monotone
   (recall \Cref{D:monotoneplan})
    transport plans $\Lambda_{n}\in
    \Pi(\mm_\infty,\mm_{n})$,
    such that
    \begin{equation}
      \label{eq:convergence-lambda}
       \Lambda_{n}
      \weak
      (\id,\id)_{\sharp}
      \mm_\infty
      ,
      \qquad
      \text{ in duality with continuous and bounded functions}
      .
    \end{equation}
    \item
    For every compact set $K\subset X$, the set of curves
    \begin{equation}
    \label{eq:compactness-gauges}
        \{\gamma\in C([0,1];X)\colon
        \gamma \in C_{G_{n}}\text{ for some 
        $n\in\N$ and }
        \gamma_{i}\in K,\, i=0,1
        \}
        \text{ is precompact.}
    \end{equation}
  \item 
  Denoted by $C_{G_{n}}$  the set of $G_{n}$-causal curves, it 
  holds that
  \begin{equation}\label{eq:kuratowski-gauges}
      \text{\rm K-}\limsup_{n\to\infty}
      C_{G_{n}}
      \subset C_{G_\infty}
      .
  \end{equation}
    \end{enumerate}

    If the synthetic null hypersurfaces $(H_{n},G_{n},\mm_{n})$ satisfy the $\NC^e(N)$
    condition, for all $n\in \N$, then also
    $(H_\infty,G_\infty,\mm_\infty)$ satisfies the $\NC^{e}(N)$
    condition, as well.
\end{theorem}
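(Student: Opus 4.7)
The plan is to follow the approach of Sturm for the stability of the $\CD$ condition, adapting it to the null causal framework: given $\mu_0, \mu_1 \in \Prob(H_\infty)$ with $\Pi_\leq(\mu_0, \mu_1) \neq \emptyset$, I would lift them to approximating measures on each $H_n$ via $\Lambda_n$, invoke the $\NC^e(N)$ condition on $H_n$ to produce dynamical plans $\nu_n$, and pass to the limit via Prokhorov. Concretely, I would first disintegrate $\Lambda_n = \int_{H_\infty} \delta_{x} \otimes \Lambda_n^{x}\, \mm_\infty(\de x)$ with conditional probabilities $\Lambda_n^{x} \in \Prob(H_n)$, and set $\mu_i^n := \int_{H_\infty} \Lambda_n^{x}\, \mu_i(\de x)$ for $i = 0,1$. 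Assumption (1) implies $\mm_n \weak \mm_\infty$ and $\mu_i^n \weak \mu_i$, while Jensen's inequality applied to the convex function $z \log z$ and the conditional expectation induced by $\Lambda_n$ yields the entropy-contraction estimate
\[
\Ent(\mu_i^n | \mm_n) \leq \Ent(\mu_i | \mm_\infty), \qquad i = 0,1.
\]

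Next I would lift a causal coupling $\pi_\infty \in \Pi_\leq(\mu_0, \mu_1)$ to causal couplings $\pi_n \in \Pi_\leq(\mu_0^n, \mu_1^n)$. Decomposing $\pi_\infty = \pi_\infty^{d} + \pi_\infty^{nd}$ into its diagonal and off-diagonal parts, on the off-diagonal part (supported on $\{x < y\}$) I would use the independent lift $(x,y) \mapsto \Lambda_n^{x} \otimes \Lambda_n^{y}$, whose support lies in the causal relation by the monotonicity assumption on $\Lambda_n$; on the diagonal part I would use the coincident lift $x \mapsto (z \mapsto (z,z))_\sharp \Lambda_n^{x}$, which is trivially causal. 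This produces a causal $\pi_n \in \Pi_\leq(\mu_0^n, \mu_1^n)$, so that the $\NC^e(N)$ condition on $(H_n, G_n, \mm_n)$ yields $\nu_n \in \OptGeo^{G_n}(\mu_0^n, \mu_1^n)$ satisfying the entropy-power concavity along $\mu_t^n := (e_t)_\sharp \nu_n$.

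Tightness of $(\nu_n)_n$ follows by combining tightness of the marginals $\mu_i^n$ (a consequence of their weak convergence) with the compactness hypothesis (2) applied to compact sets containing, up to small mass, the supports of the $\mu_i^n$. Extracting a weakly convergent subsequence $\nu_{n_k} \weak \nu_\infty$ in $\Prob(C([0,1];X))$, continuity of the evaluation maps gives $\mu_t^{n_k} \weak \mu_t := (e_t)_\sharp \nu_\infty$, and in particular $(e_i)_\sharp \nu_\infty = \mu_i$ for $i = 0, 1$. To transfer the $G_\infty$-causality to the limit, I would consider the closed set $K := \text{K-}\limsup_{k} C_{G_{n_k}}$ in $C([0,1];X)$: a tightness argument shows that, for every $\epsilon > 0$, $C_{G_{n_k}}$ is eventually contained in an $\epsilon$-neighborhood of $K$ on a common compact set of $\nu_{n_k}$-mass at least $1 - \epsilon$, so the Portmanteau theorem yields $\nu_\infty(K) = 1$; hypothesis (3) then gives $\nu_\infty(C_{G_\infty}) = 1$ and hence $\nu_\infty \in \OptGeo^{G_\infty}(\mu_0, \mu_1)$.

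Finally, for the entropy-power concavity at the limit, one applies the joint lower semicontinuity of the Boltzmann--Shannon entropy under weak convergence of both the reference and the variable measure (here $\mu_t^{n_k} \weak \mu_t$ and $\mm_{n_k} \weak \mm_\infty$), obtaining $\U_{N-1}(\mu_t | \mm_\infty) \geq \limsup_k \U_{N-1}(\mu_t^{n_k} | \mm_{n_k})$. Combining the $\NC^e(N)$ inequality on $H_{n_k}$ with the entropy-contraction bound at the endpoints then yields the desired concavity for $(H_\infty, G_\infty, \mm_\infty)$. The main obstacle is expected to be the transfer of $G_\infty$-causality to $\nu_\infty$: since each $C_{G_n}$ need not be closed in $C([0,1]; X)$ (the gauge $G_n$ is undefined at the endpoints of the null generators, and affineness of the post-composition generally fails to pass to uniform limits absent compactness), the argument depends crucially on the interplay between (2) and (3); the push-up property enters to control the interaction between the strict and non-strict causal relations, both ensuring that the lifted couplings $\pi_n$ remain causal and that the limit plan $\nu_\infty$ inherits a genuinely $G_\infty$-causal structure.
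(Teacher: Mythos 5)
Your proposal follows the same strategy as the paper's proof (lift the data via $\Lambda_n$, invoke $\NC^e(N)$ on each $H_n$, establish tightness of the dynamical plans via hypothesis (2), extract a weak limit by Prokhorov, transfer $G_\infty$-causality via the Kuratowski-limsup hypothesis (3) and a Portmanteau-type argument, and conclude by joint lower semicontinuity of the entropy), and the core steps are correct. One place where you are actually a bit more careful than the paper: the paper lifts the causal coupling $\hat\pi_\infty$ by the independent product $(x,y)\mapsto(P_2)_\sharp\Lambda_{n,x}\otimes(P_2)_\sharp\Lambda_{n,y}$ uniformly over $J$ and asserts this remains causal whenever $x\leq y$; for $x<y$ this is immediate from monotonicity of $\Lambda_n$, but for $x=y$ the definition of monotone set imposes no constraint, and the vertical slice over a fixed $x$ of a monotone set need not be totally ordered, so $(P_2)_\sharp\Lambda_{n,x}\otimes(P_2)_\sharp\Lambda_{n,x}$ is not obviously concentrated on $J$. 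Your split of $\pi_\infty$ into diagonal and off-diagonal parts, with the coincident lift $(z\mapsto(z,z))_\sharp\Lambda_n^{x}$ on the diagonal, closes this gap cleanly while yielding the same marginals; it is a legitimate (slightly cleaner) execution of the identical construction. One small inaccuracy in your closing remark: the push-up property plays no role in making the lifted couplings $\pi_n$ causal --- that is entirely monotonicity of $\Lambda_n$ plus reflexivity of $\leq$ on the diagonal. Push-up is used only once, to certify that a causal curve with both endpoints in the closed achronal set $H_\infty$ is a curve lying in $H_\infty$, so that the weak limit $\nu_\infty$ is concentrated on the right hypersurface before hypothesis (3) is invoked.
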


\begin{remark}
  Assumptions~(2) and~(3) are trivially gauge-invariant, because they
  depend only on the family of $G$-causal functions (and the very
  definition of equivalence between gauges is the coincidence of
  $G$-causal functions).
\end{remark}

\begin{proof}
  Let $\mu_{i}^\infty\in\Prob(H_\infty)$, $i=0,1$, be two probability
  measures, transported one in the other by a plan
  $\hat\pi_\infty\in \Pi_{\leq}(\mu_0^\infty,\mu_1^\infty)$.
  Notice that, if $\mu_{i}^\infty$ is not absolutely continuous
  w.r.t.\ $\mm_\infty$, then the concavity inequality of $\NC^e(N)$ 
  trivializes; therefore we can assume that $\mu_{i}^\infty\ll\mm_\infty$.
\smallskip

\textbf{Step 1}. Definition of the approximations $\mu_{i}^n\in \Prob(H_{n})$, $i=1,2$.  
\\  We disintegrate $\Lambda_{n}$, in the following way
\begin{equation}
  \label{eq:disintegration-Lambda}
    \Lambda_{n}
    =
    \int_{H_\infty}
    \Lambda_{n,x}
    \,
    \mm_\infty(\de x)
    ,
  \end{equation}
  where $\Lambda_{n,x}$ is a probability measure concentrated on
  $\{x\}\times H_{n}$.
  Notice that, by monotonicity of $\Lambda_{n}$, if $x\leq y$,
  then 
  \begin{equation}\label{eq:monotonicity-Lambda}
       (P_2)_{\sharp}\Lambda_{n,x}
     \otimes
      (P_2)_{\sharp}\Lambda_{n,y}
      (J)
      =1
      .
  \end{equation}
  We  push the measures
  $\mu_{i}^\infty$ and $\hat\pi_\infty$, as follows
  \begin{equation}
    \label{eq:defn-mu-n-pi-n}
    \begin{split}
    \mu_{i}^n
    &:=
    \int_{H_\infty}
    (P_2)_{\sharp}\Lambda_{n,x}
    \,
    \mu_{i}^\infty(\de x)
    ,\\
    \hat\pi_{n}&
    :=
    \int_{H_\infty\times H_\infty}
    ((P_2)_{\sharp}\Lambda_{n,x})
    \otimes
    ((P_2)_{\sharp}\Lambda_{n,y})
    \,
    \hat\pi_\infty(\de x\,\de y)
    ,
    \end{split}
  \end{equation}
  where $P_2$ is the projection on the second variable.
  We now check that $\hat\pi_{n}$ is a transport plan between $\mu_0^n$ and
  $\mu_1^n$:
  \begin{align*}
    (P_1)_{\sharp}
    \hat\pi_{n}
    &
      =
    (P_1)_{\sharp}
    \int_{H_\infty\times H_\infty}
    ((P_2)_{\sharp}\Lambda_{n,x})
    \otimes
    ((P_2)_{\sharp}\Lambda_{n,y})
    \,
      \hat\pi_\infty(\de x\,\de y)
    \\
    &
      =
    \int_{H_\infty\times H_\infty}
    (P_1)_{\sharp}
      (
      ((P_2)_{\sharp}\Lambda_{n,x})
    \otimes
      ((P_2)_{\sharp}\Lambda_{n,y})
      )
    \,
    \hat\pi_\infty(\de x\,\de y)
    \\
    &
      =
    \int_{H_\infty\times H_\infty}
      (P_2)_{\sharp}\Lambda_{n,x}
    \,
    \hat\pi_\infty(\de x\,\de y)
      =
    \int_{H_\infty\times H_\infty}
      (P_2)_{\sharp}\Lambda_{n,x}
    \,
      \mu_0^\infty(\de x)
      =
      \mu_0^n
      ,
  \end{align*}
  and analogously one can prove the same for the other projection.
  The next step is to prove that $\hat\pi_{n}(J)=1$:
  \begin{align*}
    \hat\pi_{n}(J)
    &
    =
    \int_{H_\infty\times H_\infty}
    ((P_2)_{\sharp}\Lambda_{n,x})
    \otimes
    ((P_2)_{\sharp}\Lambda_{n,y})
    (J)
    \,
    \hat\pi_\infty(\de x\,\de y)
    \\
    &
    =
    \int_{H_\infty\times H_\infty
    \cap J}
    ((P_2)_{\sharp}\Lambda_{n,x})
    \otimes
    ((P_2)_{\sharp}\Lambda_{n,y})
    (J)
    \,
    \hat\pi_\infty(\de x\,\de y)
    \overset{\text{\eqref{eq:monotonicity-Lambda}}}{=}1
    .
  \end{align*}
\smallskip

\textbf{Step 2}. Proof that $\mu_{i}^n\weak\mu_{i}^\infty$, $i=0,1$.

  We first claim that, up to a subsequence,
\begin{equation}\label{eq:P2LambdatoDelta}
  (P_2)_{\sharp}(\Lambda_{n,x})\weak\delta_{x}, \quad \text{for $\mm_\infty$-a.e.\ $x$.}
  \end{equation}
 Indeed, using assumption~\eqref{eq:convergence-lambda},
  we compute
  \begin{align*}
    0&=
       \int_{X\times X} \sfd\wedge 1
       \;
       \de((\id,\id)_{\sharp}\mm_\infty)
       =
       \lim_{n\to\infty}
       \int_{X\times X} \sfd\wedge 1
       \,
       \de\Lambda_{n}
    \\
    &
       =
       \lim_{n\to\infty}
       \int_{\noending{H}}
       \int_{X\times X} \sfd\wedge 1
       \,
       \de\Lambda_{n,x}
       \,
      \mm_\infty(\de x).
  \end{align*}
  Therefore, the function
  \begin{equation}
    \label{eq:convergence-disintegration}
    x\mapsto
       \int_{X\times X} \sfd\wedge 1
       \,
       \de\Lambda_{n,x}
       =
       \int_{X}
       \sfd(x,y)
       \wedge 1
       \,
       ((P_2)_{\sharp}\Lambda_{n,x})(\de y)
  \end{equation}
  converges in $L^1(\mm_\infty)$ to $0$ (in the last equality we used
  the fact that $\Lambda_{n,x}$ is concentrated on $\{x\}\times H_{n}$).
  Up to a subsequence, it also converges $\mm_\infty$-a.e.\;to $0$,  proving the claim~\eqref{eq:P2LambdatoDelta}.

  Using the claim~\eqref{eq:P2LambdatoDelta}, we next show that $\mu_{i}^n\weak\mu_{i}^\infty$, $i=0,1$. Since the  weak convergence is  metrizable, it is enough to show that any subsequence of $\mu_{i}^n$ converges to $\mu_{i}^\infty$.
  To this aim,  fix $\phi\in C_{b}(X)$ and compute, using~\eqref{eq:P2LambdatoDelta} and
  dominated convergence theorem:
  \begin{align*}
    \lim_{n\to\infty}
    \int_{X}
    \phi(x)
    \,
    \de\mu_{i}^n(\de x)
    &
    =
    \lim_{n\to\infty}
    \int_{\noending{H}}
    \int_{X}
    \phi(y)
    \,
    (P_2)_{\sharp}(\Lambda_{n,x})(\de y)
    \mu_{i}^\infty(\de x)
    \\
    &
   =
    \int_{\noending{H}} \left(
    \lim_{n\to\infty}
    \int_{X}
    \phi(y)
    \,
    (P_2)_{\sharp}(\Lambda_{n,x})(\de y) \right)
      \mu_{i}^\infty(\de x)
    \\
    \overset{\eqref{eq:P2LambdatoDelta}}&{=}
    \int_{\noending{H}}
    \phi(x)\,
    \mu_{i}^\infty(\de x)
    ,
  \end{align*}
  yielding that $\mu_{i}^n\weak\mu_{i}^\infty$, $i=0,1$.

  Arguing as in  of~\cite[Lemma~4.19]{sturm:I}, since 
  $\mu_{i}^n=(\Lambda_{n})_{\sharp}\mu_{i}^\infty$
  and 
  $\mm_{n}=(\Lambda_{n})_{\sharp}\mm_\infty$, we deduce that
  \begin{equation}
    \label{eq:entropy-decreases-push-forward}
    \Ent(\mu_{i}^n|\mm_{n})
    \leq
    \Ent(\mu_{i}^\infty|\mm_\infty)
    .
  \end{equation}

\textbf{Step 3}. Weak  convergence of the  $G_{n}$-causal
dynamical optimal transport plans.
\\  Let now $\nu_{n}\in\OptGeo^{G_{n}}(\mu_0^n,\mu_1^n)$ be the $G_{n}$-causal
dynamical optimal transport plan given by the definition of
$\NC^e(N)$,
 i.e., (recall~\eqref{eq:defSN}, for the definition of $\U_{N}$)
  \begin{equation}
    \label{eq:concavity-entropy-in-stability-proof}
    \U_{N-1}((e_{t})_{\sharp}\nu_{n}|\mm_{n})
    \geq
    (1-t)
    \U_{N-1}((e_0)_{\sharp}\nu_{n}|\mm_{n})
    +
    t
    \U_{N-1}((e_1)_{\sharp}\nu_{n}|\mm_{n})
    .
  \end{equation}
  We show that the sequence $\nu_{n}$ is tight.
  Fix $\epsilon>0$.
  By tightness of $(\mu_{i}^n)_{n}$, there exists a compact set
  $K\subset X$ such that $\mu_{i}^\infty(K)\geq 1-\epsilon$,
  $i=0,1$, for all $n\in \N\cap\{\infty\}$.
  We deduce that
  $\nu_{n}(C_{K})\geq 1-2\epsilon$, where
  $C_{K}\subset C([0,1]; X)$ is the family set
  of $G_{n}$-causal curves with endpoints in $K$.
  By~\eqref{eq:compactness-gauges}, we have that $C_{K}$ is 
    compact, therefore the family $(\nu_{n})_{n}$ is tight.
    By Prokhorov's Theorem, up to taking
    a subsequence, it holds 
    that $\nu_{n}\weak\nu_\infty$, for some probability
    measure $\nu_\infty$.

    We claim that $\nu_\infty$ is
  concentrated on $G_\infty$-causal curves in $H_\infty$.
  Since $(e_0,e_1)_{\sharp}\nu_\infty\in\Pi_\leq(\mu_0^\infty,\mu_1^\infty)$, then $\nu_\infty$ is concentrated on causal curves whose end-points belong to $H_\infty$.
  Since $H_\infty$ is  a closed achronal set, 
  by the push-up property, any causal curve with endpoints in $H_\infty$ 
  lays in $H_\infty$, therefore $\nu_\infty$ is concentrated 
  on causal curves in $H_\infty$.
  Making use of Lemma~\ref{lem:kuratowski-weak} below, we can thus compute
\begin{align*}
    1=
    \limsup_{n\to\infty}
  \nu_{n}(C_{G_{n}})
  \overset{\eqref{eq:kuratowski-weak}}{\leq}
    \nu_\infty\Big(  \text{\rm K-}\limsup_{n\to\infty}
C_{G_{n}}\Big)
    \overset{\eqref{eq:kuratowski-gauges}}{\leq}
    \nu_\infty(C_{G_\infty})
    \leq
    1
    .
\end{align*}

  Using the joint lower semicontinuity of the entropy under weak convergence of probability measures~\cite[Theorem 29.20]{villani:oldandnew}, we deduce that
  \begin{align}
    \label{eq:convergence-sturm-middle-points}
    \liminf_{n\to\infty}
    \Ent((e_{t})_{\#}(\nu_{n})|\mm_{n})
    \geq
    \Ent((e_{t})_{\#}(\nu_\infty)|\mm_\infty).
  \end{align}
    We can thus compute
    \begin{equation*}
      \begin{aligned}
        \U_{N-1} &((e_{t})_{\sharp}\nu_\infty|\mm_\infty)
        \overset{\eqref{eq:convergence-sturm-middle-points}}{\geq}
        \limsup_{n\to\infty}
        \U_{N-1}((e_{t})_{\sharp}\nu_{n}|\mm_\infty)
        \\
        & \quad
          \overset{\eqref{eq:concavity-entropy-in-stability-proof}}{\geq}
          \limsup_{n\to\infty}
          \big(
          (1-t)
          \U_{N-1}((e_0)_{\sharp}\nu_{n}|\mm_{n})
          +
          t
          \U_{N-1}((e_1)_{\sharp}\nu_{n}|\mm_{n})
          \big)
        \\
        & \quad
          \overset{\eqref{eq:entropy-decreases-push-forward}}{\geq}
          (1-t)
          \U_{N-1}((e_0)_{\sharp}\nu_\infty|\mm_\infty)
          +
          t
          \U_{N-1}((e_1)_{\sharp}\nu_\infty|\mm_\infty)
          .
      \end{aligned}
    \end{equation*}
    deducing that $\nu_\infty$ enjoys concavity for the entropy, thus
    $(H_{\infty},G_\infty,\mm_\infty)$ is $\NC^e(N)$.  
\end{proof}

The next two technical lemmas (included for the
reader's convenience) recall two elementary facts linking the
Kuratowski convergence of sets and the weak convergence of probability
measures.

\begin{lemma}\label{lem:kuratovski-characterization}
  Let $(X,\sfd)$ be a metric space.
  Let $(C_{n})_{n}$ be a sequence of subsets of $X$.
  Assume that $\text{\rm K-}\limsup_{n\to\infty} C_{n}\subset C$, for
  some subset $C\subset X$.

  Then, for any compact subset $K\subset X$, the following holds: for every 
  $\epsilon>0$ there exists $k\in\N$  such that
    \begin{equation}
      C_{n}\cap K
      \subset C^{\epsilon}, \quad \text{for all } n>k.
    \end{equation}
\end{lemma}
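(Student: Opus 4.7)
The plan is to argue by contradiction via a diagonal/subsequence extraction, exploiting the compactness of $K$ together with the very definition of Kuratowski-lim-sup given in~\eqref{eq:kuratowski-definition}.

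Concretely, I would proceed as follows. Suppose the conclusion fails: then there exist $\epsilon > 0$, a subsequence $(n_k)_k$ and points $x_{n_k} \in C_{n_k} \cap K$ with $x_{n_k} \notin C^\epsilon$, i.e., $\sfd(x_{n_k}, C) \geq \epsilon$ for every $k$. Since $K$ is compact, up to a further subsequence (which I still denote $(n_k)$) we have $x_{n_k} \to x_\infty \in K$.

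By~\eqref{eq:kuratowski-definition}, the point $x_\infty$ belongs to $\text{K-}\limsup_{n\to\infty} C_n$, and hence, by the standing assumption, $x_\infty \in C$. On the other hand, the lower semicontinuity of $y \mapsto \sfd(y, C)$ gives
\begin{equation*}
  \sfd(x_\infty, C)
  \leq
  \liminf_{k\to\infty} \sfd(x_{n_k}, C)
  \quad\text{(actually equality by continuity),}
\end{equation*}
so $\sfd(x_\infty, C) \geq \epsilon > 0$, contradicting $x_\infty \in C$.

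This yields the claim. The argument is essentially routine once one unpacks the definition of K-lim-sup; the only place where the hypotheses are used nontrivially is the compactness of $K$, which is what allows the extraction of the convergent subsequence producing a candidate element of the Kuratowski limsup. No step here appears to present any substantial obstacle.
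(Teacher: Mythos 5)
Your proposal is correct and follows essentially the same argument as the paper: contradiction, extraction of a convergent subsequence from the compact set $K$, identification of the limit as an element of the Kuratowski limsup (hence of $C$), and contradiction with the uniform lower bound on the distance to $C$.
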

\begin{proof}
  Assume on the contrary that there exists $\epsilon_0>0$ such that, up
  to passing to a unrelabeled subsequence, there exists a sequence
  $x_{n}\in C_{n}\cap K$, such that $\dist(x_{n},C)>\epsilon_0$.
  Up to a further  subsequence, using the compactness of
  $K$, we have that $x_{n}\to x$, for some $x\in K$.
  Therefore, by definition of Kuratowski
  convergence~\eqref{eq:kuratowski-definition}, we infer that
  $x\in\text{\rm K-}\limsup_{n\to\infty} C_{n}\subset  C$. This contradicts that
   $\dist(x, C)\geq\epsilon_0$.
\end{proof}

\begin{lemma}\label{lem:kuratowski-weak}
  Let $(X,\sfd)$ be a Polish space.
  Let $(\mu_{n})_{n}$ be a sequence of probability
  measures, weakly converging to $\mu$.
  Let $(C_{n})_{n}$ be a sequence of subsets of $X$.
  Then
  \begin{equation}
    \label{eq:kuratowski-weak}
    \limsup_{n\to\infty}
    \mu_{n}(C_{n})
    \leq
    \mu\Big(
    \text{\rm K-}\limsup_{n\to\infty} C_{n}
    \Big)
    .
  \end{equation}
\end{lemma}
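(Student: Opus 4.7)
The strategy is to reduce this statement to a combination of the previous lemma (\Cref{lem:kuratovski-characterization}) and the closed-set part of the Portmanteau theorem, using Prokhorov's theorem to localize the estimate on a compact subset. Set $C := \text{K-}\limsup_{n\to\infty} C_n$, and observe that $C$ is automatically closed, as it is the Kuratowski upper limit of a sequence of sets. Since the measures $\mu_n$ converge weakly on a Polish space, Prokhorov's theorem gives tightness of the family $(\mu_n)_n\cup\{\mu\}$: for every $\varepsilon>0$ there is a compact $K\subset X$ with $\mu_n(X\setminus K)<\varepsilon$ for all $n$.

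The first step would then be to split $\mu_n(C_n)\leq \mu_n(C_n\cap K)+\varepsilon$. The second step is to invoke \Cref{lem:kuratovski-characterization}: for each $\delta>0$ there exists $k=k(\delta)$ such that $C_n\cap K\subset C^{\delta}$ for all $n\geq k$, where $C^\delta:=\{x:\sfd(x,C)<\delta\}$. Letting $\overline{C^\delta}$ denote its closure (contained in the closed $\delta$-neighborhood of $C$), I would then apply the Portmanteau theorem in the form $\limsup_n \mu_n(F)\leq \mu(F)$ for closed $F$, to get
\begin{equation*}
  \limsup_{n\to\infty}\mu_n(C_n\cap K)
  \leq \limsup_{n\to\infty}\mu_n(\overline{C^\delta})
  \leq \mu(\overline{C^\delta}).
\end{equation*}

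The final step is to pass to the limit in $\delta$ and $\varepsilon$. Since $C$ is closed, the closed $\delta$-neighborhoods decrease to $C$ as $\delta\downarrow 0$, so by monotone continuity of $\mu$ from above (using that $\mu$ is a finite measure), $\mu(\overline{C^\delta})\to \mu(C)$ as $\delta\to 0^+$. Combining the estimates yields $\limsup_n \mu_n(C_n)\leq \mu(C)+\varepsilon$, and letting $\varepsilon\to 0$ concludes.

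There is no real obstacle here: the argument is a standard repackaging of Portmanteau plus tightness, with the only subtlety being to ensure that the neighborhood of $C$ chosen is closed (so Portmanteau applies directly) and shrinks down to $C$ exactly (which uses closedness of the Kuratowski upper limit). The lemma preceding this one does all the combinatorial work of controlling $C_n\cap K$ inside $C^\delta$, so the present lemma is essentially a one-step consequence of it together with weak convergence.
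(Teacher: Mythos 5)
Your proof is correct and follows essentially the same route as the paper's: both reduce to \Cref{lem:kuratovski-characterization} combined with tightness (Prokhorov) and the closed-set Portmanteau inequality, then shrink the neighborhood of $C := \text{K-}\limsup_n C_n$ using its closedness. The only cosmetic difference is that you keep the tightness parameter $\varepsilon$ and the neighborhood radius $\delta$ separate and invoke monotone continuity of $\mu$ on $\overline{C^\delta}\downarrow C$, whereas the paper takes $\delta=\varepsilon$ and passes through $\mu(C^{2\varepsilon})$ before letting $\varepsilon\to 0$.
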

\begin{proof}
  Define $C:=\text{\rm K-}\limsup_{n\to\infty} C_{n}$.
  Fix $\epsilon>0$.
  By tightness, there exists a compact set $K\subset X$, such that
  $\mu_{n}(K)\geq 1-\epsilon$, for all $n$.
  From \Cref{lem:kuratovski-characterization}, we deduce that $C_{n}\cap K\subset C^\epsilon$, for all
  $n$ large enough.
  It follows that
  \begin{align*}
    \limsup_{n\to\infty}
    \mu_{n}(C_{n})
    &
    \leq
    \limsup_{n\to\infty}
    \mu_{n}(C_{n}\cap K)
    +\epsilon
    \leq
    \limsup_{n\to\infty}
    \mu_{n}(C^{\epsilon})
    +\epsilon
    \\
    &
    \leq
    \mu(\overline{C^{\epsilon}})
    +\epsilon
    \leq
    \mu(C^{2\epsilon})
    +\epsilon
    ,
  \end{align*}
  having used the upper semicontinuity of measures of a closed set
  under weak convergence.
  By arbitrariness of $\epsilon>0$, since $C$ is closed, we conclude.
\end{proof}

On a closed achronal set $H$ endowed with a gauge $G$, we define the set 
of $(G,\epsilon)$-causal curves as
\begin{align*}
  C_{G,\epsilon}:=\bigg\{
  &
    \gamma:[0,1]\to H
    \text{ causal: }
  \gamma_{r}\in \noending{H},
  \forall r\in(0,1)
    \text{ and }
  \\
  & \; \bigg|
    \frac{G(\gamma_{t})
    -G(\gamma_{s})}{t-s}
    -
%    \frac{
    \lim_{u\to 0^{+}}(G(\gamma_{1-u})-G(\gamma_{u}))
%    }{
%   1-2\epsilon}
    \bigg|
    \leq \epsilon
    \lim_{u\to 0^{+}}(G(\gamma_{1-u})-G(\gamma_{u}))
    ,
    \\
    &
   \;\ \qquad
        \forall t,s\in(0,1)
    \colon t<s
    \bigg\}
    .
\end{align*}
%
%Such set will be denoted by $C_{G,\epsilon}$.

We stress out that the limit in the definition above 
exists by monotonicity.
Moreover, the definition is invariant by an affine transformation 
of $G$ (cf.\ \Cref{P:characterization-equivalence}).  

\smallskip

The next result shall be seen as a  null counterpart
   of Lott and
 Villani's~\cite{lottvillani, villani:oldandnew} stability of the $\CD(K,N)$ condition under pointed measured Gromov{--}Hausdorff convergence of pointed metric measure spaces. More precisely, the next theorem establishes a stability result for the $\NC^e(N)$ condition under a suitable pointed measured convergence of (possibly) non-compact synthetic null hypersurfaces endowed with a $\sigma$-finite measure (note that, in the previous stability result, Theorem~\ref{thm:StabNCProb}, the reference measures $\mm_{n}$ were probabilities).  
In order to handle convergence in such a higher generality, it is convenient to consider \emph{pointed}  synthetic null hypersurfaces  $(H,G,\mm, \star)$, where $\star\in H$ is a marked point.

\begin{theorem}\label{thm:StabNCePointed}
Let $N>1$.
  Let $(X_{n},\leq_{n},\ll_{n}, \mathfrak{T}_{n})$, $n\in\N\cup\{\infty\}$
  be a sequence of topological causal spaces.
  Let $(H_{n},G_{n},\mm_{n}, \star_{n})$, $n\in\N\cup\{\infty\}$ be a pointed
  synthetic null hypersurface in $(X_{n},\leq_{n},\ll_{n},\mathfrak{T}_{n})$,
  $n\in\N\cup\{\infty\}$.
  Assume the following.
  \begin{enumerate}
  \item
    There exist Borel and monotone maps $h_{n}: H_{n}\to H_\infty$
    and $g_{n}:H_\infty\to H_{n}$ and an infinitesimal
    sequence $\epsilon_{n}\downarrow 0$, 
    such that $h_{n}(\star_{n})=\star_\infty$
    and $g_{n}(\star_\infty)=\star_{n}$,
    \begin{align}
      \label{eq:weak-hn}
  &
    (h_{n})_{\sharp}\mm_{n}\weak \mm_\infty,
      \\
      \label{eq:assumption-sigma}
  &
    (g_{n})_{\sharp}(\mm_\infty)=\sigma_{n}\mm_{n},
    \qquad
    \text{
    %for some
    %density functions satisfying
    with \qquad 
    }
    \sigma_{n}\leq 1+ \epsilon_{n},
  \\
  &
    \label{eq:compatibility-g-h}
    h_{n}\circ g_{n}(x)
    \to x
    ,
    \qquad
    \text{ for $\mm_\infty$-a.e.\ }
    x\in H_\infty
    .
\end{align}
Moreover $h_{n}$ transforms, by post-composition, $G_{n}$-causal
      curves into $(G_\infty,\epsilon_{n})$-causal
      curves; i.e., 
      \begin{equation}\label{eq:hnGnCaus}
      h_{n}\circ\gamma\in C_{G_\infty,
      \epsilon_{n}},\quad  \text{for all }\gamma\in C_{G_{n}}.
      \end{equation}
\item For every precompact set $K\subset \noending{H}_\infty$ 
and for every $\epsilon>0$,
it holds that the set
\begin{equation}
  \begin{aligned}
    \label{eq:uniform-compactness}
    &\{
    \gamma\in C_{G_\infty,\epsilon}
    \colon
    \gamma_{i}\in K, i=0,1
    \}
    \text{ is precompact}
    .
    \end{aligned}
\end{equation}
\item It holds that\footnote{Since
    $C_{G_\infty,\epsilon}$ is a decreasing sequence of sets, the
    assumption~\eqref{eq:kuratowski-G-epsilon} is equivalent to
    require that
    $C_{G_\infty}= \bigcap_{\epsilon>0}C_{G_\infty,\epsilon}
    =\text{K-}\lim_{\epsilon\to 0} C_{G_\infty,\epsilon}$.  }
\begin{equation}\label{eq:kuratowski-G-epsilon}
    \text{\rm K-}\limsup_{\epsilon\to 0}
    C_{G_\infty,\epsilon}
    \subset
    C_{G_\infty}
    .
\end{equation}
  \end{enumerate}

    If the synthetic null hypersurfaces $(H_{n},G_{n},\mm_{n})$ satisfy the $\NC^e(N)$
    condition, for all $n\in \N$, then also
    $(H_\infty,G_\infty,\mm_\infty)$ satisfies the $\NC^{e}(N)$
    condition, as well.
\end{theorem}

\begin{remark}
    Assumptions~(2) and~(3), as well as assumption~\eqref{eq:hnGnCaus}, are trivially gauge-invariant, because they
  depend only on the family of $G$-causal curves (and the very
  definition of equivalence between gauges implies the coincidence of
  $G$-causal curves).
\end{remark}

\begin{remark} Recently, inspired by the measured Gromov{--}Hausdorff convergence in positive signature,  several notions of convergence for smooth and non-smooth spacetimes appeared in the literature, see e.g.,~\cite{CaMo:20, MiSu:2024, Muller, ByMiSu:2025, SaSo:2025, MoSae:2025}. 
It is an interesting problem, that we do not address here due to length constraint, to compare these with the convergence used in the stability results.
\end{remark}

\begin{proof}

  Let $\mu^\infty_{i}\in \Prob(H_\infty)$, $i=0,1$, be such that $\Pi_\leq(\mu_0^\infty,\mu_1^\infty)\neq\emptyset$. Notice that, if $\mu_{i}^\infty$ is not absolutely continuous
  w.r.t.\ $\mm_\infty$, then the concavity inequality of $\NC^e(N)$ 
  trivializes; therefore we can assume that
  \begin{equation}
    \label{eq:density-mu-infty}
    \mu_{i}^\infty=\rho_{i,\infty}\mm_\infty\ll\mm_\infty
    .
  \end{equation}
  Let
  \begin{equation}
    \label{eq:density-mu}
    \mu^n_{i}:=
    (g_{n})_{\sharp}\mu^\infty_{i}
    =
    \rho_{i,n}
    (g_{n})_{\sharp}
    \mm_\infty
    =
    \rho_{i,n}
    \sigma_{n}
    \mm_{n}
    ,
%    =
%    \rho_{i}\circ g_{n}^{-1}
%    \sigma_{n}
%    \mm_{n}
  \end{equation}
  for some function $\rho_{i,n}$, $n\in \N$.
  Let $\nu_{n}\in\OptGeo^{G_{n}}(\mu_0^n,\mu_1^n)$ be a $G_{n}$-causal
  dynamical transport, given by the definition of $\NC^e(N)$,
 i.e., (recall~\eqref{eq:defSN}, for the definition of $\U_{N}$)
  \begin{equation}
    \label{eq:concavity-entropy-in-stability-novel-proof}
    \U_{N-1}((e_{t})_{\sharp}\nu_{n}|\mm_{n})
    \geq
    (1-t)
    \U_{N-1}((e_0)_{\sharp}\nu_{n}|\mm_{n})
    +
    t
    \U_{N-1}((e_1)_{\sharp}\nu_{n}|\mm_{n})
    .
  \end{equation}  

  \noindent
  {\bf Step 1.} Convergence at $t=0,1$.

  \noindent
Using that $\Ent(\cdot\mid\cdot)$ decreases after push-forward on both
entries (see for instance~\cite[Th.~29.20~(ii)]{villani:oldandnew}), we can estimate
the entropy at the extremals of the interval,
   as follows (assumption~\eqref{eq:assumption-sigma} has been taken
   into account)
  \begin{align*}
    \Ent(\mu_{i}^\infty|\mm_\infty)
    &
    \geq
    \Ent(\mu_{i}^n|(g_{n})_{\sharp}\mm_\infty)
    =
    \int_{H_{n}}
    \log(\rho_{i,n})
    \,
    \de \mu_{i}^n
    \\
    &
    =
    \int_{H_{n}}
    \log(\rho_{i,n}\sigma_{n})
    \,
    \de \mu_{i}^n
    -
    \int_{H_{n}}
    \log{\sigma_{n}}
    \,
    \de \mu_{i}^{n}
    \\
    \overset{\eqref{eq:assumption-sigma}}&{\geq}
    \int_{H_{n}}
    \log(\rho_{i,n}\sigma_{n})
    \,
    \de \mu_{i}^n
    -
    \int_{H_{n}}
    \log(1+\epsilon_{n})
    \,
    \de \mu_{i}^{n}
    \\
    \overset{\eqref{eq:density-mu}}&{=}
    \Ent(\mu_{i}^n|\mm_{n})
    -
    \int_{H_{n}}
    \log(1+\epsilon_{n})
    \,
    \de \mu_{i}^{n}, \quad i=0,1.
 \end{align*}
  Since by assumption $\epsilon_{n}\downarrow 0$, we deduce that
  \begin{equation}\label{eq:limsupEntnueta}
    \liminf_{n\to\infty}
      \U_{N-1}((e_{i})_{\sharp}\nu_{n}|\mm_{n})
      \geq
      \U_{N-1}((e_{i})_{\sharp}\nu_{n}|\mm_{n})
    ,
    \qquad
    i=0,1
    .
  \end{equation}

    We next prove that $(h_{n})_{\sharp}\mu_{i}^n\weak \mu_{i}^\infty$.
    Fix $\phi\in C_{b}(H_\infty)$ and compute, using dominated
    convergence theorem
    \begin{align*}
        \int_{H_{n}}
        \phi
        \,
        \de((h_{n})_{\sharp}\mu_{i}^n)
        &
        =
         \int_{H_{n}}
        \phi
        \,
        \de((h_{n}\circ g_{n})_{\sharp}\mu_{i}^\infty)
        =
        \int_{H_{n}}
        \phi(h_{n}\circ g_{n}(x))
        \,
        \mu_{i}^\infty(\de x)
        \\
        &
        % \overset{\eqref{eq:density-mu-infty}}{=}
          =
         \int_{H_{n}}
        \phi(h_{n}\circ g_{n}(x))
        \, \rho_{i,\infty}(x)\,
        \mm_\infty(\de x) \\
        &
          \qquad
          \overset{\eqref{eq:compatibility-g-h}}{\to}
           \int_{H_{n}}
        \phi(x) \, \rho_{i,\infty}(x)\,
        \mm_\infty(\de x)      
        .
       \end{align*}

  \noindent
  {\bf Step 2.} Construction of the dynamical transport plan.

  \noindent
Let $\hat h_{n}: C_{G_{n}}\to C_{G_\infty, \epsilon_{n}}$
denote the post-composition by $h_{n}$,
(i.e.,
$\hat{h}_{n}(\gamma)=h_{n}\circ \gamma$)
  and define
$\eta_{n}:= (\hat h_{n})_{\sharp}\nu_{n}$.
    We claim that the sequence $(\eta_{n})_{n}$ is tight.
    Up to taking a subsequence, we can assume $\epsilon_{n}$ to be
    decreasing, therefore, by hypothesis, $\eta_{n}$ is concentrated on $C_{G_\infty,\epsilon_{m}}$, if $n\geq m$.
    Since $(h_{n})_{\sharp}\mu_{i}^n\weak \mu_{i}^\infty$, then the family $\{(h_{n})_{\sharp}\mu_{i}^n\}_{i=0,1, n\in \N}$ is tight. For every $\delta>0$, let $K_\delta \subset H_\infty$ be a compact set, such that 
    \[
    (e_{i})_{\sharp}\eta_{n}(K_\delta)=(h_{n})_{\sharp}\mu_{i}^n(K)\geq
    1-\delta
    , \quad
    i=0,1,\quad  \text{for all }n\in \N.
    \]
    Define 
    \[
    C^{K_{\delta}}_{G_\infty,\epsilon_{m}}
    :=
    \{
    \gamma\in C_{G_\infty,\epsilon_{m}}
    \colon
    \gamma_{i}\in K_\delta,\; i=0,1
    \}
    .
    \]
    By hypothesis~\eqref{eq:uniform-compactness},
    $C^{K_{\delta}}_{G_\infty,\epsilon_{m}}$ is precompact and it 
    holds that $\eta_{n}(C^{K_\delta}_{G_\infty,\epsilon_{m}})\geq 1-2\delta$.
    Therefore the family $(\eta_{n})_{n}$ is tight. By Prokhorov's Theorem, up to taking a subsequence, $\eta_{n}\weak\eta_\infty$, for some measure $\eta_\infty$.
    Since $\eta_{n}$ is concentrated on $C_{G_\infty,\epsilon_{m}}$, $n>m$,
    then $\eta_\infty$ is concentrated on 
    $\overline{C_{G_\infty,\epsilon_{m}}}$,
    for all $m$. Therefore, 
    by~\eqref{eq:kuratowski-G-epsilon} we have that 
    $\eta_\infty$ is concentrated on $C_{G_\infty}$.

  \noindent
  {\bf Step 3.} Convergence at $t\in(0,1)$.

  \noindent
Using again~\cite[Thm.\;29.20 (ii)]{villani:oldandnew}, we infer that
    \begin{align*}
      \Ent((e_{t})_{\sharp}\nu_{n}|\mm_{n})
      &\geq
      \Ent((h_{n})_{\sharp}
      (e_{t})_{\sharp}\nu_{n}|
      (h_{n})_{\sharp}\mm_{n})
      \\
      &
        =
      \Ent(
      (e_{t})_{\sharp}
      (\hat h_{n})_{\sharp}
      \nu_{n}|
      (h_{n})_{\sharp}\mm_{n})
      =
      \Ent(
      (e_{t})_{\sharp}
      \eta_{n}|
        (h_{n})_{\sharp}\mm_{n})
        .
    \end{align*}
    Using the joint lower-semicontinuity of the entropy under weak
    convergence and assumption~\eqref{eq:weak-hn}, we deduce that
    \begin{equation}\label{eq:liminfEntnueta}
      \liminf_{n\to\infty}
      \Ent((e_{t})_{\sharp}\nu_{n}|\mm_{n})
      \geq
      \liminf_{n\to\infty}
      \Ent(
      (e_{t})_{\sharp}
      \eta_{n}|
      (h_{n})_{\sharp}\mm_{n})
      \geq
      \Ent(
      (e_{t})_{\sharp}
      \eta_\infty
      |
      \mm_\infty
      )
      .
    \end{equation}
    Combining,~\eqref{eq:limsupEntnueta} and~\eqref{eq:liminfEntnueta},
    we  can pass to the
  limit~\eqref{eq:concavity-entropy-in-stability-novel-proof},
  concluding the proof (see the last steps of the proof of
  \Cref{thm:StabNCProb} for the details).
  \end{proof}

\bibliographystyle{acm}
\bibliography{literature.bib}
\end{document}